\providecommand{\keywords}[1]{\textbf{\textit{Keywords:}} #1}
\newtheorem{df}{Definition}[section]
\newtheorem{lm}[df]{Lemma}%[section]
\newtheorem{lemma}[df]{Lemma}
\newtheorem{prop}[df]{Proposition}%[section]
\newtheorem{thm}[df]{Theorem}%[section]
\newtheorem{cor}[df]{Corollary}%[section]
\newcommand{\Rm}{{\mathbb R}}
\newcommand{\Tm}{{\mathbb T}}
\makeatletter \@addtoreset{equation}{section}
\newcommand{\bes}{\begin{displaymath}}
\newcommand{\ees}{\end{displaymath}}
\newcommand{\be}{\begin{equation}}
\newcommand{\ee}{\end{equation}}
\newcommand{\ba}{\begin{eqnarray}}
\newcommand{\ea}{\end{eqnarray}}
\newcommand{\bas}{\begin{eqnarray*}}
\newcommand{\eas}{\end{eqnarray*}}
\newcommand{\@Bbb}[1]{\ensuremath{\mathbb #1}}
\newcommand{\B}{{\@Bbb B}}
\newcommand{\C}{{\@Bbb C}}
\newcommand{\E}{{\mathbb E}}
\newcommand{\F}{{\@Bbb F}}
\renewcommand{\P}{{\mathbb P}}
\newcommand{\bbP}{{\P}}
\newcommand{\bbE}{{\mathbb E}}
\newcommand{\Q}{{\@Bbb Q}}
\newcommand{\bQ}{{\@Bbb Q}}
\newcommand{\N}{{\@Bbb N}}
\newcommand{\fT}{{\frak T}}
\newcommand{\bbR}{{\@Bbb R}}
\newcommand{\W}{{\@Bbb W}}
\newcommand{\bbZ}{{\@Bbb Z}}
\newcommand{\bbT}{{\@Bbb T}}
\newcommand{\la}{\lambda}
\newcommand{\al}{\alpha}
\newcommand{\si}{\sigma}
\newcommand{\Om}{\Omega}
\newcommand{\om}{\omega}
\newcommand{\eps}{\varepsilon}
\newcommand{\@s}[1]{\ensuremath{\mathcal #1}}
\newcommand{\cA}{\@s A}
\newcommand{\cB}{\@s B}
\newcommand{\cC}{\@s C}
\newcommand{\cD}{\@s D}
\newcommand{\cE}{\@s E}
\newcommand{\cF}{\@s F}
\newcommand{\cG}{\@s G}
\newcommand{\cH}{\@s H}
\newcommand{\cI}{\@s I}
\newcommand{\cJ}{\@s J}
\newcommand{\cK}{\@s K}
\newcommand{\cL}{\@s L}
\newcommand{\cN}{\@s N}
\newcommand{\cM}{\@s M}
\newcommand{\cO}{\@s O}
\newcommand{\cP}{\@s P}
\newcommand{\cR}{\@s R}
\newcommand{\cS}{\@s S}
\newcommand{\cT}{\@s T}
\newcommand{\cV}{\@s V}
\newcommand{\cW}{\@s W}
\newcommand{\cX}{\@s X}
\newcommand{\cY}{\@s Y}
\newcommand{\cZ}{\@s Z}
\newcommand{\@bm}[1]{\ensuremath{\mathbf #1}}
\newcommand{\bma}{\@bm a}
\newcommand{\bmb}{\@bm b}
\newcommand{\bmc}{\@bm c}
\newcommand{\bmd}{\@bm d}
\newcommand{\bme}{\@bm e}
\newcommand{\bmf}{\@bm f}
\newcommand{\bmg}{\@bm g}
\newcommand{\bmh}{\@bm h}
\newcommand{\bmi}{\@bm i}
\newcommand{\bmj}{\@bm j}
\newcommand{\bmk}{\@bm k}
\newcommand{\bml}{\@bm l}
\newcommand{\bmm}{\@bm m}
\newcommand{\bmn}{\@bm n}
\newcommand{\bmo}{\@bm o}
\newcommand{\bmp}{\@bm p}
\newcommand{\bmq}{\@bm q}
\newcommand{\bmr}{\@bm r}
\newcommand{\bms}{\@bm s}
\newcommand{\bmt}{\@bm t}
\newcommand{\bmu}{\@bm u}
\newcommand{\bmw}{\@bm w}
\newcommand{\bmv}{\@bm v}
\newcommand{\bmx}{\@bm x}
\newcommand{\bx}{\@bm x}
\newcommand{\bmy}{\@bm y}
\newcommand{\bz}{\@bm z}
\newcommand{\bbN}{\mathbb N}
\newcommand{\by}{\@bm y}
\newcommand{\bmzero}{\@bm 0}
\newcommand{\ga}{\gamma}
\newcommand{\@g}[1]{\ensuremath{\mathfrak #1}}
\newcommand{\gA}{\@g A}
\newcommand{\gD}{\@g D}
\newcommand{\gJ}{\@g J}
\newcommand{\gF}{\@g F}
\newcommand{\gM}{\@g M}
\newcommand{\gR}{\@g R}
\newcommand{\commentout}[1]{{}}
\newcommand{\proof}{{\bf Proof. }}
\newcommand{\qed}{$\Box$}
\begin{document}

\title
  {Fractional diffusion limit for a kinetic equation with
  an interface\thanks{{\bf Acknowledgement.} TK  acknowledges the support of the National Science Centre under the 
    NCN grant 2016/23/B/ST1/00492. LR  was partially supported by the NSF grant DMS-1613603
    and ONR grant N00014-17-1-2145.
SO was partially supported by ANR-15-CE40-0020-01 grant LSD.
TK expresses his gratitude to Prof. K.~Bogdan for
elucidating conversations on the subject of the paper and to the
Department of Mathematics of the Technical University of Wroc\l aw for its
hospitability.}} % -  probabilistic approach}
%\titlerunning{Superdiffusive Limit with thermostatted interface}

\author{Tomasz Komorowski\thanks{Institute of Mathematics, Polish Academy Of Sciences,
ul. \'{S}niadeckich 8,   00-956 Warsaw, Poland, 
e-mail: {\tt komorow@hektor.umcs.lublin.pl}}
\and
Stefano Olla\thanks{CEREMADE, UMR-CNRS, Universit\'e de Paris Dauphine, PSL Research University
{ Place du Mar\'echal De Lattre De Tassigny, 75016 Paris, France},
e-mail:{ \tt olla@ceremade.dauphine.fr}}
\and
 Lenya Ryzhik\thanks{{ Mathematics Department, Stanford University, Stanford, CA 94305, USA} ,
{email: {\tt ryzhik@stanford.edu}}}}

\maketitle

\begin{abstract}

We consider the limit of  a  linear kinetic equation, with reflection-transmission-absorption at an
interface, with a degenerate scattering kernel. The equation arise from a microscopic chain of oscillators in contact
with a heat bath. In the absence of the interface, the solutions exhibit 
a superdiffusive behavior in the long time limit.  With the interface,  the long time limit is the unique solution of a 
version of the fractional in space heat equation,
with reflection-transmission-absorption at the interface.  
The limit problem corresponds to a certain stable process that is
either absorbed, reflected, or transmitted upon crossing the interface.
\end{abstract}

\keywords{Diffusion Limits from Kinetic Equations, Fractional Laplacian, Stable Processes, Boundary Conditions at Interface}
%\date{\today {\bf File: {\jobname}.tex.}} 

\section{Introduction}

\label{intro}

We consider a linear phonon Boltzmann equation with an
interface. This
equation describes the evolution of the energy density $W(t,y,k)$ of   phonons   at time $t\ge0$,
spatial position $y\in\bbR$  and the frequency 
$k\in\bbT=[-1/2,1/2]$  with identified endpoints.
Outside the interface,  located at $y=0$, the density satisfies the kinetic equation
\begin{equation}
  \label{eq:8}
\begin{aligned} 
 &\partial_tW(t,y,k) + \bar\om'(k) \partial_y W(t,y,k) = \ga_0 L_k W(t,y,k), 
 % \quad (t,y,k)\in\bbR_+\times \bbR_*\times\bbT_*,
 \\
&
W(0,y,k)=W_0(y,k).
\end{aligned}
\end{equation}
% We use the notation
% $
% \bbR_+=(0,+\infty)$, $\bbR_-=(-\infty,0)$,~$\bbR_*=\bbR\setminus\{0\}$,~$\bar\bbR_\pm=[0,\pm\infty)$
% as well as~$\bbT_*=\bbT\setminus\{0\}$,~and $\bbT_\pm=[k:\,0<\pm k<1/2].
% $
%Function $W_0(y,k)$ is the density of the initial
%distribution of phonons in the position and frequency domain.
We denote by $\om:\bbT\to[0,+\infty)$ the dispersion relation, and set
{the group velocity of the phonon}
$
\bar\om'(k):={\om'(k)}/{(2\pi)}$, $k\in\bbT$. 
The parameter $\ga_0>0$ represents the phonon scattering rate, and 
the scattering operator $L_k$, acting only on the $k$-variable,
is given~by 
\begin{equation}
\label{L}
L_kF(k):= {\displaystyle \int_{\bbT}}R(k,k')
\left[F\left(k'\right) - F\left(k\right)\right]dk',\quad k\in\bbT,
\end{equation}
{for a bounded and measurable  function $F$}.
%for $F$ belongs to $ B_b(\bbT)$ - the set  of bounded measurable,  real valued functions.
%Here
%$$
%\bar\om'(k)=\frac{\om'(k)}{2\pi},\quad k\in\bbT,
%$$
%where  {\em the dispersion relation} $ \om:\bbT\to[0,+\infty)$.
%
%

When there is no interface, this is the Kolmogorov equation for a classical jump process.
% It also appears as the macroscopic limit of a system of oscillators driven by a random noise
% that conserves {energy, momentum and volume} \cite{bos}. This microscopic problem has been recently
% considered in ~\cite{kors} with a thermostat at a fixed temperature $T\ge 0$ at the location~$y=0$, 
% so that the phonons may be 
% emitted, reflected or transmitted by thermostat, and the corresponding macroscopic interface conditions have been
% obtained, in the absence of the bulk scattering, corresponding to $\gamma_0=0$ in (\ref{eq:8}). 
The interface conditions  prescribe the outgoing phonon density in terms of what comes to the interface: 
\begin{equation}\label{feb1408}
W(t,0^+, k)=p_-(k)W(t,0^+, -k)+p_+(k)W(t,0^-,k)+T\mathfrak g(k), \hbox{ for $0< k\le 1/2$},
\end{equation}
and
\begin{equation}\label{feb1410}
W(t,0^-, k)=p_-(k)W(t,0^-,-k) + p_+(k)W(t,0^+, k)+T\mathfrak g(k), \quad \hbox{ for $-1/2< k< 0$},
\end{equation}
% - in agreement with the convention \eqref{conv} and condition \eqref{tot} we also let
% \begin{equation}
% \label{023101-19}
% W(t,0^\pm, 0)\equiv T,\quad t\ge0.
% \end{equation}
with the energy balance
\begin{equation}
\label{012304}
p_+(k)+p_-(k)+  \mathfrak g(k)=1. 
\end{equation}
Here, $p_-(k)$, $p_+(k)$ and $\mathfrak g(k)$ are, respectively,
% \sout{the reflection and
% transmission coefficients across the interface,  corresponding to} the
the probabilities of the phonon being reflected, transmitted or absorbed, while
$T\mathfrak g(k)$ is the phonon production rate at the interface. 
% \sout{One can also interpret  $0\le \mathfrak g(k)\le 1$ as the absorption rate of the
% frequency $k$ phonon by the interface}

We  assume  
that the absorption coefficient $ \frak g(k)$ and the reflection-transmission
coefficients~$p_\pm(k)$ are positive, continuous, 
even functions, satisfying \eqref{012304} and  such that
\begin{equation}
\label{non-deg2a}
\lim_{k\to 0+}\frak g(k)= {\frak g_0>0}, \quad \lim_{k\to 0+}p_\pm(k)=p_\pm,
\end{equation}
and
there exist $C_0,\ga>0$ such that
\begin{equation}
\label{non-deg1}
|p_\pm(k)-p_\pm|\le  C_0| k|^{\ga},\quad k\in\bbT.
\end{equation} 
%This equation has been obtained in \cite{bos},
%with no thermostat present, as the Boltzmann-Grad limit of the energy density 
%function for a microscopic model of a heat conductor consisting  of a one dimensional chain of harmonic oscillators, with
%inter-particle scattering conserving the energy and  volume.

The large scale limit of the kinetic equation without an interface has been considered 
in~\cite{bb,jko,mmm}. The corresponding rescaled problem, with $N\to+\infty$,
is
\begin{equation}
\label{kinetic-sc0}
\begin{aligned}
&\dfrac{1}{N^{\al}}\partial_t
  W_N(t,y,k)+\dfrac{1}{N}\bar\om'(k)\partial_yW_N(t,y,k)=\ga_0
  L_kW_N(t,y,k), %&(t,y,k)\in\bbR_+\times \bbR_*\times \bbT_*,\\
\\
&W_N(0,y,k)=W_0(y,k),
\end{aligned}
\end{equation}
with  an appropriate exponent $\al>0$,
with $\alpha=2$ corresponding to the classical diffusive scaling. 
An important feature of the phonon scattering is that 
the total scattering kernel
\begin{equation}
\label{tot0}
R(k):=\int_{\bbT}R(k,k')dk'
\end{equation}
degenerates at $k=0$ % so that $R(k)\sim |k|^2$,
-- phonons at a low frequency scatter much less. 
The correct choice of the time rescaling exponent $\al$ depends then
on the properties of the dispersion relation. In the optical case, when 
${\bar\om'(k)}\sim k$, $|k|\ll1$, so that the low frequency phonons not only scatter less but also travel slower, 
the scaling in \eqref{kinetic-sc0}
is diffusive, so that~$\al=2$ and $W_N(t,y,k)$ converges as $N\to+\infty$ to
the solution to a heat equation 
\begin{equation}
\label{heat0}
%\left\{
%\begin{array}{ll}
\partial_t W(t,y)=\hat c\partial_{yy}^2 W(t,y), %&(t,y)\in\bbR_+\times \bbR_*,\\
\end{equation}
with the initial condition  
\[
W(0,y)=\mathlarger{\int}_{\bbT}W_0(y,k)dk,
\]
and an appropriate diffusion coefficient  $\hat c>0$.

When, on the other hand, the dispersion relation is acoustic, so that
${\bar\om'(k)}\sim {\rm sign}\,k$, for~$|k|\ll1$, and the phonons at low frequency scatter less but move as fast
as other phonons, 
then the scaling is
super-diffusive, with $\al=3/2$ and the limit of $W_N(t,y,k)$ as $N\to+\infty$ satisfies the fractional
heat equation
\begin{equation}
\label{heat1}
\begin{aligned}
&\partial_t W(t,y)=-\hat c|\partial_{yy}^2|^{3/4} W(t,y), \\ %&(t,y)\in\bbR_+\times \bbR_*,\\
&
W(0,y)=\mathlarger{\int}_{\bbT}W_0(y,k)dk,
\end{aligned}
\end{equation}
with an appropriate fractional diffusion coefficient
$\hat c>0$. In both cases the limit $W(t,y)$ does not depend on the frequency $k$.
Results of this type under various assumptions on
the scattering kernel (but without an interface present) have been proved in~\cite{bb,AMP,FS,jko,mmm}.

Our interest here is to understand the long time behavior of the solutions to the kinetic equation
with an acoustic
dispersion relation  in the presence of
the interface, so that (\ref{kinetic-sc0}) holds away from $y=0$, and the 
interface conditions (\ref{feb1408})-(\ref{feb1410}) for $W_N$ hold at $y=0$. 
 We allow the total scattering rate to degenerate as
$R(k)\sim|\sin(\pi k)|^{\beta}$ for some $\beta>1$. The
case~$\beta\in(0,1)$ has been considered in~\cite{kob}, 
with the initial condition that is a local perturbation of the the equilibrium solution
$W(t,y,k)\equiv T$. It 
leads to a diffusive scaling and the limit described by a heat
equation \eqref{heat1}, with  a pure absorption interface
condition  $W(t,0)=T$.   In that situation, the degeneracy of scattering 
at low frequencies is
not strong enough to prevent the diffusive behavior. 

In order to formulate our main result, let us make some assumptions
on the scattering kernel, reflection-transmission-absorption coefficients and the initial condition.
We assume that the scattering kernel is symmetric
\begin{equation}
\label{sym}
R(k,k')=R(k',k),
\end{equation}
positive, except possibly at $k=0$:
\begin{equation}
\label{pos}
R(k,k')>0,\quad k,k'\neq 0,
\end{equation}
and the total scattering kernel has the asymptotics
\begin{equation}
\label{tot}
R(k):=\int_{\bbT}R(k,k')dk'\sim R_0|\sin(\pi k)|^{\beta},\quad |k|\ll 1,
\end{equation}
with some $\beta\ge 0$ and $R_0>0$. We  also assume that the normalized cross-section
\begin{equation}
\label{ext}
p(k,k'):=\frac{R(k,k')}{R(k)},\quad k,k'\neq 0,
\end{equation}
extends to a $C^\infty$ function on $\bbT^2$. Note that
\begin{equation}\label{apr202}
\int_{\Tm} p(k,k')dk'=1,\hbox{ for all $k\neq 0$.}
\end{equation}
 For the dispersion relation, we assume that it is acoustic, that is, 
\begin{equation}
\label{acoust}
\om(k)\sim 2\om_0'|\sin(\pi k)|,\quad |k|\ll 1,
\end{equation}
with some $\om_0'>0$, and that $\om(k)$ is even in $k$.

To make the precise assumptions on $W_0(y,k)$, we will use the notation
$
\bbR_+=(0,+\infty)$, $\bbR_-=(-\infty,0)$,~$\bbR_*=\bbR\setminus\{0\}$,~$\bar\bbR_+=[0,+\infty)$, ~$\bar\bbR_-=(-\infty,0]$,
as well as~$\bbT_*=\bbT\setminus\{0\}$,~and $\bbT_\pm=[k:\,0<\pm k<1/2].
$
Given $T$, we let ${\cal C}_T$ be a
subclass of $C_b(\bbR_*\times\bbT_*)$  of functions~$F$ that can be
continuously extended to $\bar\bbR_\pm\times\bbT_*$ and satisfy the
interface conditions
\begin{align}
\label{feb1408aa}
&
F(0^+,k)=p_-(k)F(0^+, -k)+p_+(k)F(0^-,k)+{\frak g}(k)T,& \hbox{ for $0< k\le 1/2$},\\
&
F(0^-, k)=p_-(k)F(0^-,-k) + p_+(k)F(0^+, k) +{\frak g}(k)T,& \hbox{ for $-1/2< k< 0$.}\nonumber
\end{align}
 %\begin{remark}
%\label{rm012603-19}
{Note that  $F\in{\cal C}_T$ if and only if $F-T'\in{\cal C}_{T-T'}$
for some $T'$, because of (\ref{012304}).} 
%\end{remark}

In the presence of the interface, the fractional diffusion equation (\ref{heat1}) is replaced by the following non-local
equation
\begin{equation}\label{apr204}
\begin{aligned}
&\partial_tW(t,y)=\hat c\int_{yy'>0}q_\beta(y-y')[W(t,y')-W(t,y)]dy'+\hat c{\frak g}_0\int_{yy'<0}q_\beta(y-y')[T-W(t,y)]dy'\\
&+\hat cp_-\int_{yy'<0}q_\beta(y-y')[W(t,-y')-W(t,y)]dy'+
\hat cp_+\int_{yy'<0}q_\beta(y-y')[W(t,y')-W(t,y)]dy'.
\end{aligned}
\end{equation}
Here, $\hat c$ is a fractional diffusion coefficient given by (\ref{hatc}) below, $p_\pm$ and $\frak g_0$ are as in (\ref{non-deg2a}), and
\begin{equation}
\label{cal}
q_\beta(y)=\frac{c_\beta}{|y|^{2+1/\beta}},~~~ 
c_\beta:=\frac{2^{1+1/\beta}\Gamma\left(1+1/(2\beta)\right)}{\sqrt{\pi}\Gamma\left(-1/2+1/(2\beta)\right)}.%\quad y\in\bbR_*,
\end{equation}
%and the normalization constant
%\begin{equation}
%\label{cal}
% c_\beta:=\frac{2^{1+1/\beta}\Gamma\left(1+1/(2\beta)\right)}{\sqrt{\pi}\Gamma\left(-1/2+1/(2\beta)\right)}.
%\end{equation}
As we explain below, equation (\ref{apr204}) automatically incorporates the interface conditions. 
%The precise meaning of a weak solution to (\ref{apr204}) 
%is specified in Definition $\ref{df011803-19}$ below. 
% Informally, it says that $W(t,y)$ is a weak solution of (\ref{apr204}) that satisfies
% the correct interface conditions and lies in a certain energy space. 
Our main result is as follows. 
\begin{thm}
\label{main-thm}
In addition to the above assumptions about the scattering kernel
$R(\cdot,\cdot)$ and the  dispersion relation $\om(\cdot)$,
suppose that $\beta>1$ and $W_0\in
{\cal C}_T$, and let $W_N(t,y,k)$ be the solution to (\ref{kinetic-sc0}) with $\alpha=1+1/\beta$. Then, we have
\begin{equation}
\label{conv}
\lim_{N\to+\infty}\int_{\bbR\times \bbT}W_N(t,y,k)G(y,k)dy
dk=\int_{\bbR\times \bbT}W(t,y)G(y,k)dy dk,
\end{equation}
for any
$t>0$, and any test function $G\in C^\infty_0(\bbR\times\bbT)$.  The limit $W(t,y)$
is a weak solution of equation \eqref{apr204}, in the sense of Definition $\ref{df011803-19}$,
with the initial condition
\begin{equation}
\label{022603-19}
\bar W_0(y):=\int_{\bbT}W_0(y,k)dk
\end{equation}
and the fractional diffusion coefficient
\begin{equation}
\label{hatc}
\hat c:=\frac{2\pi^{\beta}(\om'_0)^{1+1/\beta}}{\beta (\ga_0R_0)^{1/\beta}}\,{\rm p.v.}\int_{\bbR}\frac{(e^{i\la}-1)d\la}{|\la|^{2+1/\beta}}.
\end{equation}
\end{thm}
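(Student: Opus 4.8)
The plan is to pass to the limit in a weak formulation of the rescaled kinetic problem, working after a Laplace transform in time. Writing the rescaled equation as $\partial_tW_N+N^{1/\beta}\bar\om'(k)\partial_yW_N=N^{1+1/\beta}\ga_0L_kW_N$ (using $\al=1+1/\beta$), I would first record uniform a priori bounds. Since $L_k$ preserves order and the interface conditions \eqref{feb1408}--\eqref{feb1410} express the outgoing density as a convex combination governed by \eqref{012304}, the maximum principle gives $\|W_N(t)\|_{L^\infty}\le\max(\|W_0\|_{L^\infty},T)$ uniformly in $N$ and $t$. This yields weak-$\ast$ precompactness in $L^\infty([0,\infty)\times\bbR\times\bbT)$, so that along a subsequence $W_N\rightharpoonup W$. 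Testing against a fixed $\phi\in C_0^\infty$ and using the self-adjointness of $L_k$, the term of order $N^{1+1/\beta}$ must vanish in the limit, forcing $L_kW=0$; since $R(k,k')>0$ for $k,k'\ne0$ makes the jump process irreducible on $\bbT_*$, the kernel of $L_k$ consists of the $k$-independent functions, so $W=W(t,y)$ and \eqref{conv} reduces to identifying the equation solved by $W$.

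The core of the argument is the construction of a corrector adapted to the degenerate operator, producing simultaneously the bulk fractional operator with the explicit constant $\hat c$ and the interface structure. For a test function $G$ I would solve an auxiliary (adjoint) resolvent problem $N^{1/\beta}\bar\om'(k)\partial_y\phi_N+N^{1+1/\beta}\ga_0L_k\phi_N=(\text{source})$, whose solution differs from $G$ by a corrector concentrated at low frequencies. The mechanism is transparent at the level of the underlying jump-transport process $(Y(t),K(t))$: near $k=0$ the rate $R(k)\sim R_0|\sin\pi k|^\beta$ degenerates, so a phonon travels ballistically with velocity $\bar\om'(k)\to\pm\om_0'$ for a time of order $(\ga_0R_0|\pi k|^\beta)^{-1}$, producing a displacement of order $|k|^{-\beta}$. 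As $\{|k|<\delta\}$ has Lebesgue probability of order $\delta$, these flights have a heavy tail of index $1/\beta$ with waiting time and jump length of the same order; the resulting L\'evy-walk scaling yields a stable process of index $1+1/\beta$, and the constant $\hat c$ in \eqref{hatc} comes from the low-frequency resolvent asymptotics evaluated against $\mathrm{p.v.}\int_\bbR(e^{i\la}-1)|\la|^{-(2+1/\beta)}d\la$, the prefactor collecting $\om_0'$, $\ga_0$, $R_0$ and $\beta$.

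The interface is incorporated by tracking how these dominant long flights interact with $y=0$. A macroscopic jump of $W$ crossing the interface is, microscopically, resolved by \eqref{feb1408}--\eqref{feb1410}: the incoming phonon of frequency $k$ is transmitted with probability $p_+(k)$ (landing at $y'$), reflected with probability $p_-(k)$ (landing at the mirror point $-y'$), or absorbed with probability $\frak g(k)$ and re-emitted at temperature $T$. Because the flights that survive to macroscopic scale are exactly those with $k\to0$, only the limiting coefficients $p_\pm=\lim_{k\to0}p_\pm(k)$ and $\frak g_0=\lim_{k\to0}\frak g(k)$ from \eqref{non-deg2a} appear, while the H\"older control \eqref{non-deg1} guarantees that freezing $p_\pm(k),\frak g(k)$ at these limits costs a negligible error. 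This converts the four crossing mechanisms into the four integral terms of \eqref{apr204}, with the cross-interface kernels split into transmission ($p_+$), reflection ($p_-$, jump to $-y'$) and absorption ($\frak g_0$, replacement by $T$). One must also verify that the corrected test functions respect the matching implied by membership in ${\cal C}_T$, so that no spurious boundary contribution survives.

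The final step is to show that the limiting non-local problem \eqref{apr204} with datum $\bar W_0$ has a unique weak solution in the sense of Definition \ref{df011803-19}, which upgrades subsequential convergence to convergence of the full family and completes the identification. I expect the main obstacle to be the \emph{joint} singular analysis at $k=0$ and $y=0$: the very flights responsible for the fractional, and hence non-local, limit are precisely those that reach and cross the interface, so the low-frequency corrector estimates and the interface matching cannot be decoupled. Controlling the corrector uniformly up to the interface, and justifying that the $k$-dependent reflection-transmission-absorption coefficients may be replaced by their $k\to0$ limits without altering the limiting kernels, is the delicate technical heart of the proof.
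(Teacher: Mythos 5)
Your proposal follows the analytic corrector/moments route of Mellet--Mischler--Mouhot and Ben Abdallah--Mellet--Puel, whereas the paper argues probabilistically: it represents $W_N$ as $\bbE[W_0(Y_N^o,K_N^o),\,t<\tilde{\frak s}^N_{y,\frak f_N}]+T\bbP[t\ge\tilde{\frak s}^N_{y,\frak f_N}]$ (Proposition \ref{prop010701-19a}), proves convergence in law of the rescaled reflected--transmitted--killed process to a stable process with the analogous interface mechanism (Theorems \ref{thm013112} and \ref{thm010401-19}), and only afterwards verifies that the explicit limit $W(t,y)=T+\bbE[\bar W_0(\eta^o(t;y)),\,t<\hat{\frak u}_{y,\frak f}]$ is a weak solution. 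Your heuristic for the L\'evy-walk scaling, the origin of $\hat c$, and the emergence of $p_\pm,\frak g_0$ from the $k\to0$ limits (using \eqref{non-deg1}) is consistent with what the paper does at the level of the process.

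However, there is a genuine gap in your concluding step. You propose to upgrade subsequential weak-$\ast$ convergence to convergence of the full family by invoking uniqueness of weak solutions of \eqref{apr204} in the sense of Definition \ref{df011803-19}. That uniqueness is not available: the test functions in \eqref{021803-19xx} are supported in $\bbR_*$, i.e.\ away from the interface, so the weak formulation carries no information about the behaviour of the solution at $y=0$ --- the paper states this explicitly after Definition \ref{df011803-19}, and it is precisely the obstruction left open in the related work \cite{cmp}. Uniqueness holds only in the restricted class $\frak H_T$ (Proposition \ref{thm:unique}), membership in which requires the energy estimate $\int_0^t\|W(s)-T\|_{{\cal H}_0}^2\,ds<\infty$; proving that the limit lies in this class is the separate content of Theorem \ref{thm012205-19} and needs the \emph{additional} hypothesis $W_0-T'\in L^1$, which is not assumed in Theorem \ref{main-thm}. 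So under the stated hypotheses your identification of the limit cannot be closed by a uniqueness argument for the limiting equation; you would need either to carry out the ${\cal H}_0$-energy estimate uniformly at the kinetic level (essentially redoing Section \ref{sec:proof-theor-refthm01} with weaker assumptions), or to identify the limit directly and constructively, as the paper does through the process-level convergence. Relatedly, your corrector step is asserted rather than constructed: the corrected test functions must satisfy the adjoint interface matching exactly (not just up to $o(1)$), since any mismatch is multiplied by the diverging flux of low-frequency phonons across $y=0$; this is the point you correctly flag as delicate, but it is where the analytic route has to do real work that your outline does not yet supply.
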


% If, in addition, we
% suppose that $W_0\in
% {\cal C}_T\cap L^1(\bbR\times \bbT)$, then  $W(t,y)$ satisfies
% \eqref{apr204}, in the sense of Definition $\ref{df011803-19}$, with $f\equiv 0$ and the initial data $\bar W_0$.

The proof of this theorem proceeds as follows: as we have mentioned,  
the kinetic equation~(\ref{kinetic-sc0}) is the Kolmogorov
equation for a Markov process $(Z_N(t),K_N(t))$, where the frequency $K_N(t)$ is a 
certain jump process and the spatial
component $Z_N(t)$ is the time integral of $\bar\omega'(K_N(t))$. 
This process can be generalized to incorporate the 
reflection-transmission-absorption at the interface.
Similarly, we show that (\ref{apr204}) is a Kolmogorov equation for a certain
stable  process $\zeta(t)$ that undergoes reflection-transmission-absorption at
the interface. 
{We prove that $Z_N(t,y)$ converges to $\zeta(t)$ in law. This shows that $W_N(t,y,k)$
converges to a weak solution $W(t,y)$ of (\ref{apr204}), such
that~$W(t,y)=\E[\bar W_0(\zeta(t))|\zeta(0)=y]$.

Theorem \ref{main-thm} identifies the limit as a weak solution only in the sense of Definition \ref{df011803-19} below, that
does not characterize its behaviour at the interface. In order to obtain this information we need to prove that the limit
belongs to a class of functions that satisfy a certain 
regularity condition at the interface (see \eqref{Hnorm} and \eqref{eq:1}).
When it is imposed the solution is unique.

In Theorem \ref{thm012205-19} we prove that the weak
solution obtained in Theorem \ref{main-thm}
belongs to this regularity class, under the 
further assumption that the initial condition $\bar W_0(y)$
belongs to $L^1$ add to an additive constant.
{% It can  be seen, see Section \ref{eq:1},
  % that equation  (\ref{apr204}) has a unique solution in the space of
  % functions satisfying condition \eqref{012205-19}.    
 %  In Section
% \ref{sec:proof-theor-refthm01}  we prove that the limit $W(t,y)$ described in Theorem
% \ref{main-thm} satisfies this condition.
To this end, 
we construct another approximation $\zeta_a(t)$ of $\zeta(t)$ that converges in law to $\zeta(t)$ as $a\to 0^+$ and 
\[
W_a(t,y)=\E\left[\bar W_0(\zeta_a(t))|\zeta_a(0)=y\right]\to W(t,y)=\E[\bar W_0(\zeta(t))|\zeta(0)=y].
\]
However, we ensure that $W_a(t,y)$ also satisfies an energy estimate,
see \eqref{022003-19zz1} below, thus so does
$W(t,y)$ in the limit.
%which in turn proves that $W(t,y)$ satisfies  \eqref{012205-19}.
}

A kinetic problem with similar conditions at the interface
appears as the macroscopic limit of a system of oscillators driven by a random noise
that conserves {energy, momentum and volume} \cite{bos}. This microscopic model has been recently
considered in ~\cite{kors}, with a thermostat at a fixed temperature $T\ge 0$ acting on one particle, 
so that the phonons may be 
emitted, reflected or transmitted, and the corresponding macroscopic interface conditions have been
obtained, in the absence of the bulk scattering, corresponding to $\gamma_0=0$ in (\ref{eq:8}).
It is believed that the above macroscopic interface conditions also hold in  the presence of
interior microscopic scattering when $\ga_0>0$. However, for the absorbing probability arising from
this microscopic dynamics, we have $\frak g_0 = 0$ (cf \eqref{non-deg2a}).
This generates a different interface condition for the macroscopic limit \cite{kor20}
from the one obtained here. 

There seem to be few results on a fractional diffusion limit for kinetic equations in the
presence of an interface. In \cite{ce}, the
case of  absorbing, or reflecting boundary, but with the operator~$L$ 
that  itself is a generator of a fractional diffusion, has been
considered. Another situation, closer to ours, is
a subject of \cite{cmp}, where the convergence of solutions to kinetic
equations with the diffusive reflection conditions  on the boundary is investigated. 
This condition   is,
however, different from our interface condition that concerns reflection-transmission-absorption. Also, in contrast to our situation, the results of
\cite{cmp}, %see Theorems 1.2 and 1.3 of \cite{cmp}, 
do not establish
the uniqueness of the limit for solutions of the kinetic equation,
stating only that it satisfies a certain fractional diffusive equation
with a boundary condition. The question of the uniqueness of the
solution for the limiting equation seems to be left open, see the
remark after Theorem 1.2 in~\cite{cmp}. We mention 
here also a result of \cite{bgm}, where solutions of a stationary (time independent)
linear kinetic equation are considered. The spatial domain is a half-space, with the absorbing-reflecting-emitting boundary, of a different type than in the present paper,  and frequencies belong  to a cylindrical domain. It has been shown that under an appropriate scaling the solutions converge to a harmonic function corresponding to a Neumann boundary, fractional Laplacian with exponent $1/2$.

\section{Some preliminaries}
%On the kinetic equation with    reflection-transmission-absorption at an interface }
\label{sec2}

%\subsection{Definition of a solution} 
%of a kinetic equation with reflection-transmission-absorption}
%In what follows we shall also use the notation
%$$
%\bbR_-:=(-\infty,0),\quad \bbT_\pm:=[k:\,0<\pm k<3/4]. 
%$$
%We denote by ${\cal C}$ the class of functions
%$W:\bbR_+\times\bbR_*\times \bbT_*\to\bbR$ that are bounded,
%continuous and the following conditions hold:
%\begin{itemize}
%\item[(1)] 
%The restrictions of $W$ to  
%$\bbR_+\times\bbR_\iota\times \bbT_*$, $\iota\in\{-,+\}$, can be extended
%to bounded and continuous functions on 
%$\bar\bbR_+\times\bar\bbR_{\iota}\times \bar\bbT_{\iota'}$,
%$\iota'\in\{-,+\}$.
%\item[(2)] For each $(t,y,k)\in \bbR_+\times\bbR_*\times  \bbT_*$ fixed, the function
%$W(t+s,y+\bar\om'(k) s,k)$ is of the~$C^1$ class in
%the $s$-variable in a  neighborhood of $s=0$, so that the directional derivative  
%%We denote by 
%\begin{equation}
%\label{Dt}
%D_tW(t,y,k)=\left(\partial_t+\bar\om'(k)\partial_y\right)W(t,y,k):=\frac{d}{ds}_{|s=0} W(t+s,y+\bar\om'(k) s,k)
%\end{equation}
%%the directional derivative of $W $ and assume that it 
%is bounded in $\bbR_+\times\bbR_*\times \bbT_*$.
%\end{itemize}

\subsubsection*{The classical solution of the kinetic interface problem}

We start with the definition of a classical solution to the kinetic interface problem.
\begin{df}
\label{df013001-19}
We say that a function
$W(t,x,k)$, $t\ge 0$, $x\in\Rm$, $k\in\bbT_*$, %\bar \bbR_+\times\bbR\times \bbT_*\to\bbR$ 
is a classical
solution to equation \eqref{eq:8} with the interface conditions
\eqref{feb1408} and \eqref{feb1410}, if it is bounded and continuous on
$\bbR_+\times\bbR_*\times \bbT_*$, and the
following conditions hold:
\begin{itemize}
\item[(1)] 
The restrictions of $W$ to  
$\bbR_+\times\bbR_\iota\times \bbT_*$, $\iota\in\{-,+\}$, can be extended
to bounded and continuous functions on 
$\bar\bbR_+\times\bar\bbR_{\iota}\times \bar\bbT_{\iota'}$,
$\iota'\in\{-,+\}$.
\item[(2)] For each $(t,y,k)\in \bbR_+\times\bbR_*\times  \bbT_*$ fixed, the function
$W(t+s,y+\bar\om'(k) s,k)$ is of the~$C^1$ class in
the $s$-variable in a  neighborhood of $s=0$, and the directional derivative  
%We denote by 
\begin{equation}
\label{Dt}
D_tW(t,y,k)=\left(\partial_t+\bar\om'(k)\partial_y\right)W(t,y,k):=\frac{d}{ds}_{|s=0} W(t+s,y+\bar\om'(k) s,k)
\end{equation}
%the directional derivative of $W $ and assume that it 
is bounded in $\bbR_+\times\bbR_*\times \bbT_*$
and satisfies
\begin{equation}
  \label{eq:8a}
 \begin{array}{ll}
 D_tW(t,y,k) = \ga_0 L_k W(t,y,k), &
\quad (t,y,k)\in\bbR_+\times \bbR_*\times\bbT_*,
\end{array}
\end{equation}
together with \eqref{feb1408} and \eqref{feb1410}  and
\begin{equation}
\label{010102-19}
\lim_{t\to0+}W(t,y,k)=W_0(y,k),\quad (y,k)\in\bbR_*\times\bbT_*.
\end{equation}
\end{itemize}
\end{df}

%We assume that the scattering kernel is symmetric
%\begin{equation}
%\label{sym}
%R(k,k')=R(k',k),
%\end{equation}
%positive, except possibly at $k=0$:
%\begin{equation}
%\label{pos}
%R(k,k')>0,\quad (k,k')\in\bbT_*^2
%\end{equation}
%and the total scattering kernel has the asymptotics
%\begin{equation}
%\label{tot}
%R(k):=\int_{\bbT}R(k,k')dk'\sim R_0|\sin(\pi k)|^{\beta},\quad |k|\ll 1,
%\end{equation}
%with some $\beta\ge 0$ and $R_0>0$. We  also assume that the normalized cross-section
%\begin{equation}
%\label{ext}
%p(k,k'):=\frac{R(k,k')}{R(k)},\quad  (k,k')\in\bbT_*^2,
%\end{equation}
%extends to a $C^\infty$ function on $\bbT^2$. Note that
%\begin{equation}\label{apr202}
%\int_{\Tm_*} p(k,k')dk'=1,\hbox{ for all $k\in\Tm_*$.}
%\end{equation}
% For the dispersion relation, we assume that it is acoustic, that is, 
%\begin{equation}
%\label{acoust}
%\om(k)\sim 2\om_0'|\sin(\pi k)|,\quad |k|\ll 1,
%\end{equation}
%with some $\om_0'>0$, and that $\om(k)$ is even in $k$. 

%\begin{df}
%\label{df012603-19}
%Given $T\ge0$, let ${\cal C}_T$ be a
%subclass of $C_b(\bbR_*\times\bbT_*)$ that consists of functions~$F$ that can be
%continuously extended to $\bar\bbR_\pm\times\bbT_*$ and satisfy the
%interface conditions
%\begin{align}
%\label{feb1408aa}
%&
%F(0^+,k)=p_-(k)F(0^+, -k)+p_+(k)F(0^-,k)+{\frak g}(k)T,& \hbox{ for $0< k\le 1/2$},\\
%&
%F(0^-, k)=p_-(k)F(0^-,-k) + p_+(k)F(0^+, k) +{\frak g}(k)T,& \hbox{ for $-1/2< k< 0$.}\nonumber
%\end{align}
%\end{df}
%%\begin{remark}
%%\label{rm012603-19}
%Note that  $F\in{\cal C}_T$ if and only if $F-T\in{\cal C}_0$, because of (\ref{012304}). 
%%\end{remark}
The following result is standard. 
\begin{prop}
\label{prop013001-19}
Suppose that $W_0\in{\cal C}_T$. Then, under the above
hypotheses on the scattering kernel $R(k,k')$ and the
dispersion relation $\om(k)$, there exists a
unique classical
solution to  equation \eqref{eq:8} with the interface conditions
\eqref{feb1408} and \eqref{feb1410} in the sense of Definition $\ref{df013001-19}$.   
\end{prop}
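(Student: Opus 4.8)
The plan is to treat \eqref{eq:8} as a bounded perturbation of a transport semigroup that has the interface conditions built into its very definition. First I would reduce to the homogeneous interface problem: since $W\equiv T$ solves \eqref{eq:8} together with \eqref{feb1408}--\eqref{feb1410} (indeed $L_kT=0$, and by \eqref{012304} one has $T=(p_-+p_++{\mathfrak g})T$ at the interface), I may replace $W$ by $W-T$ and $W_0$ by $W_0-T$, using the remark that $F\in{\cal C}_T$ iff $F-T\in{\cal C}_0$. Thus I assume henceforth $T=0$, so that \eqref{feb1408}--\eqref{feb1410} become linear homogeneous relations with no source. I also record that, under the standing hypotheses, $R(k)=\int_{\bbT}R(k,k')dk'$ is bounded and $p(k,k')$ is smooth, so the scattering operator $L_k$ is a bounded operator on $C_b(\bbT_*)$, with $\|L_k\|\le 2\|R\|_\infty$, and that the group velocity $\bar\om'$ is bounded.

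The heart of the matter is the free streaming operator. Observe that the interface relations, read together, are \emph{already solved} for the outgoing densities: \eqref{feb1408} expresses the outgoing value $W(t,0^+,k)$ ($k>0$) in terms of the two incoming values $W(t,0^+,-k)$ and $W(t,0^-,k)$, while \eqref{feb1410}, after the substitution $k\mapsto -k$ and using that $p_\pm$ and ${\mathfrak g}$ are even, expresses the outgoing value $W(t,0^-,-k)$ in terms of $W(t,0^-,k)$ and $W(t,0^+,-k)$. There is therefore no coupled system to invert. I would use this to define the transport semigroup $\{T(t)\}_{t\ge0}$ by the method of characteristics: along the line $s\mapsto y+\bar\om'(k)s$ a solution of $\partial_tW+\bar\om'(k)\partial_yW=0$ is constant, and when the backward characteristic through $(y,k)$ meets the interface its value is rewritten through the appropriate relation above. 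A direct check of the sign of $\bar\om'(k)$ (which, for the acoustic dispersion, stays bounded away from $0$ near $k=0$ and has the sign of $k$) shows that a backward characteristic crosses $y=0$ \emph{at most once} before reaching $t=0$: the reflected branch traces back into the half-line it started from, and the transmitted branch into the opposite half-line, neither returning to the interface. Hence $T(t)$ is given by an explicit one-crossing formula, is a contraction on $C_b$ (because $p_-(k)+p_+(k)\le1$), and---this is the delicate point---maps the class of functions admitting the one-sided continuous extensions of Definition \ref{df013001-19}(1) into itself. The hypothesis $W_0\in{\cal C}_0$ is exactly the compatibility condition guaranteeing that the one-sided limits of $T(t)W_0$ match across the interface and across the grazing set $\{y=\bar\om'(k)t\}$, so that $T(t)W_0$ is continuous in the required sense and attains $W_0$ as $t\to0^+$.

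With $T(t)$ in hand I would solve the full equation via the mild (Duhamel) formulation
\[
W(t)=T(t)W_0+\ga_0\int_0^tT(t-s)L_kW(s)\,ds,
\]
and resolve it by the Picard series $W=\sum_{n\ge0}\ga_0^n\int_{0<s_1<\cdots<s_n<t}T(t-s_n)L_k\cdots L_kT(s_1)W_0\,ds$. Since $\|T(t)\|\le1$ and $L_k$ is bounded, this series converges in $C_b$ uniformly on every interval $[0,\tau]$, giving a global bounded solution, and the same estimate yields uniqueness of the mild solution. The structural point making this consistent with the interface is that, for \emph{any} bounded $g$, the function $T(\tau)g$ satisfies the homogeneous interface relations (a one-line verification from the explicit formula); hence every term of the series, and so $W(t)$, satisfies \eqref{feb1408}--\eqref{feb1410} with $T=0$, i.e. $W+T$ satisfies the original interface conditions, and the initial condition \eqref{010102-19} holds since the Duhamel term vanishes as $t\to0^+$. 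Finally I would upgrade the mild solution to a classical one: differentiating $T(t-s)W(s)$ along characteristics shows that the Duhamel integral is of class $C^1$ along characteristics, that the directional derivative $D_tW$ of \eqref{Dt} exists, is bounded, and satisfies \eqref{eq:8a}. Conversely, any classical solution satisfies the same Duhamel identity, so uniqueness in the classical class follows from that of the mild solution. The main obstacle throughout is not the scattering---bounded perturbation theory disposes of it---but the careful construction of $T(t)$ and the verification that it preserves the one-sided continuity and the interface matching; this is precisely where $W_0\in{\cal C}_T$ and the evenness and continuity assumptions on $p_\pm,{\mathfrak g}$ are used.
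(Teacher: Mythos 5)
Your construction is essentially the paper's: after the same reduction to $T=0$, the paper (Appendix \ref{app}) also builds an explicit free-flight semigroup whose formula \eqref{010304} encodes exactly your one-crossing observation (the indicator $1_{[0,\bar\om'(k)t]}(y)$ detects the single interface crossing of the backward characteristic, and the reflected branch retraces into the original half-line while the transmitted branch goes into the opposite one), verifies that it maps ${\cal C}_0$ into itself, and then sums the Duhamel series. There are two genuine, if modest, divergences. First, the paper absorbs the loss part of the scattering into the semigroup, i.e.\ $S_t$ carries the damping factor $e^{-\ga_0 R(k)t}$ and the perturbation is only the gain operator ${\cal R}F(y,k)=\int_{\bbT}R(k,k')F(y,k')dk'$, whereas you keep the pure transport semigroup and perturb by the full bounded operator $L_k$; both series converge for bounded data, and the paper's splitting buys positivity/sub-stochasticity of each term at no extra cost, while yours is marginally simpler to set up. Second, and more substantively, the paper does \emph{not} prove uniqueness from the mild formulation: it derives it from the probabilistic representation of Proposition \ref{prop010701-19a} (any classical solution equals the expectation over the reflected--transmitted--absorbed jump process). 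Your route --- show that every classical solution satisfies the Duhamel identity and apply a Gr\"onwall/contraction estimate --- is more elementary and self-contained; its only delicate point is the one you flag, namely justifying the differentiation of $s\mapsto T(t-s)W(s)$ across the parameter value at which the backward characteristic hits the interface, where continuity of the resulting function is precisely the interface condition satisfied by $W(s,\cdot,\cdot)\in{\cal C}_0$. With that verification spelled out, your argument is complete and in fact yields uniqueness without the machinery of Section \ref{sec3}.
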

The existence part is proved in Appendix \ref{app} below, while
uniqueness follows from Proposition~\ref{prop010701-19a}, also below.
%\marginpar{\textcolor{red}{\tiny $\leftarrow$ Proof is missing}}

%\medskip

\subsection{The fractional diffusion equation with an interface} 

Let us now discuss the weak solutions to the fractional diffusion equation with an interface
that will arise as the long time asymptotics of the kinetic interface problem. 
For $\beta>1$, we define the fractional Laplacian $\Lambda_\beta=(-\partial_{y}^2)^{(\beta+1)/(2\beta)}$ 
%$|\partial_{y}^2|^{(\beta+1)/(2\beta)}$ 
as  the $L^2$ (self-adjoint) closure of the 
singular integral operator  %, cf \cite{kwasnicki}, p. 9,
\begin{equation}
\label{frac-lap}
\Lambda_\beta F(y):=%(-\partial_{y}^2)^{(\beta+1)/(2\beta)} F(y):=
{\rm
  p.v.}\int_{\bbR}q_\beta(y-y')\left[F(y)-F(y')\right]dy',\quad F\in C_0^\infty(\bbR),
\end{equation}
understood in the sense of the principal value, with $q_\beta(y)$ as in (\ref{cal}). 
%$$
%q_\beta(y)=\frac{c_\beta}{|y|^{2+1/\beta}}, %\quad y\in\bbR_*,
%$$
%and the normalization constant
%\begin{equation}
%\label{cal-bis}
% c_\beta:=\frac{2^{1+1/\beta}\Gamma\left(1+1/(2\beta)\right)}{\sqrt{\pi}\Gamma\left(-1/2+1/(2\beta)\right)}.
%\end{equation}
%The Fourier transform definition of $\Lambda_\beta$ is
%\begin{equation}
%\label{frac-lap1}
%{\cal F}\left(\Lambda_\beta
%  G\right)(\xi)=(2\pi|\xi|)^{1+1/\beta}{\cal F}(G)(\xi),\quad \xi\in \bbR,
%\end{equation}
%where
%$$
%{\cal F}(G)(\xi)=\int_{\bbR} e^{-2\pi i\xi y}G(y)dy
%$$ 
%is the Fourier transform of a function $G$. 

The operator $-\Lambda_\beta$ is the generator of a L\'evy process. In order to introduce an interface, let us assume that if a particle tries to make a L\'evy jump from $y$ to $y'$ such that $y$ and~$y'$ have the same sign, then the jump happens almost surely. However, if $y$ and $y'$ have
different signs, then with the probability $p_+$ the particle jumps to $y'$, with probability $p_-$ it jumps to~$(-y')$ and with probability 
${\frak g}_0$ it is killed at the interface $y=0$, where a boundary condition $W(t,0)=T$ is prescribed. Recall that these probabilities
satisfy (\ref{012304}). The corresponding Kolmogorov equation is then (\ref{apr204}). 
%\begin{equation}\label{apr204-bis}
%\begin{aligned}
%&\partial_tW(t,y)=\hat c\int_{yy'>0}q_\beta(y-y')[W(t,y')-W(t,y)]dy'+\hat c{\frak g}_0\int_{yy'<0}q_\beta(y-y')[T-W(t,y)]dy'\\
%&+\hat cp_-\int_{yy'<0}q_\beta(y-y')[W(t,-y')-W(t,y)]dy'+
%\hat cp_+\int_{yy'<0}q_\beta(y-y')[W(t,y')-W(t,y)]dy',
%\end{aligned}
%\end{equation}
%where $\hat c$ is a fractional diffusion coefficient. 
Using relation (\ref{012304}), the right side of (\ref{apr204}) can be re-written as
%\begin{equation}\label{apr206}
%\begin{aligned}
%&\partial_tW(t,y)=-\hat c\Lambda_\beta W(t,y)+\hat c\int_{yy'<0}q_\beta(y-y')\big[-W(t,y')+W(t,y)+p_-[W(t,-y')-W(t,y)]\\
%&+ p_+\ [W(t,y')-W(t,y)] 
% + {\frak g}_0\ [T-W(t,y)]\big]dy'\\
% &=-\hat c\Lambda_\beta W(t,y)+\hat c\int_{yy'<0}q_\beta(y-y')\big[{\frak g}_0T+(-1+p_+)W(t,y')+p_-W(t,-y')\big]dy',
%\end{aligned}
%\end{equation}
%which can be written as
\begin{equation}\label{013101-19x}
\partial_tW(t,y)=-\hat c\Lambda_\beta W(t,y)+\hat c\int\limits_{yy'<0}q_\beta(y-y')\big[{\frak g}_0(T-W(t,y')) 
+p_-(W(t,-y')-W(t,y'))\big]dy'.
\end{equation}
% It is convenient to represent $W(t,y)$ in the form
% \begin{equation}\label{apr408}
% W(t,y)=\Psi(t,y)+T\chi(y),
% \end{equation}
% where $\chi(y)$ is a smooth compactly supported function such that
% $\chi(y)=1$ for $|y|\le 1$. 

% The function $\Psi$ satisfies
% \begin{equation}\label{apr416}
% \partial_t\Psi(t,y)=-\hat c\Lambda_\beta \Psi(t,y)+\hat c\int_{yy'<0}q_\beta(y-y')\big[-{\frak g}_0\Psi(t,y')
% +p_-(\Psi(t,-y')-\Psi(t,y'))\big]dy' %+f(t,y),
% \end{equation}
% or, equivalently
% \begin{equation}\label{apr414}
% \begin{aligned}
% &\partial_t\Psi(t,y)=\hat c\int\limits_{yy'>0}q_\beta(y-y')[\Psi(t,y')-\Psi(t,y)]dy'-\hat c{\frak g}_0\Psi(t,y)
% \int\limits_{yy'<0}q_\beta(y-y')dy'\\
% &+\hat cp_-\int\limits_{yy'<0}q_\beta(y-y')[\Psi(t,-y')-\Psi(t,y)]dy'+
% \hat cp_+\int\limits_{yy'<0}q_\beta(y-y')[\Psi(t,y')-\Psi(t,y)]dy'\\
% % &+f(t,y),
% \end{aligned}
% \end{equation}
% with an appropriate $f(t,y)$. 

\begin{df}
\label{df011803-19}
%Suppose that $f\in  L^1_{\rm loc}([0,+\infty),L^2(\bbR))$ and $W_0\in L^2(\bbR)$.
A bounded  function
$W(t,y)$, $(t,y)\in\bar\bbR_+\times \bbR$, is  a  weak
solution to equation \eqref{013101-19x}
% \begin{equation}\label{013101-19xf}
% \begin{aligned}
% &\partial_tW(t,y)=-\hat c\Lambda_\beta W(t,y)\\
% &
% +\hat c\int_{yy'<0}q_\beta(y-y')\big[{\frak g}_0(T-W(t,y')) 
% +p_-(W(t,-y')-W(t,y'))\big]dy' %+f(t,y).
% \end{aligned}
% \end{equation}
if 
 % \begin{itemize}
%  \item[(i)] $W \in L^\infty_{\rm loc}([0,+\infty),L^2(\bbR))$ and
%    $W-T\in  L^2_{\rm loc}([0,+\infty),{\cal H}_0)$ and
% \item[(ii)] 
  for any $t_0>0$ and $G\in C^\infty_0([0,t_0]\times\bbR_*)$ we have
\begin{align} &
\label{021803-19xx} 
0=\int_0^{t_0}dt\int_{\bbR}\left\{\partial_tG(t,y) -\hat c
   %|\partial_{y}^2|^{(\beta+1)/(2\beta)} 
 \Lambda_\beta  G(t,y)\right\} W(t,y)dy %+\int_0^{t_0}dt\int_\Rm f(t,y)G(t,y)dy
\nonumber\\
&
+\hat c  \int_0^{t_0}dt\int_{\bbR} G(t,y)dy\left\{
  p_-\int_{[yy'<0]}q_\beta(y-y')[W(t,-y')-W(t,y')]dy'\right. \\
&
+\left.\frak g_0
  \int_{[yy'<0]}q_\beta(y-y')[T-W(t,y')]dy'\right\}-\int_{\bbR}G(t_0,y)
  W(t_0,y)dy+\int_{\bbR}G(0,y) W_0(y)dy.\nonumber
 \end{align}
% \end{itemize}
\end{df}

Notice that, since the support of the test functions $G$ is bounded away from the interface,
this weak formulation does not give information on the behaviour of the solution at the interface.
In order to capture the behaviour of $W(t,y)$ for $y\to 0^\pm$ we need to consider  solution
in a certain regularity class.
For this purpose we introduce 
the space ${\cal H}_0$ of functions that is the completion of $C_0^\infty(\bbR^d)$
in the norm %such that
\begin{align}
\label{Hnorm}
&\|G\|_{{\cal
  H}_0}^2:=\vphantom{\int_0^1}\sum_{\iota=\pm}\int_{\bbR_\iota^2}q_\beta(y-y')[G(y)-G(y')]^2dydy'
  \vphantom{\int_0^1}+{\frak g}_0
  \int_{\bbR_+\times\bbR_-}q_\beta(y-y')[G^2(y)+ G^2(y')]dydy'
  \nonumber\\
&
\\
&
+\int_{\bbR_+\times\bbR_-}q_\beta(y-y')\left\{p_+[G(y)-G(y')]^2+p_-[G(y)-G(-y')]^2\right\}dydy'
.\nonumber
\end{align}
Note that the term in the last line above, with $p_+$ and $p_-$, is dominated by the
term with the factor of ${\frak g}_0$.

Since $q_\beta(y)\sim |y|^{-2-1/\beta}$, finitness of this norm forces $G(y)$ to decay to $0$ at a certain rate, as $y\to 0$.
% We keep it in the definition of the norm
% so that identity (\ref{apr210}) below holds.  
%The first condition above enforces the regularity of $G$ and the second is a form of the boundary condition $G(0)=0$. 
Let us define the class of function 
\begin{equation}
  \label{eq:1}
  \frak H_T := \{ \exists\; T': \; W - T' \in C([0,+\infty), L^2(\bbR)), W - T \in  L^2_{\rm loc}([0,+\infty),{\cal H}_0)\}
\end{equation}
Clearly if $W\in \frak H_T$, then $W(t,y) \to T$, as $y\to 0$ for almost every $t\ge 0$.

\begin{prop}\label{thm:unique}
A weak solution of \eqref{021803-19xx} is unique in $\frak H_T$. 
\end{prop}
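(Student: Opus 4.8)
The plan is to prove uniqueness by an $L^2$ energy estimate, exploiting that the nonlocal operator on the right-hand side of \eqref{013101-19x} is the generator of the symmetric, nonnegative Dirichlet form encoded in the norm \eqref{Hnorm}. \emph{Reduction.} Let $W_1,W_2\in\frak H_T$ be two weak solutions of \eqref{021803-19xx} with the same datum $W_0$, and set $V:=W_1-W_2$. Since every term in \eqref{013101-19x} and \eqref{021803-19xx} depends affinely on $W$, with the inhomogeneity entering only through the constant $T$, the difference $V$ is a weak solution of the same equation with $T$ and $W_0$ both replaced by $0$. Because the two solutions are driven by the same data they share the same far-field constant, so $V$ lies in $\frak H_0$, the $T=0$ instance of \eqref{eq:1}; that is, $V\in C([0,+\infty),L^2(\bbR))\cap L^2_{\rm loc}([0,+\infty),{\cal H}_0)$, $V(0,\cdot)=0$, and $V(t,\cdot)\to0$ at the interface for a.e.\ $t$. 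The goal is to show $V\equiv0$.

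\emph{The Dirichlet form.} For $u,v\in C_0^\infty(\bbR_*)$ I would symmetrize the singular integral $\hat c\Lambda_\beta$ together with the interface integrals of \eqref{021803-19xx}: splitting each double integral according to the signs of $y$ and $y'$ and using that $q_\beta$ is even, one checks that
\[
-\int_{\bbR}\big(\hat c\,\Lambda_\beta u - R_{\rm int}u\big)\,u\,dy=\cE(u,u),
\]
where $R_{\rm int}$ denotes the interface part of the generator and $\cE$ is a symmetric bilinear form whose quadratic part is a positive combination of exactly the three groups appearing in \eqref{Hnorm}. In particular $\cE(u,u)\ge0$ — this is precisely the dissipativity of the generator of the killed, reflected--transmitted stable process — and $\cE(u,u)$ is comparable to $\|u\|_{{\cal H}_0}^2$, so $\cE$ extends to a closed, nonnegative form on ${\cal H}_0$ with $|\cE(u,v)|\le C\|u\|_{{\cal H}_0}\|v\|_{{\cal H}_0}$.

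\emph{Energy identity.} The weak formulation \eqref{021803-19xx} for $V$ then reads $\int_0^{t_0}\langle V,\partial_tG\rangle\,dt-\int_0^{t_0}\cE(V,G)\,dt=\langle V(t_0),G(t_0)\rangle$ for every admissible $G$. Two facts let me upgrade this to an estimate against $V$ itself. First, $C_0^\infty(\bbR_*)$ is dense in ${\cal H}_0$: finiteness of the ${\frak g}_0$-term in \eqref{Hnorm} forces the trace at $y=0$ to vanish, so a cutoff removing a shrinking neighborhood of the interface produces an error tending to $0$ in ${\cal H}_0$; hence, by the previous step, the bilinear forms in the weak formulation are ${\cal H}_0$-continuous and the restriction that test functions avoid the interface is harmless. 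Second, reading off the weak formulation shows that $V$ lies in the Gelfand triple ${\cal H}_0\hookrightarrow L^2(\bbR)\hookrightarrow{\cal H}_0^*$ as a solution of the abstract Cauchy problem $V'=-\cE(V,\cdot)$, $V(0)=0$, with $V'\in L^2_{\rm loc}([0,+\infty),{\cal H}_0^*)$; a time mollification (Steklov average), whose commutator errors vanish by the $L^2$-continuity of $t\mapsto V(t)$, makes this rigorous. The standard integration-by-parts formula for such functions, $\tfrac12\tfrac{d}{dt}\|V\|_{L^2}^2=\langle V',V\rangle=-\cE(V,V)$, then yields
\[
\tfrac12\|V(t_0)\|_{L^2(\bbR)}^2+\int_0^{t_0}\cE(V(t),V(t))\,dt=\tfrac12\|V(0)\|_{L^2(\bbR)}^2=0 .
\]
Since $\cE(V,V)\ge0$, both nonnegative terms on the left vanish, so $V(t_0,\cdot)=0$ in $L^2(\bbR)$ for every $t_0>0$, which is the claim.

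I expect the main obstacle to be the rigorous justification of testing the equation against the solution in the last step: the admissible test functions are smooth, compactly supported in time, and bounded away from the singular interface, whereas $V$ is none of these. Both gaps are bridged using the $\frak H_0$-regularity of $V$ — the enforced decay at $y=0$ makes the spatial cutoff error vanish and guarantees ${\cal H}_0$-continuity of the interface forms, while the $L^2$-continuity in time controls the mollification error. The delicate point is to verify that no contribution concentrated at the interface is lost in these two limits, which is exactly what the structure of the norm \eqref{Hnorm} is designed to prevent.
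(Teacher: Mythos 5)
Your proposal is correct and follows essentially the same route as the paper: take the difference of two solutions, observe it solves the $T=0$ problem in $C([0,\infty),L^2)\cap L^2_{\rm loc}([0,\infty),{\cal H}_0)$, approximate by admissible test functions to obtain the energy identity $\frac{d}{dt}\|\bar W(t,\cdot)\|_2^2=-\hat c\|\bar W(t,\cdot)\|_{{\cal H}_0}^2$, and conclude. The paper states the approximation step in one line; your write-up merely fills in the details (density of $C_0^\infty(\bbR_*)$ in ${\cal H}_0$, the Gelfand-triple/Steklov-average argument) that the authors leave implicit.
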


\begin{proof}
In fact, let $\bar W$ be the  difference of two weak solutions in $\frak H_T$. Then $\bar W(t,y)$ is in the space
$$
C([0,+\infty),L^2(\bbR))\cap L^2_{\rm loc}([0,+\infty),{\cal H}_0),
$$
and satisfies \eqref{021803-19xx} for $T=0$.
Approximating $\bar W$ by test functions $G$ in \eqref{021803-19xx} we obtain the identity
\begin{equation}\label{apr210}
\begin{aligned}
&\frac{d}{dt}\|\bar W(t,\cdot)\|_2^2=
-\hat c\|\bar W(t,\cdot)\|_{{\cal H}_0}^2.
\end{aligned}
\end{equation}
%Note that (\ref{apr412}) and the Cauchy-Schwartz inequality allowed us to symmetrize expressions in the third line in the right side above. 
Identity (\ref{apr210}) immediately implies uniqueness of the solutions to \eqref{021803-19xx} in the corresponding space.
\qed
\end{proof}

In Section \ref{sec:proof-theor-refthm01} we prove the following.
\begin{thm}\label{thm012205-19}
  Suppose that $W_0\in {\cal C}_T$ and there exists a
  constant $T'$, so that $W_0-T'\in L^1(\bbR\times \bbT)$. Let $W$
  be the limit of the solutions of the scaled kinetic equation
  described in Theorem $\ref{main-thm}$. Then $W$ belongs to $\frak H_T$.
\end{thm}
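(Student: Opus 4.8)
The plan is to deduce the two defining properties of $\frak H_T$ from a single energy estimate for a regularized version of the limit process. First I would record the elementary reduction that, since $W_0\in{\cal C}_T$ is bounded and $W_0-T'\in L^1(\bbR\times\bbT)$, the averaged datum satisfies $\bar W_0-T'\in L^1(\bbR)\cap L^\infty(\bbR)\subset L^2(\bbR)$, so that the initial datum is square integrable relative to the far-field value $T'$. The essential structural point is that the interface value $T$ and the far-field value $T'$ need not coincide: $W-T'$ is the quantity that must decay in $L^2(\bbR)$, whereas $W-T$ is the quantity that must vanish at the interface and carry finite ${\cal H}_0$ energy, since the norm \eqref{Hnorm} only registers differences away from $y=0$ and penalizes nonzero interface values through the ${\frak g}_0$ term. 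One therefore cannot work by subtracting a single constant.

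The weak formulation of Definition \ref{df011803-19} uses test functions supported away from the interface and so cannot be tested against $W-T'$, while $W$ is known only through the representation $W(t,y)=\E[\bar W_0(\zeta(t))\mid\zeta(0)=y]$ from Theorem \ref{main-thm}. To obtain a usable identity I would introduce a regularized process $\zeta_a$, obtained by truncating the singular kernel $q_\beta$ at scale $a$ while retaining the reflection--transmission--absorption rule at $y=0$. Its generator $-\hat c{\cal L}_a$ is then bounded on $L^2(\bbR)$, the associated Dirichlet form defines norms $\|\cdot\|_{{\cal H}_{0,a}}$ increasing to $\|\cdot\|_{{\cal H}_0}$, and $W_a(t,y)=\E[\bar W_0(\zeta_a(t))\mid\zeta_a(0)=y]$ is a genuine strong $L^2$ solution. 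Differentiating $\|W_a(t)-T'\|_2^2$ and writing $W_a-T'=(W_a-T)+(T-T')$, the fact that $W_a-T$ solves the homogeneous (zero interface value) equation whose Dirichlet form is $\tfrac12\|\cdot\|_{{\cal H}_{0,a}}^2$ yields an identity of the form
\[
\tfrac12\frac{d}{dt}\|W_a(t)-T'\|_2^2+\tfrac{\hat c}{2}\|W_a(t)-T\|_{{\cal H}_{0,a}}^2=-\hat c\,(T-T')\int_{\bbR}{\cal L}_a\big(W_a(t)-T\big)\,dy,
\]
in which the right-hand side is supported near the interface through the killing density. After controlling this cross term one obtains the energy estimate \eqref{022003-19zz1}, namely a bound on $\sup_{t\le t_0}\|W_a(t)-T'\|_2^2+\int_0^{t_0}\|W_a(t)-T\|_{{\cal H}_{0,a}}^2\,dt$ that is uniform in $a$.

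I would then let $a\to0^+$. Convergence in law $\zeta_a\to\zeta$ together with the boundedness and almost-everywhere continuity of $\bar W_0$ gives $W_a(t,y)\to W(t,y)$, and the uniform bound together with the lower semicontinuity of $\|\cdot\|_2$ and, via the monotone convergence ${\cal H}_{0,a}\nearrow{\cal H}_0$, of $\|\cdot\|_{{\cal H}_0}$ transfers the estimate to the limit: $\sup_t\|W(t)-T'\|_2^2<\infty$ and $\int_0^{t_0}\|W(t)-T\|_{{\cal H}_0}^2\,dt<\infty$ for every $t_0$. The second bound is exactly $W-T\in L^2_{\rm loc}([0,\infty),{\cal H}_0)$. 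To upgrade the $L^\infty_tL^2$ bound to continuity, I would pass the energy identity itself to the limit, so that $t\mapsto\|W(t)-T'\|_2^2$ is absolutely continuous; combined with the weak $L^2$ continuity inherited from the equation, this gives strong continuity, that is $W-T'\in C([0,\infty),L^2(\bbR))$. The two properties together place $W$ in $\frak H_T$.

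The main obstacle is the energy estimate in the presence of the interface and of the two distinct reference constants $T\neq T'$. Subtracting $T'$ keeps the solution in $L^2$ but makes $W_a-T'$ fail to vanish at the interface, so the dissipation must be read off from $W_a-T$, which is what produces the interface cross term above; estimating this term uniformly in the regularization parameter $a$ --- so that it is absorbed by the dissipation rather than dominating it --- is the delicate step, as is verifying that the truncated forms $\|\cdot\|_{{\cal H}_{0,a}}$ approximate $\|\cdot\|_{{\cal H}_0}$ compatibly with the lower-semicontinuity passage to the limit. By contrast, the $L^1\cap L^\infty\subset L^2$ reduction, the convergence $W_a\to W$ from convergence in law, and the weak-plus-norm continuity upgrade are comparatively routine.
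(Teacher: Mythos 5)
Your overall architecture coincides with the paper's: regularize the jump kernel at scale $a$, derive an energy identity for the regularized semigroup, use the monotonicity $\|G\|_{{\cal H}_a}\le\|G\|_{{\cal H}_{a'}}$ for $a>a'$ together with lower semicontinuity to send $a\to0^+$, and read off the two memberships defining $\frak H_T$. The reduction $\bar W_0-T'\in L^1\cap L^\infty\subset L^2$ and the convergence $W^{(a)}\to W$ from convergence in law of the regularized processes are also exactly as in the paper, which simply normalizes $T'=0$ at the outset.

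The gap is in the one step you yourself flag as delicate: the interface cross term, which you propose to absorb into the dissipation. Only the killing part of the generator contributes to $\int_{\bbR}{\cal L}_a(W^{(a)}-T)\,dy$, so your cross term has the form $\hat c\,(T-T')\,{\frak g}_0\int_{[yy'<0]}q^{(a)}_\beta(y-y')\bigl(W^{(a)}(t,y')-T\bigr)\,dy\,dy'$. Any Young or Cauchy--Schwarz splitting that places the square $(W^{(a)}-T)^2$ against the ${\frak g}_0$-part of the dissipation leaves behind a multiple of $(T-T')^2{\frak g}_0\int_{[yy'<0]}q^{(a)}_\beta(y-y')\,dy\,dy'$, and this kernel mass equals a constant times $\int_a^{\infty}u\,q_\beta(u)\,du\sim a^{-1/\beta}$, which diverges as $a\to0^+$. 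So the absorption route cannot yield an estimate uniform in $a$, and the passage to the limit breaks down. The paper sidesteps this entirely: integrating the identity in time turns the cross term into $2T\bigl(\int_{\bbR}W^{(a)}(t,y)\,dy-\int_{\bbR}\bar W_0(y)\,dy\bigr)$, a pure mass difference with no kernel integral (see \eqref{022003-19zz}--\eqref{022003-19zz1}), and then bounds $\|W^{(a)}(t)\|_{L^1}$ through the probabilistic representation \eqref{Wtya}, giving $\|W^{(a)}(t)\|_{L^1}\le 2\|\bar W_0\|_{L^1}+2T\,\bbE\bigl[\sup_{s\in[0,t]}\eta_a(s;0)\bigr]$, the last expectation being finite uniformly in $a$ by Doob's maximal inequality, a uniform $\kappa$-moment bound with $1<\kappa<1+1/\beta$, and self-similarity of the limiting stable process. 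That $L^1$/running-maximum input is the missing idea; without it, or a substitute for it, your energy estimate is not uniform in the regularization parameter and the lower-semicontinuity step has nothing to act on.
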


\section{Probabilistic representation for a solution to the kinetic
  equation with an interface}\label{sec3}

We now construct a probabilistic interpretation for the kinetic equation with reflection, transmission and absorption at an interface,
as a   generalization of the corresponding jump process without an interface. 
Let $(\Om,{\cal F},\bbP)$ be a probability space
and $\mu$ be a Borel  measure on $\bbT$ given by 
\begin{equation}\label{barR}
\mu(dk)=\frac{R(k)dk}{\bar R},~~
\bar R=\int_{\bbT}R(k)dk.
\end{equation}
We denote by
$(K_n^k)_{n\ge0}$  a Markov chain 
such that $\bbP[K_0^k=k]=1$, with the
transition operator 
\begin{equation}
\label{trp}
Pf(k)=\int_{\bbT}p(k,k')f(k')\mu(dk'),\quad k\in\bbT,\,f\in L^\infty(\bbT).
\end{equation}
Here, $K_n^k$ are the particle momenta between the jump times. 
The measure $\mu$
is ergodic and invariant under $P$, and the operator $P$ can be extended
to~$L^1(\mu)$ and  
\begin{equation}
\label{021102-19}
\|Pf\|_{L^\infty(\mu)}\le \|p \|_{L^\infty}\|f\|_{L^1(\pi)},\quad f\in L^1(\mu).
\end{equation}
The transition operator is  symmetric on $L^2(\mu)$. Since the
transition probability density is strictly positive $\mu\otimes\mu$ a.s.,  
the operator satisfies 
the spectral gap estimate
  \begin{equation}
    \label{eq:8x}
    \sup[\|Pf\|_{L^2(\mu)}:\,f\perp 1,\,\|f\|_{L^2(\mu)}=1]<1.
  \end{equation}
%In addition, since $1$ is a simple eigenvalue for a symmetric,
%negative definite operator $L$ 
We can easily conclude the following  -- see \eqref{L} for the definition
of the operator $L$. %{\bf What is $L$? Is it $P$? How can a negative definite operator have eigenvalue $1$?}
\begin{prop}
\label{prop011102-19}
For any $F\in L^1(\bbT) $ we have
\begin{equation}
\label{etL}
\lim_{t\to+\infty}\|e^{tL}F-\int_{\bbT}F(k)dk\|_{L^1(\bbT)}=0.
\end{equation}
\end{prop}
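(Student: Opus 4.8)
The plan is to exploit the self-adjointness of $L$ on $L^2(\bbT,dk)$ together with the spectral theorem. The conceptual point is that the degeneracy $R(k)\to 0$ as $k\to0$ prevents any spectral gap for $L$, so one cannot hope for an exponential rate and must argue \emph{without} one; in particular the gap estimate \eqref{eq:8x} for the embedded chain will play no role here.

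First I would record the functional-analytic structure of $L$. Writing $LF(k)=\int_{\bbT}R(k,k')F(k')\,dk'-R(k)F(k)$ and using the symmetry $R(k,k')=R(k',k)$, the operator $L$ is symmetric and non-positive on $L^2(\bbT,dk)$, with Dirichlet form $\langle -LF,F\rangle=\tfrac12\int_{\bbT^2}R(k,k')[F(k)-F(k')]^2\,dk\,dk'$. Since $R(k,k')>0$ for $k,k'\neq0$ by \eqref{pos}, this form vanishes only on constants, so the range of the spectral projection $E(\{0\})$ of the resolution $L=\int\lambda\,dE_\lambda$ is exactly the one-dimensional space of constant functions, and that projection is $F\mapsto \bar F:=\int_{\bbT}F\,dk$ (recall $|\bbT|=1$). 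Because the total rate $R(k)$ is bounded and continuous, $L$ generates a pure-jump Markov semigroup $e^{tL}$ with a symmetric transition density $r_t(k,k')\ge0$ satisfying $\int_{\bbT}r_t(k,k')\,dk'=1$; this makes $e^{tL}$ a contraction on both $L^\infty(\bbT)$ and $L^1(\bbT)$, and shows that it conserves the mean, $\overline{e^{tL}F}=\bar F$.

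Next I would prove the claim for $F\in L^2(\bbT)$ by the spectral theorem: $\|e^{tL}F-\bar F\|_{L^2}^2=\int_{(-\infty,0)}e^{2t\lambda}\,d\langle E_\lambda F,F\rangle\to0$ as $t\to+\infty$ by dominated convergence, since the integrand is bounded by $1$ and tends to $0$ for every $\lambda<0$ (the point mass at $\lambda=0$ having been removed by subtracting $\bar F=E(\{0\})F$). As $\bbT$ has finite measure this already gives $\|e^{tL}F-\bar F\|_{L^1}\to0$ for $F\in L^2$. Finally I would upgrade to an arbitrary $F\in L^1(\bbT)$ by density: given $\varepsilon>0$ pick $G\in L^2(\bbT)$ with $\|F-G\|_{L^1}<\varepsilon$, and estimate $\|e^{tL}F-\bar F\|_{L^1}\le\|e^{tL}(F-G)\|_{L^1}+\|e^{tL}G-\bar G\|_{L^1}+|\bar G-\bar F|$, where the first term is $\le\|F-G\|_{L^1}<\varepsilon$ by the $L^1$-contraction, the last is $\le\|F-G\|_{L^1}<\varepsilon$, and the middle term tends to $0$ by the previous step; letting $t\to+\infty$ and then $\varepsilon\to0$ concludes.

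The only genuinely delicate point is thus conceptual rather than computational: \emph{there is no spectral gap for $L$}, so the convergence is not exponential and cannot be deduced from \eqref{eq:8x}; the spectral theorem nonetheless delivers convergence (without rate) because the spectral mass at $0$ is precisely the projection onto constants, which is guaranteed by the irreducibility coming from \eqref{pos}. The remaining verifications are routine: positivity, the identity $e^{tL}1=1$, and the $L^1$/$L^\infty$ contractivity of the degenerate jump semigroup, all of which follow from the Dyson--Duhamel expansion $e^{tL}=e^{-tR}+\int_0^t e^{-(t-s)R}A\,e^{sL}\,ds$, where $A$ is the non-negative integral operator with kernel $R(k,k')$ and $e^{-tR}$ is multiplication by $e^{-tR(k)}\in[0,1]$.
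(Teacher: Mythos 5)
Your proof is correct, and it is worth noting that the paper itself offers no argument here: Proposition \ref{prop011102-19} is introduced with ``we can easily conclude the following'' immediately after the spectral gap estimate \eqref{eq:8x}, which suggests the authors had in mind the ergodicity of the embedded chain with transition operator $P$ on $L^2(\mu)$. Your route is genuinely different and, in my view, cleaner for this particular statement: you work directly with the generator $L$ as a bounded self-adjoint non-positive operator on $L^2(\bbT,dk)$ (using \eqref{sym}), identify $\ker L$ with the constants via the Dirichlet form and \eqref{pos}, apply the spectral theorem with dominated convergence to get $L^2$-convergence without any rate, and then upgrade to $L^1$ by the Markov contraction property and density. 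Your conceptual remark is also accurate and is the one point that makes the ``easy'' claim slightly misleading: since $R(k)\to 0$ as $k\to 0$, functions concentrated near $k=0$ show that $-L$ has no spectral gap on $\{1\}^{\perp}$ in $L^2(dk)$, so the gap \eqref{eq:8x} for $P$ on $L^2(\mu)$ (a different operator, symmetric with respect to the $R$-weighted measure) cannot be transferred to an exponential rate in \eqref{etL}; convergence nonetheless holds because the spectral mass at $0$ is exactly the projection onto constants. The only cosmetic imprecision is your reference to a ``symmetric transition density'' for $e^{tL}$ --- the transition kernel carries an atom $e^{-tR(k)}\delta_k$ --- but the properties you actually use (positivity preservation, $e^{tL}1=1$, conservation of the mean, and contractivity on $L^1$ and $L^\infty$) all follow from the Duhamel expansion you write down, so this does not affect the argument.
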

Next, let $\tau_n$, $n\ge 1$, be a sequence of i.i.d. $\hbox{exp}(1)$
distributed random variables
and $\left(\fT(t)\right)_{t\ge0}$ be a linear interpolation between the times 
\begin{equation}
\label{010502-19}
\fT(n):=\sum_{\ell=0}^{n}\bar t(K_\ell^k)\tau_\ell,\quad n=0,1,\ldots,
\end{equation}
where
\begin{equation}
\label{bart}
\bar t(k):=\frac{1}{\ga_0 R(k)}.
\end{equation}
That is, $\fT(n)$ is the time of the $n$-th jump, and the elapsed times between the consecutive jumps are  $\bar t(K_\ell^k)\tau_\ell$. 
Between the jumps the particle moves with the constant speed $\omega'(K_\ell^k)$, and
the corresponding spatial position
$\tilde Z(t;y,k)$ 
is the linear interpolation  between its locations at the jump times 
\begin{equation}
\label{100401-19}
Z_{n}(y,k):=y-\sum_{\ell=0}^{n}\bar\om'(K_\ell^k)\bar t(K_\ell^k)\tau_\ell,\quad
n=0,1,\ldots 
\end{equation}
%with the convention $\bar\om'(0)\bar t(0)=0$. 
Observe that there exists a
constant $c_+>0$ such that
\begin{equation}
\label{cp}
|\bar\om'(k)\bar t(k)|\le \frac{c_+\om_0'}{\ga_0 R_0|k|^{\beta}},\quad k\in\bbT.
\end{equation}
% In addition\begin{equation}
% P(k,[\bar\om'\bar t\in A])=0,\quad\mbox{for all }k\in \bbT,\mbox{ and }A\in {\cal
%   B}(\bbT)\mbox{ such that }m_1(A)=0,
% \label{non-sing}
% \end{equation}
%\begin{lemma}
%\label{lm1}
Note that for each $n\ge0$ and $(y,k)\in\bbR\times \bbT_*$
the law of $Z_n(y,k)$ is absolutely continuous with respect to the Lebesgue measure on the line. 
%\end{lemma}
%{\bf Proof.} 
%Indeed, this is true for $n=0$, thanks to the absolute
%continuity of the transition probability kernel \eqref{trp}. Suppose that it holds for some $n$. Let a set $A$ have zero Lebesgue measure,
%then
%%for any $A\in{\cal B}(\bbT)$ that has zero Lebesgue measure we have
%\begin{equation}
%\label{null}
%\bbE\left[ 1_A\left(Z_{n+1}(y,k)\right)\right]= 
%%\end{equation}
%%Note that the expression on the left hand side equals
%%\begin{align}
%%\label{062311-18}
%%&
%\bbE\Big[ \int_\bbT
%  1_A\left(Z_n(y,k)+\bar\om'\bar t (k')\right)p(K_n^k,k')\mu(dk')\Big].
%\end{equation}
%However, we have
%$$
%\int_\bbT
%  1_A\left(y+\bar\om'\bar t (k')\right)p(k,k')\mu(dk')=0\quad \mbox{for each $k\in \bbT$
%    and $y\in\bbR$.}
%$$
%This, in turn implies that the expression in \eqref{062311-18} vanishes.
%Thus, \eqref{null} holds. 
%%\qed

We also note that
\begin{equation}
\label{020502-19}
 Y(t;y,k):=\tilde Z\left(\fT^{-1}(t);y,k\right)=y-\int_0^t\bar\om'(K(s;k))ds,\quad K(t;k):=K^k_{[\fT^{-1}(t)]},
\end{equation}
%where $\fT^{-1}(t)$ 
%is the inverse of $\fT(t)$,
%Note that
%$$
%Y(t;y,k):=\tilde Z\left(\fT^{-1}(t);y,k\right),\quad t\ge0.
%$$
and denote by $ {\cal F}_t $ the natural filtration for the process~$ Y(t;y,k), K(t;k)) $.

\subsection*{The jump process with reflection and transmission}

We now add reflection and transmission to the  jump process. 
Suppose that the starting point~$y>0$ and let  %. Let ${\frak s}_{y,1}:=0$ and
\begin{equation}
\label{010501-19a}
{\frak s}_{y,1}:=\inf[n>0:\,Z_n(y,k)<0],\quad 
{\frak s}_{y,2}:=\inf[n>{\frak s}_{y,1}:\, Z_n(y,k)>0]
\end{equation} 
be the first times of the momenta jumps after the first crossing to the left and after the first crossing back to the right. 
Having defined ${\frak s}_{y,2m-1}$, ${\frak s}_{y,2m}$ for some $m\ge1$ we let
\begin{align}
\label{010501-19}
{\frak s}_{y,2m+1}:=\inf[n>{\frak s}_{y,2m}:\,   Z_n(y,k)<0],\quad 
 {\frak s}_{y,2m+2}:=\inf[n>{\frak s}_{y,2m+1}:\, Z_n(y,k)>0].
\end{align}
We define ${\frak s}_{y,m}$ by symmetry also for $y<0$.
We also let
$$
\tilde {\frak s}_{y,1}:=\inf[t>0:\,\tilde Z(t;y,k)<0],\quad 
\tilde {\frak s}_{y,2}:=\inf[t>{\frak s}_{y,1}:\, \tilde Z(t;y,k)>0]
$$  
be the times when the trajectory crosses to the left and then crosses back to the right. 
Having defined $\tilde {\frak s}_{y,2m-1}$, $\tilde {\frak s}_{y,2m}$ for some $m\ge1$, we set
\begin{align}
\label{010501-19am}
\tilde{\frak s}_{y,2m+1}:=\inf[t>\tilde {\frak s}_{y,2m}:\,  \tilde Z(t;y,k)<0],\quad 
 \tilde {\frak s}_{y,2m+2}:=\inf[t>\tilde{\frak s}_{y,2m+1}:\, \tilde Z(t;y,k)>0].
\end{align}
and. again, by symmetry we define $\tilde{\frak s}_{y,m} $
for $y<0$.
Obviously, we have 
$$
\tilde{\frak s}_{y,m}<{\frak s}_{y,m}<\tilde{\frak s}_{y,m+1},\quad \mbox{a.s.}
$$

We let $\hat\si_m^{y}$ be a $\{-1,0,1\}$-valued sequence of random
variables that are independent, when conditioned on $(K_n^k)_{n\ge0}$,
such that
$$
\bbP[\hat\si_m^y=0|(K_n^k)_{n\ge0}]= \frak g(K^k_{ {\frak s}_{y,m}-1}),\quad \bbP[\hat\si_m^y=\pm1|(K_n^k)_{n\ge0}]=p_\pm(K_{{\frak s}_{y,m}-1}^k).
$$
% $$
% \tilde\si_m^y=\left\{
% \begin{array}{cl}
% 1,& \mbox{when }\hat\si_m^y=0,1,\\
% &\\
% -1,& \mbox{when }\hat\si_m^y=-1
% \end{array}
% \right.
% $$
% and
 These variables are responsible for whether the particle is reflected, transmitted or absorbed as it crosses the interface, and 
 \begin{equation}
\label{frakf}
{\frak f}:=\inf\big[m\ge 1:\,\hat \sigma^y_m=0\big]
\end{equation}
 is
the crossing at which the particle is absorbed. 
For $m\ge1$, we denote by ${\frak F}_m$ the $\si$-algebra generated by $(Y(t;y,k),
K(t;k))$, $0\le t\le \tilde {\frak s}_{y,m}$ and $\hat \si_\ell^y$,
$\ell=1,\ldots,m$, with the convention that~${\frak F}_0$ is the trivial $\si$-algebra.
Recall that $\left({\cal F}_t\right)_{t\ge0}$ is the natural filtration
corresponding to the process  $\left(Y(t;y,k),
K(t;k)\right)_{t\ge0}$.

We define the reflected-transmitted-absorbed process
$$
\tilde Z^o(t;y,k):=\Big(\prod_{m=1}^{n-1}\hat\si_m^y\Big) \tilde
Z(t;y,k),\quad t\in[\tilde{\frak s}_{y,n-1},\tilde{\frak s}_{y,n}),
$$
with the convention that the product over an empty set of indices
equals $1$
% and
%  the chain
% $$
% K_{\ell}^{k,o}:=\left(\prod_{m=1}^{n-1}\hat\si_m^y\right)
% K_{\ell}^{k},\quad \ell\in\{{\frak s}_{y,n-1},\ldots,{\frak s}_{y,n}-1\}
% $$
and the respective counterparts
$$
 Y^o(t;y,k):=\tilde
 Z^o(T^{-1}(t);y,k)=\Big(\prod_{m=1}^{n-1}\hat\si_m^y\Big)
 Y(t;y,k),\quad  t\in[\tilde{\frak s}_{y,n-1},\tilde{\frak s}_{y,n})
$$
and
$$
K^o(t;k) :=\Big(\prod_{m=1}^{n-1}\hat\si_m^y\Big) K(t;k),\quad t\in[\tilde{\frak s}_{y,n-1},\tilde{\frak s}_{y,n}).
$$
In what follows, we assume the convention that $\bar\om'(0):=0$ even though $\omega(k)$
is not differentiable at $k=0$.
For $t\in (\tilde{\frak s}_{y,n-1},\tilde{\frak s}_{y,n})$ we can
write  
\begin{align}
\label{040902-19}
&
\frac{d Y^o(t;y,k)}{dt}:=\Big(\prod_{m=1}^{n-1}\hat\si_m^y\Big) 
\frac{d Y(t;y,k)}{dt}
=-\Big(\prod_{m=1}^{n-1}\hat\si_m^y\Big) 
\om'(K(t;k))\nonumber\\
&
=-\om'\Big(\Big(\prod_{m=1}^{n-1}\hat\si_m^y\Big) K(t;k)\Big)=-\om'( K^o(t;k)).
\end{align}
 % Note that
% \begin{align*}
% %&(Y^o(t;y,k),
% %K^o(t;k))=(Y(t;y,k),
% %K(t;k)),\quad t\in[\tilde {\frak s}_{y,0},\tilde {\frak s}_{y,1}),\\
% &
% Y^o(t;y,k)=\left(\prod_{m=1}^{n-1}\hat\si_m^y\right) Y(t;y,k),\\
% &
% K^o(t;k)=\left(\prod_{m=1}^{n-1}\hat\si_m^y\right)K(t;k),\quad t\in[\tilde {\frak s}_{y,n-1},\tilde {\frak s}_{y,n}),\quad n=1,2,\ldots.
% \end{align*}
If $Y^o(t;y,k)\ge 0$ for  $\tilde {\frak s}_{y,m-1}< t<\tilde
{\frak s}_{y,m+1}$ -- that is, the particle approached the interface from the right at the time $\tilde {\frak s}_{y,m}$ 
and was reflected, then 
 for $h>0$ we define $\tilde {\frak s}_{y,m}^h\in(\tilde {\frak s}_{y,m-1},\tilde{\frak s}_{y,m})$ 
 as the first exit  time of $Y^o(t;y,k)$ from the
half-line $[y>h]$ that happens after~$\tilde {\frak s}_{y,m-1}$, 
and~$\tilde {\frak s}_{y,m}^{h,e}\in(\tilde {\frak s}_{y,m},\tilde{\frak s}_{y,m+1})$
as the  first exit  time of $Y^o(t;y,k)$ from the
half-line $[y<h]$, after~$\tilde {\frak s}_{y,m}$. Note that both $\tilde {\frak s}_{y,m}^h$ and $\tilde {\frak s}_{y,m}^{h,e}$
are finite a.s. if $h>0$ is sufficiently small, and we have
%Since  $Y^o(t;y,k)$ for  $\tilde {\frak s}_{y,m-1}\le  t<\tilde
%{\frak s}_{y,m+1}$ is piecewise linear we have
$$
\lim_{h\to0+}\tilde{\frak s}_{y,m}^h=\lim_{h\to0+}\tilde{\frak s}_{y,m}^{h,e}=\tilde{\frak s}_{y,m},\quad {\rm a.s.}
$$
Analogous definitions can be introduced for all other configurations of the signs of~$Y^o(t;y,k)$
in~$\tilde {\frak s}_{y,m-1}< t<\tilde
{\frak s}_{y,m+1}$.

\subsection*{A probabilistic representation for  the kinetic equation} %(\ref{eq:8})}

We will now prove the following. 
\begin{prop}
\label{prop010701-19a}
If $ W(t,y,k)$ is a solution to \eqref{eq:8}, with
the interface condition~\eqref{feb1408}-\eqref{feb1410}, in the sense of Definition $\ref{df013001-19}$, such that
$ W(0,y,k) =W_0(y,k)$ and  $ W_0\in {\cal C}_T$, then 
\begin{align}
\label{010701-19a}
& W(t,y,k)=\bbE\left[W_0\left(Y^o( t;y,k), K^o( t;k)\right),\,
  t< \tilde  {\frak s}_{y,{\frak f}}\right]+T\bbP\left[
  t\ge \tilde  {\frak s}_{y,{\frak f}}\right],~~t\ge 0,~y\in\Rm_*,~k\in\Tm_*. %\quad (t,y,k)\in\bar\bbR_+\times\bbR_*\times\bbT_*.
\end{align}
\end{prop}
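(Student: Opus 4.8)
The plan is to verify that the right-hand side of \eqref{010701-19a} defines a function that satisfies all the conditions of a classical solution in the sense of Definition \ref{df013001-19}, and then to invoke the uniqueness half of Proposition \ref{prop013001-19} (i.e., that the classical solution is unique) to conclude that it must coincide with $W$. Equivalently, and more directly, I would argue that the process $(Y^o(t;y,k),K^o(t;k))$ together with the absorption mechanism is precisely the Markov process whose Kolmogorov backward equation is \eqref{eq:8a} with boundary conditions \eqref{feb1408}--\eqref{feb1410}, so that the Feynman--Kac-type formula \eqref{010701-19a} is forced. The essential content is a stochastic-calculus verification: apply the appropriate Dynkin/It\^o formula to $W(t-s, Y^o(s;y,k),K^o(s;k))$ along the trajectory and show that it is a martingale up to the absorption time.

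Concretely, I would proceed as follows. First, between jump times and away from the interface, the pair $(Y,K)$ evolves by free transport in $y$ at speed $-\bar\om'(K)$ punctuated by jumps of $K$ governed by the kernel $p(k,k')$ at rate $\ga_0 R(k)$; by construction of $\fT$ and the $\tau_\ell$ in \eqref{010502-19}--\eqref{100401-19}, the generator of this process acting on a smooth test function is exactly $D_t + \ga_0 L_k$, with $D_t$ as in \eqref{Dt}. Hence, using \eqref{eq:8a} and the definition of $D_t$, the process $s\mapsto W(t-s, Y(s;y,k),K(s;k))$ is a local martingale on any interval that does not contain a crossing of the interface. Second, at each crossing time $\tilde{\frak s}_{y,m}$ I insert the random sign $\hat\si_m^y$: the key algebraic identity is \eqref{040902-19}, which says that multiplying the accumulated product $\prod \hat\si^y_m$ into the state is compatible with the evolution because $\om'$ is even. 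The interface conditions \eqref{feb1408}--\eqref{feb1410} are precisely the statement that the conditional expectation of $W_0$ evaluated at the reflected ($\hat\si=-1$), transmitted ($\hat\si=+1$), or absorbed ($\hat\si=0$, contributing $T$) outgoing state equals the incoming value of $W$; so $W$ is continuous across each crossing in the averaged sense, and the martingale property is preserved through $\tilde{\frak s}_{y,m}$ with the convention that $W$ is replaced by the boundary value $T$ on the absorption event.

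Putting these together, $M(s):=W(t-s,Y^o(s;y,k),K^o(s;k))\bone_{s<\tilde{\frak s}_{y,{\frak f}}}+T\,\bone_{s\ge \tilde{\frak s}_{y,{\frak f}}}$ is a bounded martingale on $[0,t]$; evaluating $\E[M(0)]=\E[M(t)]$ yields \eqref{010701-19a}, since $M(0)=W(t,y,k)$ (because $Y^o(0)=y$, $K^o(0)=k$, and no crossing has yet occurred) while $M(t)$ equals $W_0(Y^o(t),K^o(t))$ on $\{t<\tilde{\frak s}_{y,{\frak f}}\}$ by the initial condition \eqref{010102-19} and equals $T$ on $\{t\ge \tilde{\frak s}_{y,{\frak f}}\}$. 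The boundedness of $W$ and of $D_tW$ assumed in Definition \ref{df013001-19}, together with the a.s.\ finiteness and the $h\to 0^+$ limits of the crossing times, justifies the localization and the passage to the limit in the approximating exit times $\tilde{\frak s}_{y,m}^h,\tilde{\frak s}_{y,m}^{h,e}$.

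The main obstacle I expect is the careful handling of the interface crossings: the trajectory $\tilde Z(t;y,k)$ can in principle accumulate infinitely many crossings of $y=0$ in finite time, since the velocity $\bar\om'(k)$ does not vanish near $k=0$ while the scattering rate degenerates like $|k|^\beta$. Controlling this requires showing that, up to any fixed time $t$, only finitely many crossings contribute, or more precisely that the martingale argument survives summation over $m$. This is exactly why the approximating exit times $\tilde{\frak s}_{y,m}^h$ and $\tilde{\frak s}_{y,m}^{h,e}$ from the strips $\{|y|>h\}$ are introduced: one runs the martingale argument with the crossings regularized at distance $h$ from the interface, where the number of crossings in $[0,t]$ is finite a.s., and then passes to the limit $h\to 0^+$ using the stated a.s.\ convergence of these times to $\tilde{\frak s}_{y,m}$ and dominated convergence (the integrand is bounded by $\|W\|_\infty\vee|T|$). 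Verifying that this limit is legitimate, and that the absorption probabilities assemble correctly into $\bbP[t\ge\tilde{\frak s}_{y,{\frak f}}]$ via the independence of the $\hat\si_m^y$ conditioned on $(K_n^k)$, is the technical heart of the argument.
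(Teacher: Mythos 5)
Your proposal is correct and follows essentially the same route as the paper: the generator identity between jump/crossing times makes $s\mapsto \widetilde W(t-s,Y(s;y,k),K(s;k))$ a martingale, the interface conditions supply the conditional-expectation matching at each crossing (split according to $\hat\si_m^y\in\{-1,0,+1\}$), the $h$-regularized exit times $\tilde{\frak s}_{y,m}^h$, $\tilde{\frak s}_{y,m}^{h,e}$ handle the passage to the limit at the crossings, and optional stopping at the a.s.\ finite absorption index yields \eqref{010701-19a} after reducing to $T=0$. One caution: your opening suggestion to instead verify that the right-hand side is a classical solution and invoke uniqueness would be circular, since the paper obtains uniqueness of classical solutions \emph{from} this proposition; the direct martingale argument you then carry out is the correct one.
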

\proof
First, let $\widetilde W(t,y,k)$ be a solution to \eqref{eq:8} 
as in Proposition~\ref{prop010701-19a} but with $T=0$ in
the interface conditions \eqref{feb1408}-\eqref{feb1410}.
We set
\begin{align}
\label{011303-19}
&{\cal M}_m:= %\sum_{j=0}^{m-1}Z_j=
\lim_{h\to0+} 1_{[{\frak f}>m-1]} \widetilde W\left( t-t\wedge \tilde  {\frak s}_{y,m+1}^h
,Y^o( t\wedge\tilde  {\frak s}_{y,m+1}^h;y,k), K^o( t\wedge\tilde  {\frak
  s}_{y,m+1}^h;k)\right)\nonumber\\
&
+1_{[{\frak f}\le m-1]} \widetilde W_0\left( Y^o( t;y,k), K^o( t;k)\right)-W(t,y,k)
\end{align}
and consider the increments
\begin{equation}\label{apr1504} 
\begin{aligned} 
&{\cal Z}_m:=\lim_{h\to0+}\left\{ \widetilde W\left( t-t\wedge \tilde  {\frak s}_{y,m+1}^h
,Y^o( t\wedge\tilde  {\frak s}_{y,m+1}^h;y,k), K^o( t\wedge\tilde  {\frak
  s}_{y,m+1}^h;k)\right)\right.\\
&
\left. -\widetilde W\left( t- t\wedge\tilde  {\frak s}_{y,m}^h
,Y^o(t\wedge\tilde  {\frak s}_{y,m}^h;y,k), K^o(t\wedge \tilde  {\frak
  s}_{y,m}^h;k)\right)\right\},\quad m=0,\ldots,{\frak
  f}-1,
\\
&{\cal Z}_{\frak f}:=-\lim_{h\to0+}\widetilde W\left( t-\tilde  {\frak s}_{y, {\frak f}}^h
,Y^o(\tilde  {\frak s}_{y, {\frak f}}^h;y,k), K^o(\tilde  {\frak
  s}_{y, {\frak f}}^h;k)\right) \mbox{ on the event } [{\frak s}_{y, {\frak
  f}}\le t],\\
&{\cal Z}_{\frak f}=0,\hbox{ on the event $[{\frak s}_{y, {\frak
  f}}>t]$},\\
  &{\cal Z}_m:=0, ~~m> {\frak f},
\end{aligned}
\end{equation}
so that
\begin{equation}\label{apr902}
{\cal M}_m:=\sum_{j=0}^{m-1}{\cal Z}_j.
\end{equation}
%and ${\cal Z}_{\frak f}=0$, if ${\frak s}_{y, {\frak
%  f}}>t$
%and ${\cal Z}_m:=0$,  $m> {\frak f}.$
The key step in the proof of Proposition~\ref{prop010701-19a} is the following lemma.
\begin{lemma}
\label{prop010501-19}
We have
\begin{equation}
\label{100501-19}
\bbE[{\cal Z}_m|{\frak F}_m]=0,\quad m=0,1,\ldots.
\end{equation}
\end{lemma}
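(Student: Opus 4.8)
The plan is to read \eqref{100501-19} as the statement that $\mathcal{M}_m=\sum_{j<m}\mathcal{Z}_j$ is a martingale for the filtration $(\mathfrak{F}_m)$, and to establish the single-step identity by separating two mechanisms: the evolution of $\widetilde W$ during the free flight inside a fixed half-line, and the matching of the traces of $\widetilde W$ across the interface encoded by \eqref{feb1408}--\eqref{feb1410} with $T=0$. On the event $[\mathfrak f>m]$ the increment $\mathcal{Z}_m$ is an interior increment, and on $[\mathfrak f=m]$ it is the terminal/absorption contribution; on $[\mathfrak f<m]$ it vanishes. The key is to show both surviving pieces have zero conditional mean.

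First I would establish the interior martingale property. The sign $\epsilon:=\prod_{\ell=1}^{m}\hat\si_\ell^y\in\{-1,0,1\}$ is constant along the excursion carried by $\mathcal{Z}_m$ and is $\mathfrak{F}_m$-measurable, so set $\widetilde W_\epsilon(t,y,k):=\widetilde W(t,\epsilon y,\epsilon k)$. Because $\om$ is even, $\bar\om'$ is odd with the convention $\bar\om'(0)=0$, and because the model is invariant under the reflection $(y,k)\mapsto(-y,-k)$ (so that $L_k$ commutes with $k\mapsto -k$), the reflected function $\widetilde W_\epsilon$ again solves the interior equation $D_t\widetilde W_\epsilon=\ga_0L_k\widetilde W_\epsilon$ of \eqref{eq:8a}; identity \eqref{040902-19} is exactly what guarantees that the reflected flight $(Y^o,K^o)$ is governed by $\widetilde W_\epsilon$. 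Writing $u(s,y,k):=\widetilde W_\epsilon(t-s,y,k)$ and letting $\mathcal A$ be the generator of the jump--transport process $(Y,K)$, one has $\partial_s u+\mathcal A u=-D_t\widetilde W_\epsilon+\ga_0L_k\widetilde W_\epsilon=0$, so by Dynkin's formula $s\mapsto u(s,Y(s;y,k),K(s;k))$ stopped at the clipping times $\tilde{\mathfrak s}^h$ is a bounded martingale. This is legitimate precisely because on $[|y|\ge h]$ the coefficients are non-degenerate and the classical solution is $C^1$ along characteristics, so the expected increment over the clipped excursion vanishes.

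The interface conditions enter only through the crossing. There the martingale property across $y=0$ follows by averaging the trace of $\widetilde W$ over the reflection/transmission/absorption alternatives, weighted by the conditional law of $\hat\si$ from its definition: for an incoming momentum $k>0$ one gets
\[
p_+(k)\widetilde W(t',0^-,k)+p_-(k)\widetilde W(t',0^+,-k)+{\frak g}(k)\cdot 0=\widetilde W(t',0^+,k),
\]
which is exactly \eqref{feb1408} with $T=0$, the absorption alternative contributing the killed value $0$; reflection flips the momentum while keeping $Y^o$ on the same side, transmission keeps the momentum and sends $Y^o$ across, consistently with \eqref{040902-19}. Combining the strong Markov property at the clipping times with the interior martingale property and this trace identity gives the zero conditional mean of the non-absorbed increment, and the same bookkeeping makes the terminal contribution $\mathcal{Z}_{\frak f}$ have zero conditional mean, so that \eqref{100501-19} holds for the clipped increments.

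Finally I would pass to the limit $h\to0^+$. Since $\widetilde W$ is bounded, dominated convergence together with the a.s. convergence $\tilde{\mathfrak s}^h\to\tilde{\mathfrak s}$ transfers the identity from the clipped increments to $\mathcal{Z}_m$ itself, yielding \eqref{100501-19}. I expect the main obstacle to lie exactly in this limit at the interface in the presence of the degeneracy $R(k)\sim|\sin(\pi k)|^{\beta}$ and $\bar\om'(0)=0$ at zero frequency: one must verify that the crossing and clipping times are finite a.s.\ and converge, that no probability mass is lost at $k=0$ where the flight speed degenerates, and that Dynkin's formula remains valid up to these stopping times. Controlling the time the process spends near $k=0$, using the bound \eqref{cp} together with the spectral gap \eqref{eq:8x}, is the technical heart of the argument.
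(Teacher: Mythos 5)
Your proposal follows essentially the same route as the paper's proof: an interior Dynkin/martingale identity for the clipped free flight on each half-line (the paper's martingales ${\cal M}_\pm(u)$ and identity \eqref{020701-19}), combined with the interface conditions \eqref{feb1408}--\eqref{feb1410} with $T=0$ read as a trace identity matched to the conditional law of $\hat\si^y_m$ (the paper's \eqref{040701-19} and the decomposition ${\cal I}_++{\cal I}_0+{\cal I}_-=0$), followed by the limit $h\to0^+$ using boundedness and continuity of $\widetilde W_\pm$ up to the interface. Your reflection bookkeeping via $\widetilde W_\epsilon(t,y,k)=\widetilde W(t,\epsilon y,\epsilon k)$ is just a repackaging of the paper's $\widetilde W_{\si'}(\cdot,\si Y,\si K)$, so the two arguments coincide in substance.
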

As an immediate corollary of Lemma \ref{prop010501-19}, we know that
%\begin{cor}
%\label{cor011303-19}
the sequence $\left({\cal M}_m\right)_{m\ge1}$ is a zero mean martingale with respect
to the filtration~$\left({\frak F}_{m}\right)_{m\ge1}$. Since
${\frak f}+1$ is a stopping time with respect to the
filtration  $\left({\frak F}_{m}\right)_{m\ge1}$, and the
martingale $\left({\cal M}_m\right)_{m\ge1}$ is bounded, the optional stopping
theorem implies that~$\bbE{\cal M}_{{\frak f}+1}=0$, which yields %\eqref{010701-19}.
\begin{align}
\label{010701-19}
&\widetilde W(t,y,k)=\bbE\left[W_0\left(Y^o( t;y,k), K^o( t;k)\right),\,
  t< \tilde  {\frak s}_{y,{\frak f}}\right],~~t>0,~y\in\Rm_*,~k\in\Tm_*,
%  \quad (t,y,k)\in\bar\bbR_+\times\bbR_*\times\bbT_*..
\end{align}
which is a special case of (\ref{010701-19a}) with $T=0$. 
%
%%\end{cor}
%\begin{cor}
%\label{cor010701-19}
%If $\widetilde W(t,y,k)$ is a solution to \eqref{eq:8}, with
%the interface condition~\eqref{feb1408}-\eqref{feb1410}
%with $T=0$, in the sense of Definition $\ref{df013001-19}$, and that
%$\widetilde W(0,y,k) =W_0(y,k)$ is in $ {\cal C}_0$, then
%\begin{align}
%\label{010701-19}
%&\widetilde W(t,y,k)=\bbE\left[W_0\left(Y^o( t;y,k), K^o( t;k)\right),\,
%  t< \tilde  {\frak s}_{y,{\frak f}}\right],~~t>0,~y\in\Rm_*,~k\in\Tm_*.
%%  \quad (t,y,k)\in\bar\bbR_+\times\bbR_*\times\bbT_*..
%\end{align}
%\end{cor}
%{\bf Proof.}  
%
%\bigskip

In general, if $W(t,y,k)$ is as in Proposition~\ref{prop010701-19a}, with $T\neq 0$,
then $\widetilde W(t,x,k)=W(t,y,k) -T $ satisfies \eqref{eq:8}, with
the interface condition given by \eqref{feb1408} and \eqref{feb1410}
corresponding to $T=0$ and the initial condition $\widetilde
W_0(y,k)=W_0(y,k)-T$. It follows from the above that
%\begin{cor}
%\label{cor010701-19a}
%Suppose that $ W(t,y,k)$ is a solution of \eqref{eq:8}, with
%the interface condition given by \eqref{feb1408} and \eqref{feb1410}
%for some $T>0$, in the sense of Definition $\ref{df013001-19}$. Assume that
%$ W(0,y,k) =W_0(y,k)$ and  $ W_0\in {\cal C}_T$. Then,
%\begin{align}
%\label{010701-19a}
%& W(t,y,k)=\bbE\left[W_0\left(Y^o( t;y,k), K^o( t;k)\right),\,
%  t< \tilde  {\frak s}_{y,{\frak f}}\right]+T\bbP\left[
%  t\ge \tilde  {\frak s}_{y,{\frak f}}\right],\quad (t,y,k)\in\bar\bbR_+\times\bbR_*\times\bbT_*.
%\end{align}
%\end{cor}
%{\bf Proof.} Note that $ \widetilde W(t,y,k)= W(t,y,k) -T $ satisfies \eqref{eq:8}, with
%the interface condition given by \eqref{feb1408} and \eqref{feb1410}
%corresponding to $T=0$ and the initial condition $\widetilde
%W_0(y,k)=W_0(y,k)-T$.  According to Corollary \eqref{cor010701-19} we have
\begin{align}
\label{010701-19aa}
& W(t,y,k)=T+\widetilde W(t,y,k)=T+\bbE\left[\widetilde W_0\left(Y^o( t;y,k), K^o( t)\right),\,
  t< \tilde  {\frak s}_{y,{\frak f}}\right]\nonumber\\
&
=\bbE\left[W_0\left(Y^o( t;y,k), K^o( t)\right),\,
  t< \tilde  {\frak s}_{y,{\frak f}}\right]+T\bbP\left[
  t\ge \tilde  {\frak s}_{y,{\frak f}}\right]
\end{align}
and \eqref{010701-19a} follows, finishing the proof of Proposition~\ref{prop010701-19a}.
\qed

%\bigskip
%
%The above result in particular implies also the following.
%\begin{cor}
%\label{cor013101-19}
%Equation \eqref{eq:8} with the interface conditions
%\eqref{feb1408} and \eqref{feb1410} admits at most one solution in the sense of Definition $\ref{df013001-19}$. 
%\end{cor}
%
%
%\bigskip
%
%

{\bf Proof of Lemma \ref{prop010501-19}.}
Let ${\widetilde W}_\pm(t,y,k)$ be the restrictions of 
$\widetilde W$ to  $\{y>0\}$ and $\{y<0\}$, respectively. %$\bbR_+\times \bbR_\pm\times\bbT_*$. 
We extend them to the whole line
%$\bbR_+\times \bbR\times\bbT_*$ 
in such a way that $D_t\widetilde W_\pm$ are well defined
for all $(t,y,k)\in \bbR_+\times \bbR\times\bbT_*$ and they are
bounded and measurable, and 
denote
$$
F_\pm(t,y,k):=(L_k-\om'(k)\partial_y-\partial_t)\widetilde W_\pm(t,y,k).
$$
Note that 
$
F_\pm(t,y,k)=0$ for $y\in\Rm_{\pm}$, respectively, 
%\quad\mbox{provided that }(t,y,k)\in \bbR_+\times \bbR_\pm\times\bbT_*.$
and the processes
$$
{\cal M}_\pm(u):=\widetilde W_\pm(t-u,Y(u;y,k),K(u;k))-\widetilde W_\pm (t,y,k)-\int^u_0 F_\pm(t-s,Y(s;y,k),K(s;k))ds,
$$
with $0\le u\le t$  are  ${\cal F}_u $-martingales, so that
\begin{align}
\label{020701-19}
&
\bbE\left\{\left[\vphantom{\Big|}\widetilde W_{\si'}\left(t-t\wedge \tilde{\frak s}_{y,m+1}^h,\si Y(t\wedge \tilde{\frak s}_{y,m+1}^h;y,k),\si K(t\wedge \tilde{\frak s}_{y,m+1}^h;k))\right)\right.\right.\nonumber\\
&
\\
&
\left.\left.-\vphantom{\Big|}\widetilde W_{\si'}\left(t-t\wedge \tilde{\frak s}_{y,m}^{h,e},\si Y(t\wedge \tilde{\frak s}_{y,m}^{h,e};y,k),\si K(t\wedge \tilde{\frak s}_{y,m}^{h,e};k))\right)\right]\Big|{\cal F}_{\tilde {\frak s}_{y,m}}\right\}=0,\nonumber
\end{align}
{provided that 
$$
\si':=(-1)^{m}\si {\rm sign}\,y.
$$
Note that since 
$$
(-1)^{m} {\rm sign}\,y={\rm sign} \left(Y(t\wedge \tilde{\frak s}_{y,m}^{h,e};y,k)\right)
$$
we have
$$
\si'={\rm sign} \left(\si Y(t\wedge \tilde{\frak s}_{y,m}^{h,e};y,k)\right).
$$}

The interface conditions \eqref{feb1408} and \eqref{feb1410} with $T=0$ can be written
as
\begin{align}
\label{040701-19}
& p_+(\si K(\tilde  {\frak
  s}_{y,m};k))\widetilde W_{-\si'}\left( t- \tilde  {\frak s}_{y,m}
,0, \si K( \tilde  {\frak
  s}_{y,m};k)\right)\\
&
+p_-(\si  K(\tilde  {\frak
  s}_{y,m};k))\widetilde W_{\si'}\left( t- \tilde  {\frak s}_{y,m}
,0, -\si K( \tilde  {\frak
  s}_{y,m};k)\right) 
  =\widetilde W_{\si'}\left( t- \tilde  {\frak s}_{y,m}
,0, \si K( \tilde  {\frak
  s}_{y,m};k)\right), \nonumber
\end{align}
provided that
\begin{equation}
\label{010801-19}
\si'\si  {\rm sgn}\,K(\tilde  {\frak
  s}_{y,m};k)=-1.
\end{equation}

We now need to replace the time $\tilde{\frak s}_{y,m}^{h,e}$ in the second term
in (\ref{020701-19}) by $\tilde{\frak s}_{y,m}^{h}$, in order to convert 
the right side of~(\ref{020701-19})
into a term of a telescoping sum, and to show that ${\cal M}_m$ is a martingale.
To this end, suppose that $\Phi\in C_b((\bbR\times \bbT)^{L+1}\times\{-1,0,1\}^{m-1})$ 
and consider the times $0=t_0<t_1<\ldots<t_L$. Then, we have
\begin{align}\label{apr1502}
&
\bbE\left[{\cal Z}_m\Phi_m\right]=\sum_{\eps}\bbE\left[{\cal Z}_m\Phi_m,\,A_{\eps_1,\ldots,\eps_{m-1}}\right],
\end{align}
with
$$
\Phi_m:=\Phi\left( \left(Y(t_j\wedge \tilde{\frak
      s}_{y,m};y,k),K(t_j\wedge \tilde{\frak s}_{y,m};k)\right)_{0\le
    j\le L},\hat \si_1^y,\ldots,\hat \si_{m-1}^y\right)
$$
$$
A_{\eps_1,\ldots,\eps_{m-1}}=[\hat \si_j^y=\eps_j,\,j=1,\ldots,m-1],
$$
and the summation in (\ref{apr1502})
extending over all sequences $\eps=(\eps_1,\ldots,\eps_{m-1})\in\{-1,0,1\}^{m-1}$.
Suppose that some $\eps_{j}=0$. Then, we have
$$
{\frak f}=\min[j\in\{ 1,\ldots,m\}:\,\eps_j=0]\le m-1
$$ and  ${\cal Z}_m1_{A_\eps}=0$ for the corresponding sequence $(\eps_1,\ldots,\eps_{m-1})$.
On the other hand, if $\eps_j\not=0$ for all~$j=1,\ldots,m-1$, then 
$$
\bbE[{\cal Z}_m\Phi_m, A_\eps]={\cal I}_++{\cal I}_0+{\cal I}_-,
$$
  where ${\cal I}_\iota$, $\iota\in\{-1,0,1\}$, correspond to the
  events $\hat\si^y_{m}=\iota$. 
% Therefore
% \begin{align*}
% &\bbE[{\cal Z}_m\Phi_m, A_\eps]={\cal I}_++{\cal I}_-.
%  % \\
% % &
% % =\sum_{\iota=\pm}\lim_{h\to 0+}\bbE\left[\left\{ \widetilde W\left( t-t\wedge \tilde  {\frak s}_{y,m+1}^h
% % ,Y^o( t\wedge\tilde  {\frak s}_{y,m+1}^h;y,k), K^o( t\wedge\tilde  {\frak
% %   s}_{y,m+1}^h;k)\right)\right. \right.\\
% % &
% % \left. \left.-\widetilde W\left( t- \tilde  {\frak s}_{y,m}^h
% % ,Y^o( \tilde  {\frak s}_{y,m}^h;y,k), K^o(\tilde  {\frak
% %   s}_{y,m}^h;k)\right)\right\}\Phi_m, A_\eps,\,t\ge \tilde  {\frak s}_{y,m}^h,\, \hat\si^y_{m}=\iota\right].
%   \end{align*}
 Knowing the values $\hat \si^y_1,\ldots,\hat\si^y_{m-1}$ and the sign of $y$ one can determine
the sign $\si$ in the equality 
$$
Y^o( t;y,k)=\si Y( t;y,k), \quad K^o( t;k)=\si K( t;k),\quad  \tilde  {\frak s}_{y,m}^h\le t< \tilde  {\frak s}_{y,m},
$$
hence
$$
\widetilde W\left( t- \tilde  {\frak s}_{y,m}^h
,Y^o( \tilde  {\frak s}_{y,m}^h;y,k), K^o(\tilde  {\frak
  s}_{y,m}^h;k)\right)=\widetilde W_{\si'}\left( t- \tilde  {\frak s}_{y,m}^h
,\si Y( \tilde  {\frak s}_{y,m}^h;y,k), \si K(\tilde  {\frak
  s}_{y,m}^h;k)\right),
$$
with {
$
\si':=(-1)^{m}\si {\rm sign}\,y.
$}
We have 
% \begin{align*}
% &{\cal I}_0 =\lim_{h\to 0+}\bbE\left\{\left\{ T-W_{\si'}\left( t- \tilde  {\frak s}_{y,m}^h
% ,\si Y( \tilde  {\frak s}_{y,m}^h;y,k), \si K(\tilde  {\frak
%   s}_{y,m}^h;k)\right)\right\}\Phi_m, A_\eps,\,\hat\si^y_{m}=0,\,t\ge \tilde  {\frak s}_{y,m}^h\right\}\\
%   &
%   = \bbE\left\{\left\{ T{\frak g}(\si K(\tilde  {\frak
%   s}_{y,m};k))-W_{\si'}\left( t- \tilde  {\frak s}_{y,m}
% ,0, \si K(\tilde  {\frak
%   s}_{y,m};k)\right)1_{[\hat\si^y_{m}= 0]}\right\}\Phi_m, A_\eps,\,t\ge \tilde  {\frak s}_{y,m}\right\}
%   \end{align*}
%   and
\begin{align*}
&{\cal I}_\pm  =\lim_{h\to 0+}\bbE\left\{\left\{ \widetilde W_{\mp\si'}\left( t-t\wedge \tilde  {\frak s}_{y,m+1}^h
,\pm\si Y( t\wedge\tilde  {\frak s}_{y,m+1}^h;y,k), \pm\si K( t\wedge\tilde  {\frak
  s}_{y,m+1}^h;k)\right)\right.\right.\\
&
\left. \left.-\widetilde W_{\si'}\left( t- \tilde  {\frak s}_{y,m}^h
,\si Y( \tilde  {\frak s}_{y,m}^h;y,k), \si K(\tilde  {\frak
  s}_{y,m}^h;k)\right)\right\}\Phi_m,
  A_\eps,\,\hat\si^y_{m}=\pm1,\,t\ge \tilde  {\frak
  s}_{y,m}^h\right\}\\
  &
  =\lim_{h\to 0+}\bbE\left\{\left\{ \widetilde W_{\mp\si'}\left( t-t\wedge \tilde  {\frak s}_{y,m+1}^h
,\mp\si Y( t\wedge\tilde  {\frak s}_{y,m+1}^h;y,k), \mp\si K( t\wedge\tilde  {\frak
  s}_{y,m+1}^h;k)\right)p_\pm(\si K(\tilde  {\frak
  s}_{y,m};k))\right.\right.\\
&
\left. \left.-\widetilde W_{\si'}\left( t- \tilde  {\frak s}_{y,m}
,\si Y( \tilde  {\frak s}_{y,m};y,k), \si K(\tilde  {\frak
  s}_{y,m};k)\right)1_{[\hat\si^y_{m}=\pm1]}\right\}\Phi_m, A_\eps,\,t\ge \tilde  {\frak s}_{y,m}\right\}.
  \end{align*}
Passing to the limit $h\to 0+$ above, using the continuity of $\widetilde W_\pm$ up to the
interface and the fact that $ {\frak s}_{y,m}^{h,e}\to {\frak s}_{y,m} $ 
as $h\to 0+$ a.s., and
invoking \eqref{020701-19}, we conclude that
\begin{align*}
&{\cal I}_\pm  =\bbE\left\{\left\{\widetilde W_{\mp\si'}\left( t- \tilde  {\frak s}_{y,m}
,\pm\si Y( \tilde  {\frak s}_{y,m};y,k), \pm\si K( \tilde  {\frak
  s}_{y,m};k)\right)p_\pm(K(\tilde  {\frak
  s}_{y,m};k))\right.\right.\\
&
\left. \left.-\widetilde W_{\si'}\left( t- \tilde  {\frak s}_{y,m}
,\si Y( \tilde  {\frak s}_{y,m};y,k), \si K(\tilde  {\frak
  s}_{y,m};k)\right)1_{[\hat\si^y_{m}=\pm1]}\right\}\Phi_m, A_\eps,\,t\ge \tilde  {\frak s}_{y,m}\right\}.
  \end{align*}
On the event $[\hat\si^y_{m}=0]$
we have ${\frak f}=m$, therefore 
\begin{align*}
&{\cal I}_0 =-\bbE\left\{\widetilde W_{\si'}\left( t- \tilde  {\frak s}_{y, m}
,\si Y( \tilde  {\frak s}_{y, m};y,k), \si K(\tilde  {\frak
  s}_{y, m};k)\right)\Phi_m, A_\eps,\,\hat\si^y_{m}=0,\,t\ge \tilde  {\frak s}_{y, m}\right\},
  \end{align*}
as follows from the condition ${\cal Z}_{\frak f}=0$ on the event 
$[{\frak s}_{y, {\frak f}}>t]$ in (\ref{apr1504}).
%Here, $\si'$ and $\si K(\tilde  {\frak
%  s}_{y,m};k)$ satisfy \eqref{010801-19}.
Now, we conclude that from  \eqref{040701-19} that
$$
{\cal I}_++{\cal I}_0+{\cal I}_-=0,
$$
thus \eqref{100501-19} follows.
\qed
%
%Let us now set
%\begin{align}
%\label{011303-19}
%&{\cal M}_m:=\sum_{j=0}^{m-1}Z_j=\lim_{h\to0+} 1_{[{\frak f}>m-1]} \widetilde W\left( t-t\wedge \tilde  {\frak s}_{y,m+1}^h
%,Y^o( t\wedge\tilde  {\frak s}_{y,m+1}^h;y,k), K^o( t\wedge\tilde  {\frak
%  s}_{y,m+1}^h;k)\right)\nonumber\\
%&
%+1_{[{\frak f}\le m-1]} \widetilde W_0\left( Y^o( t;y,k), K^o( t;k)\right)-W(t,y,k)
%\end{align}

% \textcolor{red}{\em write the probabilistic representation for arbitrary temperature $T>0$}

\section{The scaled processes and their convergence}
\label{sec4}

\subsection{Convergence of processes without an interface}

We consider the rescaled process 
$$
Z_N(t;y,k):=y-\frac{1}{N^{\beta/(1+\beta)}}\sum_{\ell=0}^{[Nt]}
\bar\om'(K_\ell^k)\bar t(K_\ell^k)\tau_\ell,\quad t\ge0,
$$
and  $\tilde Z_N(t;y,k)$ be the linear interpolation in time between the
points $Z_N(n/N;y,k)$,~$n\ge 0$.  
Let also $\left({\frak T}_N(t;k)\right)_{t\ge0}$ and
$\left(Y_N(t;y,k)\right)_{t\ge0}$ be the
scaled versions of the processes defined by~\eqref{010502-19} and \eqref{020502-19},
respectively:
\begin{equation}
\label{010502-19N}
\fT_N(t;k):=\frac{1}{N}\sum_{\ell=0}^{m}\bar
t(K_\ell^k)\tau_\ell,\quad t=\frac{m}{N}
\end{equation}
and it is a linear interpolation otherwise, while
\begin{equation}
\label{020502-19N}
 Y_N(t;y,k):=y-\frac{1}{N^{\beta/(1+\beta)}}\int_0^{Nt}\bar\om'(K(s;k))ds=\tilde
 Z_N(\fT_N^{-1}(t,k),y,k).
\end{equation}
To describe the limit, let
$\left(
\zeta(t)\right)_{t\ge0}
$ be the symmetric stable process with the L\'{e}vy exponent  
$$
\psi(\theta)=\frac{\hat c |\theta|^{1+1/\beta}}{\ga_0\bar R},\quad \theta\in\bbR,
$$
and set
\begin{equation}
\label{tau}
\zeta(t,y):=y+\zeta(t),~~~
\tau(t):=t \bar \tau,~~~\eta(t,y):=\zeta(\tau^{-1}(t),y),\quad t\ge0,
\end{equation}
where 
\begin{equation}
\label{bT}
\bar \tau:=\int_{\bbT} \bar t(k)\mu(dk)=\frac{1}{\ga_0\bar R}.
\end{equation}
%We define
%$$
%\eta(t,y):=\zeta(\tau^{-1}(t),y),\quad t\ge0.
%$$
\begin{prop}
\label{prop010801-19}
For any $t_0>0$ and $k\in \bbT$ we have
\begin{equation}
\label{020801-19}
\lim_{N\to+\infty}\bbE\Big\{\sup_{t\in[0,t_0]}\left|\fT_N(t;k)-\tau(t)\right|\Big\}=0.
\end{equation}
\end{prop}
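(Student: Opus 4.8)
The plan is to read \eqref{020801-19} as a uniform‑in‑time law of large numbers for the nondecreasing additive functional $\fT_N(\cdot;k)$, and to establish it in two stages: first reduce the uniform convergence to pointwise‑in‑$t$ convergence in $L^1$ by exploiting monotonicity, and then prove the pointwise statement by an ergodic‑averaging argument with a truncation that accommodates the unboundedness of $\bar t$.

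For the reduction, I would use that $\fT_N(t;k)$ is nondecreasing in $t$, being the linear interpolation of the partial sums of the nonnegative increments $\bar t(K_\ell^k)\tau_\ell\ge 0$, while the limit $\tau(t)=t\bar\tau$ is continuous and nondecreasing. Fix a partition $0=s_0<s_1<\dots<s_J=t_0$. For $t\in[s_j,s_{j+1}]$ monotonicity gives the sandwich
\[
\fT_N(s_j;k)-\tau(s_{j+1})\le \fT_N(t;k)-\tau(t)\le \fT_N(s_{j+1};k)-\tau(s_j),
\]
whence
\[
\sup_{t\in[0,t_0]}\big|\fT_N(t;k)-\tau(t)\big|\le \max_{0\le j\le J}\big|\fT_N(s_j;k)-\tau(s_j)\big|+\bar\tau\max_{0\le j<J}\big(s_{j+1}-s_j\big).
\]
Taking expectations and bounding the finite maximum by the sum, the first term tends to $0$ provided each $\E|\fT_N(s_j;k)-\tau(s_j)|\to 0$, and the second is $\bar\tau\max_j(s_{j+1}-s_j)$, which is made arbitrarily small by refining the partition after letting $N\to\infty$. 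Thus it suffices to prove, for each fixed $t$, that $\E|\fT_N(t;k)-t\bar\tau|\to 0$.

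For this pointwise statement I would discard the interpolation correction (of order $1/N$) and the $\ell=0$ term (of order $\bar t(k)/N$, negligible for $k\neq 0$), and treat $N^{-1}\sum_{\ell=1}^{[Nt]}\bar t(K_\ell^k)\tau_\ell$. The main obstacle is that $\bar t(k)=1/(\ga_0R(k))$ is unbounded at $k=0$ and, because $R(k)\sim R_0|\sin(\pi k)|^{\beta}$, fails to lie in $L^2(\mu)$ once $\beta\ge 1$, so the spectral gap \eqref{eq:8x} cannot be applied directly to $\bar t$. The remedy uses that $\bar t$ is nonetheless integrable: since $\bar t(k)\mu(dk)=dk/(\ga_0\bar R)$, one has $\int_{\bbT}\bar t\,d\mu=\bar\tau<\infty$. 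I would truncate $\bar t=\bar t_M+\bar t^M$ with $\bar t_M=\bar t\wedge M$ and $\bar t^M=(\bar t-M)_+$. For the bounded part, the spectral gap yields the $L^2$ (hence $L^1$) ergodic convergence $N^{-1}\sum_{\ell\le[Nt]}\bar t_M(K_\ell^k)\to t\int\bar t_M\,d\mu$, while the independent mean‑one factors $\tau_\ell$ contribute a conditionally centered term whose conditional variance is at most $M^2[Nt]/N^2\to0$; the passage from the stationary start to the fixed starting point $k$ is controlled by the smoothing bound \eqref{021102-19}, which (by self‑adjointness of $P$) forces the densities of $K_\ell^k$ with respect to $\mu$ to be bounded by $\|p\|_{L^\infty}$ for $\ell\ge1$. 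For the tail part, the same density bound gives $\E\big[\bar t^M(K_\ell^k)\big]\le \|p\|_{L^\infty}\int_{\bbT}\bar t^M\,d\mu$ uniformly in $\ell\ge1$ and $N$, so that, using the independence of the $\tau_\ell$,
\[
\E\Big[N^{-1}\sum_{\ell=1}^{[Nt]}\bar t^M(K_\ell^k)\tau_\ell\Big]\le t\,\|p\|_{L^\infty}\int_{\bbT} \bar t^M\,d\mu,
\]
which tends to $0$ as $M\to\infty$ since $\bar t\in L^1(\mu)$. Combining the bounded and tail estimates and letting first $N\to\infty$ and then $M\to\infty$ yields $\E|\fT_N(t;k)-t\bar\tau|\to0$, completing the argument. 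The only genuinely delicate point is this uniform‑in‑$N$ tail control, and it is precisely the $L^1(\mu)$‑integrability of $\bar t$ together with the density bound \eqref{021102-19} that makes it work.
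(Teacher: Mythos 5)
The paper offers no proof of this proposition --- it is dismissed with ``the proof is standard and we omit it'' --- so there is nothing to compare against line by line; judged on its own, your argument is correct and complete, and it is exactly the standard route one would expect: a Polya-type reduction of the uniform statement to pointwise $L^1$ convergence via monotonicity of $\fT_N$ and continuity of $\tau$, followed by a truncated ergodic average. You are also right to flag that the truncation is genuinely needed, since $\bar t\notin L^2(\mu)$ for $\beta\ge 1$, and that the whole tail estimate hinges on $\bar t\in L^1(\mu)$ together with a density bound for the law of $K_\ell^k$ relative to $\mu$, uniform in $\ell$. The one imprecision is the constant in that bound: if one works with the physically defined kernel $R(k,k')dk'/R(k)$, the Lebesgue density of the law of $K_\ell^k$ is bounded by $\|p\|_{L^\infty}$, and detailed balance ($R(k)\rho_\ell(k,k')=R(k')\rho_\ell(k',k)$) then gives a density with respect to $\mu$ bounded by $\|p\|_{L^\infty}\bar R/R(k)$ rather than $\|p\|_{L^\infty}$; the constant depends on the (fixed) starting point $k$ and blows up as $k\to 0$, which is harmless here but should be stated, and it also makes explicit that the proposition is really about $k\in\bbT_*$ (at $k=0$ the very first summand $\bar t(k)\tau_0/N$ is infinite when $R(0)=0$). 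With that adjustment your tail bound reads $\E\big[N^{-1}\sum_{\ell\le [Nt]}\bar t^M(K_\ell^k)\tau_\ell\big]\le t\,\|p\|_{L^\infty}\bar R\,R(k)^{-1}\int_{\bbT}\bar t^M\,d\mu$, which still vanishes as $M\to\infty$ uniformly in $N$, and the rest of the argument goes through unchanged.
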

The proof of the proposition is standard an we omit it. %presented  in Appendix \ref{app2}.

The following result is a simpler version of 
Proposition \ref{prop010801-19} and   
Theorem 2.5(i) of~\cite{jk}, see also  Theorem 2.4
of \cite{jko} and Theorem 3.2 of \cite{bb}. 
\begin{prop}
\label{joint-conv-2}
Suppose that $\beta>1$ and $(y,k)\in\bbR_*\times\bbT_*$. Under the assumptions 
on the functions~$R $ and  $\om $ in Section $\ref{sec2}$, the joint law of
$\left(Z_N(t,y,k),\fT_N(t;k)\right)_{t\ge0}$ converges in law, over
${\cal D}_2:=D([0,+\infty);\bbR\times\bar\bbR_+)$ equipped with the Skorokhod 
$J_1$-topology to  $\left(\zeta(t,y),\tau(t)\right)_{t\ge0}$.
\end{prop}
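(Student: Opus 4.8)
The plan is to handle the two coordinates separately and then glue them by a Slutsky-type argument, taking advantage of the fact that the clock limit $\tau(t)=t\bar\tau$ is \emph{deterministic} and continuous. First I would record the marginal convergence of the clock: this is exactly Proposition~\ref{prop010801-19}, whose bound \eqref{020801-19} gives $\sup_{t\in[0,t_0]}|\fT_N(t;k)-\tau(t)|\to 0$ in $L^1$, hence in probability, uniformly on compacts. Next I would establish the marginal convergence $Z_N(\cdot,y,k)\Rightarrow\zeta(\cdot,y)$ in $D([0,\infty);\bbR)$ with the $J_1$ topology. Because $\fT_N$ converges in probability to a deterministic continuous limit, the joint law is asymptotically that of $(Z_N(\cdot,y,k),\tau)$ irrespective of the (strong) dependence between the two coordinates, so $(Z_N,\fT_N)\Rightarrow(\zeta(\cdot,y),\tau)$ in law; moreover, since $\fT_N\to\tau$ \emph{uniformly} on compacts and $\tau$ is continuous, any near-identity time change $\lambda_N\to{\rm id}$ witnessing the $J_1$-closeness of $Z_N$ to $\zeta$ simultaneously keeps $\fT_N\circ\lambda_N$ uniformly close to $\tau$, which upgrades the convergence to the joint $J_1$ topology on ${\cal D}_2$.

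The heart of the matter is thus the marginal convergence of $Z_N$. Here I would exploit the degeneracy $R(k)\sim R_0|\sin(\pi k)|^{\beta}$ together with the acoustic dispersion $\bar\om'(k)\to\om_0'\,{\rm sign}\,k$ as $k\to 0$. Under the stationary law $\mu(dk)=R(k)\,dk/\bar R$, the per-step displacement $\bar\om'(k)\bar t(k)=\bar\om'(k)/(\ga_0 R(k))$ behaves like $\om_0'\,{\rm sign}(k)/(\ga_0 R_0(\pi|k|)^{\beta})$ near the origin, and a direct computation of $\mu[\,|\bar\om'\bar t|>x\,]$ shows it is regularly varying with tail index $1+1/\beta\in(1,2)$; since $\bar\om'$ is odd and $\mu$ symmetric, the tails are symmetric, and multiplication by the independent, light-tailed holding time $\tau_\ell$ changes only the tail constant (a Breiman-type estimate), not the index. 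The spatial normalization $N^{\beta/(1+\beta)}=N^{1/(1+1/\beta)}$ is precisely the stable scaling for a sum of order $N$ such summands, while the spectral gap \eqref{eq:8x} makes the chain $(K^k_\ell)$ mix geometrically, so a big-block/small-block argument renders the partial sums asymptotically equivalent to those of an i.i.d.\ sequence with the same marginal tail. The classical stable limit theorem then gives finite-dimensional convergence to the symmetric $(1+1/\beta)$-stable process $\zeta(\cdot,y)$, whose Lévy exponent one checks equals $\psi(\theta)=\hat c|\theta|^{1+1/\beta}/(\ga_0\bar R)$ with $\hat c$ as in \eqref{hatc}; the factor $1/(\ga_0\bar R)=\bar\tau$ is exactly what makes the time-changed process $\eta(t,y)=\zeta(\tau^{-1}(t),y)$ carry the clean fractional generator $-\hat c\Lambda_\beta$. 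Tightness in $J_1$ I would obtain from maximal and increment moment bounds that follow from the same regular variation and mixing.

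I expect the main obstacle to be this marginal convergence of $Z_N$, and specifically controlling the Markovian dependence of the summands without destroying the heavy tails: for a Gaussian limit one would use a corrector/martingale decomposition, but in the stable regime the cleaner route is the blocking argument, or equivalently the convergence of the rescaled large displacements (coming from the rare visits of $K^k_\ell$ near $0$) to a Poisson point process, these being asymptotically independent thanks to the spectral gap. This is precisely the content of Theorem~2.5(i) of~\cite{jk} (see also~\cite{jko,bb}), of which the present statement is a special case, so I would invoke it for $Z_N$ and supply only the Slutsky gluing together with the verification that the normalizing constants reproduce $\psi$ and $\hat c$.
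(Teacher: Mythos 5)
Your proposal is correct and follows essentially the same route as the paper, which gives no proof of this proposition beyond citing Theorem 2.5(i) of \cite{jk} (together with \cite{jko,bb}) for the marginal stable limit of $Z_N$ and Proposition \ref{prop010801-19} for the clock. Your added details — the verification that the per-step displacement has symmetric regularly varying tails of index $1+1/\beta$ matching the normalization $N^{\beta/(1+\beta)}$, and the Slutsky-type gluing using that $\tau$ is deterministic and continuous so the joint $J_1$ convergence on ${\cal D}_2$ follows from the marginals — are accurate and fill in exactly what the paper leaves implicit.
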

The following result is an immediate consequence of the above
theorem.
\begin{cor}
\label{cor010502-19}
The process $\left(Y_N(t,y,k)\right)_{t\ge0}$  converge in law, as
 $N\to+\infty$,
over
$D[0,+\infty)$ equipped with the Skorokhod $M_1$-topology
to  $\left(\eta(t,y,k)\right)_{t\ge0}$.
\end{cor}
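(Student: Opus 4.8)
The plan is to exhibit $\eta(\cdot,y,k)$ as a continuous image of the pair $\bigl(\tilde Z_N(\cdot;y,k),\fT_N^{-1}(\cdot;k)\bigr)$ and to invoke the continuity of the inverse and composition maps for the Skorokhod topologies. The only genuinely new point relative to Proposition~\ref{joint-conv-2} is the passage from the $J_1$-topology there to the $M_1$-topology asserted here, which is dictated by the linear interpolation.

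First I would pass to the inverse clock. By Proposition~\ref{joint-conv-2} the pair $\bigl(Z_N(\cdot,y,k),\fT_N(\cdot;k)\bigr)$ converges in $J_1$ to $\bigl(\zeta(\cdot,y),\tau\bigr)$; since the limiting clock $\tau(t)=\bar\tau t$ is continuous and strictly increasing, $J_1$-convergence of the second coordinate to it is equivalent to uniform convergence on compact time intervals, so $\fT_N(\cdot;k)\to\tau$ uniformly on compacts. The inverse map $\lambda\mapsto\lambda^{-1}$ is continuous at continuous strictly increasing functions for the uniform topology, whence $\fT_N^{-1}(\cdot;k)\to\tau^{-1}$, with $\tau^{-1}(t)=t/\bar\tau$, uniformly on compacts. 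Invoking the Skorokhod representation theorem I may assume all these convergences, together with $Z_N\to\zeta(\cdot,y)$ in $J_1$, hold almost surely on one probability space.

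Next I would treat the interpolation. The interpolant $\tilde Z_N$ differs from the step process $Z_N$ only in that each jump is replaced by a linear ramp over an operational-time interval of length $1/N\to0$; since each such ramp is monotone and traverses exactly the corresponding increment, its graph is Hausdorff-close to the vertical segment of the jump, so $\tilde Z_N\to\zeta(\cdot,y)$ in the $M_1$-topology even though $\tilde Z_N$ is continuous while $\zeta$ is not. It is precisely this monotone filling of the vertical segments that makes $M_1$, rather than $J_1$, the correct topology: in $J_1$ the ramps would have to be matched to genuine jumps, which fails. Writing $Y_N(\cdot;y,k)=\tilde Z_N(\fT_N^{-1}(\cdot;k);y,k)$ and noting that the limiting time change $\tau^{-1}$ is continuous and strictly increasing (so $\zeta$ and $\tau^{-1}$ share no discontinuities), the standard $M_1$-continuity of the composition map $(x,\lambda)\mapsto x\circ\lambda$ applies at $(\zeta(\cdot,y),\tau^{-1})$. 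Combining the almost sure $M_1$-convergence $\tilde Z_N\to\zeta(\cdot,y)$ with the almost sure uniform convergence $\fT_N^{-1}\to\tau^{-1}$ yields $Y_N\to\zeta(\tau^{-1}(\cdot),y)=\eta(\cdot,y,k)$ almost surely in $M_1$, hence in law, which is the assertion.

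The point requiring most care is the composition step in $M_1$: this topology is not preserved under composition in general, and one must use that the inner time change converges uniformly to a continuous strictly increasing limit to ensure that no two vertical segments of $\tilde Z_N$ are merged or reordered in the limit. Verifying the hypotheses of the composition theorem (continuity and strict monotonicity of $\tau^{-1}$, and the absence of common discontinuities, automatic since $\tau^{-1}$ is continuous), together with checking that the interpolation ramps genuinely produce $M_1$-convergence and not merely pointwise convergence, is the accompanying technical work.
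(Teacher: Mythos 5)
Your argument is correct and is exactly the intended route: the paper states this corollary as an immediate consequence of Proposition \ref{joint-conv-2}, and your write-up simply fills in the standard details (Skorokhod representation, uniform convergence of the clock and its inverse, $M_1$-closeness of the linear interpolant to the step process, and the $M_1$-composition theorem with a continuous strictly increasing inner time change, the same tool the paper invokes later via Theorem 7.2.3 of \cite{whitt-s}). Nothing to add.
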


\subsection{Joint convergence of processes and crossing times and positions}

Using the analogues of \eqref{010501-19}-\eqref{010501-19am}    
we can define crossing times 
${\frak s}_{y,m}^N$, $\tilde {\frak s}_{y,m}^N$, $m,N=1,2,\ldots$ for
the scaled process
$(Z_N(t,y,k))_{t\ge0}$ and $(\tilde Z_N(t;y,k))_{t\ge0}$, respectively.
As a simple consequence of absolute continuity of the law of $Z_n(y,k)$  we conclude that for each $y\in\bbR$ there exists  a strictly increasing sequence
$(n_{y,m}^N)_{m\ge1}$ such that 
\begin{equation}
\label{010301-19}
 \tilde{\frak
  s}_{y,m}^{N}\le  {\frak s}_{y,m}^{N}=\frac{n_{y,m}^N}{N}< \tilde{\frak
  s}_{y,m}^{N}+\frac1N,\quad \mbox{ a.s.}
\end{equation}
Likewise, let ${\frak u}_{y,m}$ be the consecutive times when the process
$\left(\zeta(t,y)\right)_{t\ge0}$ crosses the level $z=0$.
The main result of this section is the following.
\begin{thm}
\label{thm013112}
For any $(y,k)\in\bbR\times\bbT_*$ the random elements
$$
\Big(\left(Z_{N}(t,y,k), \fT_N(t;k)\right)_{t\ge0},({\frak s}_{y,m}^N)_{m\ge1},(Z_N({\frak
  s}_{y,m}^{N},y,k))_{m\ge1}\Big)
$$ converge in law, as $N\to+\infty$, over
${\cal D}_2\times \bar\bbR^{\bbN}_+\times \bbR^{\bbN}$ with the product of the $J_1$ and standard
product topology on $(\bbR^{\bbN})^2$, to  
$$
\Big((\zeta(t,y),\tau(t))_{t\ge0},({\frak
  u}_{y,m})_{m\ge1},(\zeta({\frak u}_{y,m},y,k)_{m\ge1}\Big).
$$ 
\end{thm}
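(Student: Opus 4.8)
The plan is to bootstrap from the process-level convergence already available in Proposition \ref{joint-conv-2}, which gives joint convergence of $\left(Z_N(t,y,k),\fT_N(t;k)\right)_{t\ge0}$ to $\left(\zeta(t,y),\tau(t)\right)_{t\ge0}$ in the $J_1$-topology on ${\cal D}_2$, and to upgrade it to the joint convergence of the triple that also records the crossing times ${\frak s}_{y,m}^N$ and the crossing positions $Z_N({\frak s}_{y,m}^N,y,k)$. The natural strategy is a continuous-mapping argument: I would express the crossing times and positions as functionals of the path $\left(Z_N(t,y,k),\fT_N(t;k)\right)_{t\ge0}$ and argue that these functionals are continuous at the limiting path with probability one. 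Concretely, the level-crossing time ${\frak u}_{y,m}$ is the $m$-th time at which $\zeta(t,y)$ passes through $0$, and by \eqref{010301-19} the discrete crossing time ${\frak s}_{y,m}^N=n_{y,m}^N/N$ differs from the continuous-interpolation crossing time $\tilde{\frak s}_{y,m}^N$ by at most $1/N$, so it suffices to control the crossings of the interpolated paths. By Skorokhod's representation theorem I would pass to a probability space on which the convergence in ${\cal D}_2$ holds almost surely, and then argue the convergence of crossing data pathwise.

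First I would establish that, almost surely with respect to the limit process, the path $t\mapsto\zeta(t,y)$ crosses the level $0$ \emph{transversally} at each ${\frak u}_{y,m}$, in the sense that the process changes sign at each crossing and the crossings do not accumulate on compact time intervals. Since $\zeta$ is a symmetric stable process with index $1+1/\beta\in(1,2)$ for $\beta>1$, it is a genuine jump process whose paths are of infinite variation but whose sign changes at level $0$ occur either by a jump across $0$ or by a continuous-in-the-sense-of-oscillation passage; the key probabilistic fact I would invoke is that, for such a stable process, the level $0$ is regular for both half-lines and the set of times at which $\zeta(t,y)=0$ has Lebesgue measure zero, so that the crossing times ${\frak u}_{y,m}$ are well-defined, finite, strictly increasing, and the process is strictly on one side of $0$ immediately before and immediately after each crossing (almost surely). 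I would also use that, almost surely, $\zeta(\cdot,y)$ has no jump landing exactly at $0$ and does not touch $0$ at a local extremum without crossing; these are null events for the stable law.

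Granting this transversality, the continuous-mapping step runs as follows. Under $J_1$ (hence, after composition with time-change, under the relevant topology for the $Y$-process) almost-sure convergence of $\tilde Z_N(\cdot,y,k)$ to $\zeta(\cdot,y)$, the $m$-th sign change of the approximating paths converges to the $m$-th sign change of the limit. The mechanism is standard: if the limit path is strictly negative on some interval $(\,{\frak u}_{y,m}-\delta,\,{\frak u}_{y,m})$ and strictly positive on $({\frak u}_{y,m},{\frak u}_{y,m}+\delta)$ (or vice versa), then for $N$ large the approximating path must also change sign inside $({\frak u}_{y,m}-\delta,{\frak u}_{y,m}+\delta)$, forcing $\tilde{\frak s}_{y,m}^N\to{\frak u}_{y,m}$, and by \eqref{010301-19} also ${\frak s}_{y,m}^N\to{\frak u}_{y,m}$; simultaneously the crossing positions $Z_N({\frak s}_{y,m}^N,y,k)$ converge to $\zeta({\frak u}_{y,m},y)$ by the joint convergence of path and time. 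I would handle the $J_1$-topology subtlety—that a single large jump of $Z_N$ could straddle $0$ and thus constitute a crossing purely by a jump—by noting that in the limit such a straddling jump is precisely a jump of $\zeta$ across $0$, which is recorded as a crossing in the definition of ${\frak u}_{y,m}$, so the two bookkeeping conventions agree in the limit.

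The main obstacle I expect is the interaction between the Skorokhod $J_1$-topology and the hitting/crossing functionals, since level-crossing times are \emph{not} in general continuous functionals on $D[0,+\infty)$ with the $J_1$-topology—they are continuous only at paths that cross transversally and avoid the degenerate configurations described above. The heart of the argument is therefore to show that the limiting stable process avoids these bad configurations almost surely and that the approximating paths inherit uniform control near each crossing, so that no crossings are spuriously created or destroyed in the limit. A secondary technical point is the possible accumulation of crossings: because $0$ is regular for both half-lines of the stable process, infinitely many crossings may occur in any neighborhood of a crossing time, which could break the naive pathwise argument; I would address this by working with the \emph{excursion} structure, using that between two consecutive macroscopic crossings ${\frak u}_{y,m}$ and ${\frak u}_{y,m+1}$ the process makes a genuine sojourn on one side bounded away from $0$ on a set of positive measure, and that the definitions \eqref{010501-19}--\eqref{010501-19am} count only the macroscopic crossings (the first jump time after an excursion reaches the opposite half-line), which are stable under the limit. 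Once continuity of the crossing functionals at almost every limit path is established, the continuous-mapping theorem delivers the joint convergence of the full triple, completing the proof.
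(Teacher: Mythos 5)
Your plan is correct and follows essentially the same route as the paper: Skorokhod representation plus a continuous-mapping argument, reducing everything to the almost-sure continuity of the crossing functionals at the limit path, which is secured by exactly the path properties you list (strict overshoot at first passage into the opposite half-line, no creeping, no jump landing exactly at the level) and then iterated over successive crossings via the strong Markov property. The paper implements the "transversality" step concretely through the $J_1$-continuous running-maximum functional together with the absolute continuity of the law of $\sup_{s\le t}\zeta(s)$ (Kwa\'snicki--Ma\l ecki--Ryznar), Millar's Corollary 2.2 for the strict overshoot, and Lamperti's result excluding the creeping configuration, which is precisely the content your sketch would need to make rigorous.
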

The proof of this result is contained in Appendix \ref{appA}.

We now formulate a property of the approximating
process $(Z_N(t))_{t\ge0}$ at the crossing times. We start with the
following simple consequence of Corollary 2.2  of \cite{millar} and the
strong Markov property of stable processes.
\begin{lemma}
\label{lm010201-19} %Suppose that $M$ is a positive integer. Then, 
For
each $y>0$ we have
\begin{equation}
\label{050201-19}
\bbP\left[\zeta({\frak s}_{y,2m},y)>0>\zeta({\frak s}_{y,2m-1},y),\,m\ge 1\right]=1.
\end{equation}
If, on the other hand 
 $y<0$, then 
\begin{equation}
\label{050201-19n}
\bbP\left[\zeta({\frak s}_{y,2m-1,y},y)>0>\zeta({\frak s}_{y,2m},y),\,m\ge 1\right]=1.
\end{equation}
\end{lemma}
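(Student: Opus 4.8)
The plan is to read the claim as a statement about first-passage overshoots of the symmetric stable process and to obtain it by iterating a single non-creeping fact through the strong Markov property. Since $\beta>1$, the stable index $\alpha:=1+1/\beta$ lies in $(1,2)$, so $\zeta(\cdot,y)$ is recurrent and, by Corollary 2.2 of \cite{millar}, it does not creep across levels: writing $\sigma:=\inf\{t>0:\,\zeta(t,y)<0\}$ for the first passage strictly below $0$ (and symmetrically for passage above), one has $\zeta(\sigma,y)\neq 0$ almost surely, i.e.\ the overshoot at a first passage is strictly signed. Recurrence (valid because $\alpha>1$) guarantees that every crossing time ${\frak s}_{y,m}$ is finite almost surely, so the lemma concerns only the sign of $\zeta$ at a countable family of genuine first-passage times.

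First I would treat $y>0$. By definition ${\frak s}_{y,1}$ is the first passage of $\zeta(\cdot,y)$ strictly below $0$; applying the non-creeping property to $-\zeta(\cdot,y)$, which is again symmetric $\alpha$-stable, gives $\zeta({\frak s}_{y,1},y)<0$ almost surely. Conditioning on ${\cal F}_{{\frak s}_{y,1}}$ and invoking the strong Markov property, the shifted process $(\zeta({\frak s}_{y,1}+s,y))_{s\ge0}$ is a symmetric $\alpha$-stable process started at the strictly negative point $\zeta({\frak s}_{y,1},y)$. Its first passage strictly above $0$ is exactly ${\frak s}_{y,2}$, so non-creeping now yields $\zeta({\frak s}_{y,2},y)>0$ almost surely. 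Iterating, at each odd crossing ${\frak s}_{y,2m-1}$ the process lands strictly below $0$ and at each even crossing ${\frak s}_{y,2m}$ it lands strictly above $0$; intersecting this countable family of almost-sure events gives \eqref{050201-19}.

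The case $y<0$ follows verbatim after exchanging the two half-lines, which amounts to applying the identical argument to the symmetric stable process $-\zeta(\cdot,y)$ started at $-y>0$: here ${\frak s}_{y,1}$ is a first passage strictly above $0$, so the parities at which $\zeta$ is positive and negative are reversed, which is precisely \eqref{050201-19n}. The only real obstacle is the bookkeeping needed to apply Corollary 2.2 of \cite{millar} at \emph{every} crossing rather than merely the first: one must check that after the $m$-th crossing the process starts strictly inside one half-line (which holds on the almost-sure event $\{\zeta({\frak s}_{y,m},y)\neq0\}$), so that the next crossing is a bona fide first passage of the strong-Markov-shifted process to the opposite half-line. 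Once this is observed the non-creeping conclusion transfers to each step without further estimates, which is why the result is a simple consequence of \cite{millar}.
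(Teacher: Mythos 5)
Your argument is correct and is exactly the route the paper takes: the lemma is presented there as a direct consequence of Corollary 2.2 of \cite{millar} (no creeping across a level for the symmetric stable process of index $1+1/\beta\in(1,2)$) combined with the strong Markov property applied iteratively at each crossing time. Your additional bookkeeping about the shifted process starting strictly inside a half-line, and recurrence guaranteeing finiteness of the crossing times, fills in precisely the details the paper leaves implicit.
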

%{\bf Proof.}
%Thanks to the aforementioned Corollary 2.2  of
%\cite{millar}, we have
%\begin{equation}
%\label{050201-19a}
%\bbP\left[\zeta({\frak s}_{y,1},y)<0\right]=1.
%\end{equation}
%Using the notation of \eqref{tildeZ} and the strong Markov property we can write 
%\begin{align}
%\label{050201-19b}
%&
%\bbP\left[\zeta({\frak u}_{y,1},y)<0, \zeta({\frak
%    u}_{y,2},y)>0\right]=\bbP\left[\zeta({\frak u}_{y,1},y)<0, \zeta({\frak
%    u}_{y,2},y)- \zeta({\frak u}_{y,1},y)>y-\zeta({\frak u}_{y,1},y)\right]
%\\
%&
%=\bbE\left[\bbP\left[ \bar{T}_{y-z}\left(\zeta(\cdot)\right) >y-z\right]_{z=\zeta({\frak
%  u}_{y,1})}, \, \zeta({\frak u}_{y,1},y)<y\right].\nonumber
%\end{align}
%According to \cite{millar} we have
%$\bbP\left[ \bar{T}_{y-z}\left(\zeta(\cdot)\right) >y-z\right]=1$ for
%all $z\in \bbR$, therefore the right hand side of \eqref{050201-19b}
%equals $1$ and the conclusion of the lemma for $M=1$ has been
%shown. The generalization to an arbitrary $M$ can be achieved via an
%induction argument.
%\qed
%
%\bigskip
%
As a  consequence, we obtain the following estimate on the distance 
the particle travels upon a crossing, so that the jump is "macroscopic". 
% \begin{cor}
% \label{cor010201-19} 
%  Suppose that $M$ is a positive integer. Then, for
% each $x>0$ we have
% \begin{equation}
% \label{070201-19}
% \bbP\left[Z^x({\frak t}_{x,2m-1})>0>Z^x({\frak t}_{x,2m}),\,m=1,\ldots,M\right]=1.
% \end{equation}
% \end{cor} 

% \bigskip

\begin{cor}
\label{cor010301-19} 
 Suppose that $(y,k)\in\bbR_*\times\bbT_*$, $\eps>0$ and $M$ is a
 positive integer. Then,  there exist $C>0$ that depends on $\eps$ and $M$ such that
\begin{equation}
\label{020301-19}
\bbP\left[\min_{m=1,\ldots,M}|\bar\om'(K^k_{n^N_{y,m}})|\bar
  t(K^k_{n^N_{y,m}})\le CN^{\beta/(1+\beta)}\right]<\eps,\quad \hbox{ for all $N\ge 1$.}
\end{equation}
\end{cor}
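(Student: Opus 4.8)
The plan is to use that, for the limiting stable process, every crossing of the interface occurs through a single macroscopic jump, so that the overshoot is bounded away from $0$; pushing this back to the prelimit via Theorem~\ref{thm013112} forces the crossing jumps of $Z_N$, and hence the coefficients $a_n:=|\bar\om'(K^k_n)|\bar t(K^k_n)$ at the crossing indices, to be of order $N^{\beta/(1+\beta)}$. The starting observation is an overshoot inequality: since $n^N_{y,m}$ is the first index at which $Z_N$ lands on the opposite side of $0$, the magnitude of the single jump realizing the $m$-th crossing,
$$
\xi_{n^N_{y,m}}:=N^{-\beta/(1+\beta)}\,a_{n^N_{y,m}}\,\tau_{n^N_{y,m}},
$$
dominates the overshoot $|Z_N({\frak s}^N_{y,m},y,k)|$, because the pre-crossing position has the opposite sign. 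Consequently, fixing $\delta,A>0$ and setting $C:=\delta/A$, the elementary inclusion
$$
\{a_{n^N_{y,m}}\le CN^{\beta/(1+\beta)}\}\subseteq\{|Z_N({\frak s}^N_{y,m},y,k)|\le\delta\}\cup\{\tau_{n^N_{y,m}}>A\}
$$
holds, since an overshoot exceeding $\delta$ together with $a_{n^N_{y,m}}\le CN^{\beta/(1+\beta)}$ forces $\tau_{n^N_{y,m}}=\xi_{n^N_{y,m}}N^{\beta/(1+\beta)}/a_{n^N_{y,m}}>\delta/C=A$.

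Next I would control the overshoots. By Theorem~\ref{thm013112} the vector $(Z_N({\frak s}^N_{y,m},y,k))_{m=1}^{M}$ converges in law to $(\zeta({\frak u}_{y,m},y))_{m=1}^{M}$, and each limit crossing value $\zeta({\frak u}_{y,m},y)$ is a.s.\ nonzero by Lemma~\ref{lm010201-19}; hence $\min_{m\le M}|\zeta({\frak u}_{y,m},y)|>0$ a.s. I then choose $\delta>0$, a continuity point of its law, with $\bbP[\min_{m\le M}|\zeta({\frak u}_{y,m},y)|\le\delta]<\eps/4$, so that the portmanteau theorem gives $\limsup_N\bbP[\min_{m\le M}|Z_N({\frak s}^N_{y,m},y,k)|\le\delta]\le\eps/4$, hence below $\eps/2$ for all large $N$. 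The same theorem yields tightness of ${\frak s}^N_{y,M}=n^N_{y,M}/N$, so I may also fix $L$ with $\bbP[n^N_{y,M}>LN]<\eps/4$ for all large $N$.

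The third and decisive step is to rule out crossings produced by an atypically large holding time rather than by a small momentum. On $\{n^N_{y,M}\le LN\}$ every crossing with overshoot $>\delta$ and $a_{n^N_{y,m}}\le CN^{\beta/(1+\beta)}$ is a step $n\le LN$ with $\xi_n>\delta$ and $a_n\le CN^{\beta/(1+\beta)}$, so by a union bound it suffices to estimate $LN\cdot\sup_{n}\bbP[\xi_n>\delta,\ a_n\le CN^{\beta/(1+\beta)}]$. Since $\tau_n\sim\mathrm{exp}(1)$ is independent of $K^k_n$, integrating it out gives $\bbP[\xi_n>\delta,\ a_n\le CN^{\beta/(1+\beta)}]=\E\big[\mathbf{1}_{\{a_n\le CN^{\beta/(1+\beta)}\}}\,e^{-\delta N^{\beta/(1+\beta)}/a_n}\big]$. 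The key input is a tail bound on $a_n$ uniform in $n\ge1$: from $a_n\le c_+\om_0'/(\ga_0R_0|K^k_n|^{\beta})$ (cf.\ \eqref{cp}) and the fact that the law of $K^k_n$ has density bounded by $\|p\|_{L^\infty}R(\cdot)/\bar R\lesssim|\cdot|^{\beta}$ near $0$, one gets $\bbP[a_n>u]\lesssim u^{-(1+1/\beta)}$. Inserting this and substituting $w=\delta N^{\beta/(1+\beta)}/a_n$, together with the identity $(\beta/(1+\beta))(1+1/\beta)=1$, turns the expectation into $\lesssim N^{-1}\delta^{-(1+1/\beta)}\int_A^{\infty}w^{1+1/\beta}e^{-w}\,dw$, whence $LN\cdot\sup_n\bbP[\cdots]\lesssim L\int_A^{\infty}w^{1+1/\beta}e^{-w}\,dw<\eps/4$ once $A$ (equivalently $1/C$) is large. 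This heavy-tailed versus light-tailed competition—showing that macroscopic crossing jumps come from small $K^k_n$ and not from large $\tau_n$—is the real obstacle; the rest is convergence and union bounds.

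Finally I would assemble the estimates: for all large $N$ the three bad events have total probability $<\eps$. The finitely many remaining small values of $N$ are disposed of by shrinking $C$, using that $K^k_{n^N_{y,m}}$ has an absolutely continuous law, so $\bbP[a_{n^N_{y,m}}=0]=0$ and $\bbP[a_{n^N_{y,m}}\le CN^{\beta/(1+\beta)}]\downarrow0$ as $C\downarrow0$; taking the minimum of the finitely many resulting constants produces a single $C>0$ for which \eqref{020301-19} holds for every $N\ge1$.
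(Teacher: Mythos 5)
Your proposal is correct, and its first half --- invoking Theorem \ref{thm013112} together with Lemma \ref{lm010201-19} to secure, with probability at least $1-\eps/2$ uniformly in large $N$, a lower bound $\delta>0$ on the overshoots $|Z_N({\frak s}^N_{y,m},y,k)|$, $m=1,\ldots,M$, and disposing of finitely many small $N$ at the end by shrinking $C$ via absolute continuity --- is exactly the paper's argument. Where you genuinely diverge is in what you extract from the overshoot bound. The paper writes the positions at the crossing times as partial sums of $\bar\om'(K_n^k)\bar t(K_n^k)$ \emph{without} the exponential waiting times $\tau_n$ and reads off $|\bar\om'(K^k_{n^N_{y,m}})|\bar t(K^k_{n^N_{y,m}})>cN^{\beta/(\beta+1)}$ directly; with the $\tau_n$'s restored (they are present in the definition \eqref{100401-19} of $Z_n$), the overshoot only bounds the product $|\bar\om'(K^k_{n^N_{y,m}})|\bar t(K^k_{n^N_{y,m}})\,\tau_{n^N_{y,m}}$ from below, and since the crossing index is itself selected using the $\tau$'s, one cannot simply appeal to the $\mathrm{exp}(1)$ tail of $\tau_{n^N_{y,m}}$ (it is size-biased toward large values). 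Your third step closes precisely this gap: restricting to $\{n^N_{y,M}\le LN\}$, taking a union bound over $n\le LN$, and integrating out $\tau_n$ to get $\bbP[\xi_n>\delta,\ a_n\le CN^{\beta/(1+\beta)}]\lesssim N^{-1}\delta^{-(1+1/\beta)}\int_A^{\infty}w^{1+1/\beta}e^{-w}\,dw$, where the tail bound $\bbP[a_n>u]\lesssim u^{-(1+1/\beta)}$ follows from \eqref{cp}, \eqref{tot} and the boundedness of $p(k,k')$, and the exponent identity $(\beta/(1+\beta))(1+1/\beta)=1$ makes the factor $LN$ from the union bound cancel exactly. I checked this computation and the surrounding inclusions of events; they are sound. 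In short: same skeleton as the paper, but your version supplies a quantitative step that the paper's own proof elides, and is the more complete of the two.
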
 
{\bf Proof.} Suppose that $y>0$. As a consequence of Theorem~\ref{thm013112}, 
for any $M$, the random vectors
\begin{equation}
\label{040301-19}
(Z_{N}({\frak s}_{y,1}^N,y,k),\ldots, Z_{N}({\frak s}_{y,M}^N,y,k))
\end{equation} converge in law
to  $\left(\zeta({\frak u}_{y,1},y),\ldots, \zeta({\frak u}_{y,M},y)\right)$. 
Lemma \ref{lm010201-19} implies that given $\eps>0$,
there exists~$c>0$ that depends on $\eps$ and $M$ such that
\begin{equation}
\label{070301-19}
\bbP\left[\zeta({\frak u}_{y,2m-1},y)<-c,\, \zeta({\frak
    u}_{y,2m},y)>c,\quad m=1,\ldots,M\right]>1-\eps.
\end{equation} 
%Here $M_1=M/2$ when $M$ is even and $M_1=(M+1)/2$, if otherwise.
Let us set
$$
A_N(c):=\left[Z_{N}({\frak s}_{y,2m-1}^N,y,k)<-c, Z_{N}({\frak
    s}_{y,2m}^N,y,k)>c,\quad m=1,\ldots,M\right].
$$
The convergence in law of the vectors \eqref{040301-19} and (\ref{070301-19})
imply that
\begin{equation}
\label{040301-19a}
\bbP[A_N(c_\eps)]>1-\eps
\end{equation} 
for all sufficiently large  $N$. Decreasing $c>0$ if
necessary, we can claim that \eqref{040301-19a} holds for all $N\ge1$, so that
on  $A_N(c)$ we have  
$$
y-\frac{1}{N^{\beta/(\beta+1)}}\sum_{n=0}^{n_{y,2m-1}^{N}}\bar\om' (K_n^k)\bar t (K_n^k)
<-c<0\le y-\frac{1}{N^{\beta/(\beta+1)}}\sum_{n=0}^{n_{y,2m-1}^{N}-1}
\bar\om' (K_n^k)\bar t (K_n^k)
$$
and
$$
y-\frac{1}{N^{\beta/(\beta+1)}}\sum_{n=0}^{n_{y,2m}^{N}}
\bar\om' (K_n^k)\bar t (K_n^k)>c>0\ge
y-\frac{1}{N^{\beta/(\beta+1)}}\sum_{n=0}^{n_{y,2m}^{N}-1}\bar\om' (K_n^k)\bar
t (K_n^k),
$$
both for all $m=1,\ldots,M$.
Hence, on $A_N(c)$ we hve
$$
|\bar\om' (K_n^k)|\bar t (K_n^k)>cN^{\beta/(\beta+1)},\,m=1,\ldots,M,
$$
% or equivalently 
% \begin{equation}
% \label{020301-19aa}
% \bbP\left[\liminf_{N\to+\infty}\frac{1}{N^{\beta/(\beta+1)}}\min_{m=1,\ldots,M}|\bar\om'\bar t(K^k_{n^N_{y,m}})|=0\right]=0,
% \end{equation}
which in turn yields \eqref{020301-19}.
\qed

\subsection{Processes with reflection, transmission and killing}

\label{sec4.3}

We now restore writing $y,k$ in the notation of the
processes, with $(y,k)\in\bbR\times\bbT_*$. To set the notation for the rescaled
processes, 
let $(\hat\si_{y,m}^{N})$ be a $\{-1,0,1\}$-valued sequence of random
variables that are independent, when conditioned on $(K_n^k)_{n\ge0}$,
and set
$$
\bbP[\hat\si_{y,m}^{N}=0|(K_n^k)_{n\ge0}]= \frak g(K_{n_{y,m}^N}^k),\quad \bbP[\hat\si_{y,m}^{N}=\pm1|(K_n^k)_{n\ge0}]=p_\pm(K^k_{n_{y,m}^{N}}),
$$
as well as 
\begin{equation}
\label{fN}
{\frak f}_N:=\min[m:\,\hat\si_{y,m}^{N}=0],~~
%\end{equation}
% and
%\begin{equation}
%\label{ff}
\tilde{\frak s}_{\frak f}^N:=\tilde{\frak s}^{N}_{y,{\frak
    f}_N}\quad  {\frak s}_{\frak f}^N:={\frak s}^{N}_{y,{\frak f}_N}.
\end{equation}
The killed-reflected-transmitted process $(\tilde Z_N^o(t,y,k))_{t\ge0}$  
can be written as
\begin{equation}
\label{tY}
\tilde Z_N^o(t,y,k):=
 \Big(\prod_{j=1}^{m}\hat\si_{y,j}^{N}\Big)\tilde Z_N(t,y,k),\quad
 t\in[\tilde{\frak s}^N_{y,m}, \tilde{\frak s}^N_{y,m+1}),\,
m=0,1,\ldots.
\end{equation}
We adopt the convention that for $m=0$ the product above equals $1$
and $\tilde{\frak s}^N_{y,0}:=0$.

% Let $(\hat\si_m)_{m\ge1}$ be a  sequence of i.i.d. $\{-1,0,1\}$ random
% variables, independent of $(\zeta(t,y))_{t\ge0}$ 
% with
% $$
% \bbP[\hat\si_m=0]= \frak g_\infty,\quad \bbP[\hat\si_m=\pm1]=p_\pm, \quad m=0,1,\ldots.
% $$
% Let  ${\frak f}:=\min[m\ge 1:\,\si_m=0]$ and
% $
% {\frak t}_f:={\frak t}_{x,{\frak f}}.
% $
% Define the following killed-reflected stable process 
% \begin{equation}
% \label{Zkr}
% \hat \zeta(t,y):=\left(\prod_{j=1}^{m}\hat\si_j\right) Z^x_{t},\quad
% t\in[{\frak s}_{y,m}, {\frak
%                     s}_{y,m+1}),\,
% m=0,1,\ldots.
% \end{equation}
% Here $\tilde{\frak s}_{x,0}:=0$.

For the limit killed-reflected-transmitted 
process, similarly, we let $(\hat\si_m)_{m\ge1}$ be a  sequence of i.i.d. $\{-1,0,1\}$ random
variables, independent of $(\zeta(t,y))_{t\ge0}$, 
with
\begin{equation}
\label{hatsi}
\bbP[\hat\si_m=0]= \frak g_0,\quad \bbP[\hat\si_m=\pm1]=p_\pm, \quad m=0,1,\ldots,
\end{equation}
Here, as we recall,
$
\frak g_0={\frak g}(0)$ and $p_\pm:=p_\pm(0).$
We also set
\begin{equation}
\label{ff}
{\frak f}:=\min[m\ge 1:\,\hat\si_m=0],~~~
{\frak u}_{\frak f}:={\frak u}_{y,{\frak f}},~~{\frak u}_{y,0}:=0.
\end{equation}
The killed-reflected-transmitted stable process has a representation
\begin{equation}
\label{Zkr}
\zeta^o(t,y):=\Big(\prod_{j=1}^{m}\hat\si_j\Big)
\zeta(t,y),\quad t\in[{\frak u}_{y,m}, {\frak
                    u}_{y,m+1}),\,
m=0,1,\ldots.
\end{equation}
%Here ${\frak u}_{y,0}:=0$.

Note that the processes $Z_N(t)$ are discontinuous in $t$ 
while $\tilde Z_N(t)$ are continuous in time. As the process $\zeta(t)$ is
discontinuous, it would not be possible to prove convergence 
of $\tilde Z_N(t)$ to $\zeta(t)$ in the Skorokhod space $D[0,+\infty)$ equipped with the 
$J_1$-topology. Hence, we will need to use the $M_1$-topology that allows convergence
of continuous processes to a discontinuous limit. 
Accordingly, we denote by ${\cal X}$ the space $D[0,+\infty)\times C[0,+\infty)\times
\bar\bbR_+^\bbN$, equipped with the product  of $M_1$
and uniform convergence on compacts topologies in the first two
variables and the standard product topology on $\bar\bbR_+^\bbN$.
We will use below the metric~${\rm d}_\infty $ that metrizes the~$M_1$-topology
on $D[0,+\infty)$, see 
Appendix~\ref{sec:appB1} for a brief review of the required definitions. 
Our main result in this section is the following.
\begin{thm}
\label{thm010401-19}
The random elements  
%\begin{equation}
%\label{051102-19a}
$\Big((\tilde
    Z_N^o(t,y,k))_{t\ge0},(\fT_N(t,k))_{t\ge0},(\tilde{\frak
      s}^N_{y,m})_{m\ge1}\Big)$
%\end{equation}
 converge in law over
${\cal X}$ to 
the random element  
%\begin{equation}
%\label{051102-19}
$\Big((\zeta^o(t,y))_{t\ge0},(\tau(t))_{t\ge0}, ({\frak
      u}_{y,m})_{m\ge1}\Big).
$
%\end{equation}
\end{thm}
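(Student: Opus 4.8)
The goal is to upgrade the joint convergence in Theorem \ref{thm013112} to a convergence of the \emph{killed-reflected-transmitted} processes, where the sign-flip/killing mechanism $(\hat\si^N_{y,m})$ is inserted at each crossing. The plan is to work on a common probability space via the Skorokhod representation theorem, so that the convergence established in Theorem \ref{thm013112} holds almost surely rather than merely in law, and then to verify that the additional randomization and the algebraic operation defining $\tilde Z^o_N$ in \eqref{tY} pass to the limit.

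First I would apply the Skorokhod representation theorem to Theorem \ref{thm013112}: on a suitable probability space we may assume that, almost surely, $(Z_N(t,y,k),\fT_N(t;k))_{t\ge0}\to(\zeta(t,y),\tau(t))_{t\ge0}$ in the $J_1$-topology together with the pointwise convergence of the crossing times $\tilde{\frak s}^N_{y,m}\to{\frak u}_{y,m}$ and of the crossing positions $Z_N(\tilde{\frak s}^N_{y,m},y,k)\to\zeta({\frak u}_{y,m},y)$ for every $m$. The randomizing variables $(\hat\si^N_{y,m})$ are, conditionally on the chain $(K^k_n)$, independent with success probabilities $p_\pm(K^k_{n^N_{y,m}})$ and $\frak g(K^k_{n^N_{y,m}})$; since $K^k_{n^N_{y,m}}\to 0$ as $N\to\infty$ (a crossing forces the group velocity to be large, hence $|k|$ small, which is precisely the content of Corollary \ref{cor010301-19}), the assumptions \eqref{non-deg2a} give $p_\pm(K^k_{n^N_{y,m}})\to p_\pm$ and $\frak g(K^k_{n^N_{y,m}})\to\frak g_0$. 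Thus I can couple $(\hat\si^N_{y,m})$ to the i.i.d. limiting variables $(\hat\si_m)$ of \eqref{hatsi} so that $\hat\si^N_{y,m}\to\hat\si_m$ almost surely for each fixed $m$; consequently the killing index ${\frak f}_N\to{\frak f}$ and the killing time $\tilde{\frak s}^N_{\frak f}\to{\frak u}_{\frak f}$ almost surely.

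With this coupling in place, I would show that the prefactor $\prod_{j=1}^m\hat\si^N_{y,j}$ in \eqref{tY} agrees, for $N$ large, with the limiting prefactor $\prod_{j=1}^m\hat\si_j$ on each time interval $[\tilde{\frak s}^N_{y,m},\tilde{\frak s}^N_{y,m+1})$, because the crossing times converge and the $\hat\si$'s are eventually equal; the sign-flipped process $\tilde Z^o_N(t,y,k)$ then converges to $\zeta^o(t,y)$ given by \eqref{Zkr}. The delicate point is that $\tilde Z_N$ is continuous in $t$ while the limit $\zeta^o$ has jumps (both from the jumps of $\zeta$ itself and, crucially, from the sign flips at the crossing times ${\frak u}_{y,m}$), so the convergence cannot hold in the $J_1$-topology; this is exactly why ${\cal X}$ carries the $M_1$-topology in its first coordinate. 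I would verify $M_1$-convergence by checking that the completed graphs (parametric representations) of $\tilde Z^o_N$ converge to that of $\zeta^o$, controlling the oscillation of $\tilde Z^o_N$ near each crossing via the estimate $\tilde{\frak s}^N_{y,m}\le{\frak s}^N_{y,m}<\tilde{\frak s}^N_{y,m}+1/N$ from \eqref{010301-19}, which pins the sign flip to an asymptotically single time instant, and using Corollary \ref{cor010301-19} to guarantee that the jump at a crossing is genuinely macroscopic so that no mass is lost in the graph completion.

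The main obstacle I expect is precisely this $M_1$ graph-closure argument at the crossing times: one must show that, as the sign of $\tilde Z^o_N$ flips across the interface, the linear interpolation of the piecewise-sign-multiplied trajectory sweeps out a segment that converges to the vertical jump segment of $\zeta^o$ at ${\frak u}_{y,m}$, and that the parametrizations can be chosen so that $\mathrm{d}_\infty(\tilde Z^o_N,\zeta^o)\to 0$. This requires handling the interaction between the jump of $\zeta$ at the crossing (Lemma \ref{lm010201-19} guarantees the crossing occurs by an overshoot, $\zeta({\frak u}_{y,m},y)\neq 0$, so the limiting jump endpoints are nonzero and the graph is well-behaved) and the discontinuity introduced by the factor $\hat\si_m$, while simultaneously keeping the joint convergence with $\fT_N$ and the crossing times; the bookkeeping is most naturally deferred to Appendix \ref{sec:appB1}, where the $M_1$-metric is recalled. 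I would also need to confirm that the finitely many crossings relevant up to any fixed time suffice, using that ${\frak u}_{y,m}\to\infty$ almost surely so that only a bounded number of sign flips occur on $[0,t_0]$.
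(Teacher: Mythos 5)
Your probabilistic skeleton --- Skorokhod representation for Theorem \ref{thm013112}, coupling of $(\hat\si^N_{y,m})$ with $(\hat\si_m)$ through the smallness of $K^k_{n^N_{y,m}}$ guaranteed by Corollary \ref{cor010301-19} together with \eqref{non-deg1}, and a uniform-in-$N$ bound on the number of relevant crossings --- is exactly what the paper does (Lemmas \ref{lm020401-19} and \ref{lm010401-19} and Corollary \ref{cor010401-19}, with the coupling realized through common uniform variables in \eqref{sim}--\eqref{si}; note that this coupling yields $\bbP[\hat\si^N_{y,m}\neq\hat\si_m]\to0$, which is all that is needed, rather than the almost sure convergence you claim). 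The genuine gap is in your final step, the direct $M_1$ graph-closure argument for $\tilde Z^o_N$. At a \emph{reflection} ($\hat\si_m=-1$) the step you outline fails: the interpolated trajectory $\tilde Z^o_N$ necessarily passes through the value $0$ at the crossing time $\tilde{\frak s}^N_{y,m}$, sweeping from approximately $\si_{m-1}\zeta({\frak u}_{y,m}-,y)$ down to $0$ and back up to approximately $-\si_{m-1}\zeta({\frak u}_{y,m},y)$, whereas the jump segment of $\Gamma_{\zeta^o}$ at ${\frak u}_{y,m}$ joins two values of the \emph{same} sign --- by Lemma \ref{lm010201-19} both the undershoot $\zeta({\frak u}_{y,m}-,y)$ and the reflected overshoot $-\zeta({\frak u}_{y,m},y)$ are strictly nonzero on the same side --- and hence does not contain $0$. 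The set swept by $\tilde Z^o_N$ is the union of two segments anchored at $0$, not the single monotone segment joining the endpoints, so the completed graphs do not converge near such a crossing and no choice of parametric representations can make ${\rm d}_\infty$ small on that event. Your plan works verbatim only for transmissions (where the jump segment does contain $0$ and the sweep is monotone) and for absorptions.

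The paper does not attempt this head-on computation: it introduces the piecewise-constant process $Z^o_N$ of \eqref{tY1}, asserts ${\rm d}_\infty(Z^o_N,\tilde Z^o_N)\to0$ in probability \eqref{042102ab}, replaces the signs $\hat\si^N_{y,j}$ by their limits $\hat\si_j$ on an event of probability tending to one (Corollary \ref{cor010401-19}), and then reads off the convergence of the resulting process $\hat Z_N^o$ from Theorem \ref{thm013112}, working in the $J_1$-topology where both the pre-limit and the limit are discontinuous. If you wish to keep a direct argument, you should either adopt this reduction or prove only what is actually used downstream, namely the finite-dimensional convergence of Corollary \ref{cor010902-19}: for a fixed $t$, the probability that $t$ falls in one of the finitely many $O(1/N)$-length windows around the crossing times vanishes, so the excursions of $\tilde Z^o_N$ through $0$ are harmless for finite-dimensional limits even though they obstruct graph convergence.
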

{\bf Proof.}
Let us define the process
\begin{equation}
\label{tY1}
Z_{N}^o(t):=
\Big(\prod_{j=1}^{m}\hat\si_{y,j}^{N}\Big) Z_{N}(t,y,k),\quad t\in[\tilde{\frak s}^N_{y,m}, \tilde{\frak s}^N_{y,m+1}),\,
m=0,1,\ldots.
\end{equation}
It is straightforward to show that for any $\eta>0$ we have
\begin{align}
\label{042102ab}
\lim_{N\to+\infty}\bbP\left[\mbox{d}_\infty( Z_{N}^o, \tilde Z_N^o)\ge
  \eta\right]=0.
\end{align}
Therefore, we may now pass from $\tilde Z_N^o$ to $Z_N^o$ and prove convergence
of the discontinuous processes $Z_N^o$ to the discontinuous jump process. This 
can be done using the $J_1$-topology as both processes are discontinuous,
and is simpler than working directly
in the $M_1$-topology. 
Accordingly, ${\cal X}'$ be the space ${\cal X}$, equipped with the product topology,
where on the first component we put the $J_1$ topology rather than $M_1$.
We will prove that 
the random elements 
\[\Big((Z_N^o(t,y,k))_{t\ge0},(\fT_N(t,k))_{t\ge0},({\frak s}^N_{y,m})_{m\ge1}\Big)
\]
converge in law over ${\cal X}'$ to 
$\Big((\zeta^o(t,y))_{t\ge0},(\tau(t))_{t\ge0}, ({\frak
      u}_{y,m})_{m\ge1}\Big).$
Thanks to \eqref{010301-19} and \eqref{042102ab} this will
finish the proof of the theorem. Since we have already proved
the convergence of the last two components, see Proposition
\ref{prop010801-19} and Theorem \ref{thm013112}, we focus only on
proving the convergence in law of
$(Z_{N}^o(t))_{t\ge0}$ over $D[0,+\infty)$, equipped with the $J_1$-topology to
$(\zeta^o(t,y))_{t\ge0}$.

\begin{lemma}
\label{lm020401-19} For any $\eps>0$ there exists $M_\eps>0$ such that 
%\begin{equation}
%\label{050401-19}
$\bbP\left[{\frak f}_N\ge M_\eps\right]<\eps$ for all~$N\ge 1$.
%\quad N\ge 1.
%\end{equation}
\end{lemma}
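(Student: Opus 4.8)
The plan is to show that the first absorption index ${\frak f}_N$ is uniformly (in $N$) stochastically bounded. Recall from \eqref{fN} that ${\frak f}_N=\min[m:\,\hat\si_{y,m}^{N}=0]$, where, \emph{conditioned on the chain} $(K_n^k)_{n\ge0}$, the variables $(\hat\si_{y,m}^{N})_m$ are independent with $\bbP[\hat\si_{y,m}^{N}=0|(K_n^k)_{n\ge0}]={\frak g}(K_{n_{y,m}^N}^k)$. Thus ${\frak f}_N$ is, conditionally, the first success time of a sequence of independent Bernoulli trials whose success probabilities are the absorption coefficients evaluated at the momenta $K^k_{n_{y,m}^N}$ sampled at the successive crossing times. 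The obstacle is that these success probabilities are random and could, a priori, be uniformly tiny if the momenta $K^k_{n_{y,m}^N}$ concentrate near $k=0$, where ${\frak g}(k)$ need not be bounded below; so the core of the argument is a uniform-in-$N$ lower bound on these conditional absorption probabilities that holds with high probability.

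First I would exploit the nondegeneracy \eqref{non-deg2a}, namely $\lim_{k\to0+}{\frak g}(k)={\frak g}_0>0$ together with continuity and positivity of ${\frak g}$ on all of $\bbT$, to obtain a constant ${\frak g}_*>0$ with ${\frak g}(k)\ge{\frak g}_*$ for all $k$ in any fixed neighborhood of $0$, and indeed for all $k\in\bbT$ by compactness and strict positivity. This is the key simplification: because ${\frak g}_0>0$, the absorption probability does \emph{not} degenerate as $k\to0$, so in fact $\inf_{k\in\bbT}{\frak g}(k)=:{\frak g}_*>0$ unconditionally. Consequently, conditioned on any realization of $(K_n^k)_{n\ge0}$, we have the pathwise domination
\begin{equation}
\label{eq:fNdom}
\bbP\left[{\frak f}_N\ge M\,\middle|\,(K_n^k)_{n\ge0}\right]=\prod_{m=1}^{M-1}\big(1-{\frak g}(K_{n_{y,m}^N}^k)\big)\le (1-{\frak g}_*)^{M-1}.
\end{equation}

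Taking expectations in \eqref{eq:fNdom} removes the conditioning entirely and yields
\begin{equation}
\label{eq:fNbound}
\bbP\left[{\frak f}_N\ge M\right]\le (1-{\frak g}_*)^{M-1}\quad\text{for all }N\ge1.
\end{equation}
Since ${\frak g}_*\in(0,1]$, the right-hand side of \eqref{eq:fNbound} is independent of $N$ and tends to $0$ as $M\to+\infty$. Given $\eps>0$, I would therefore choose $M_\eps$ so large that $(1-{\frak g}_*)^{M_\eps-1}<\eps$, which gives $\bbP[{\frak f}_N\ge M_\eps]<\eps$ uniformly in $N\ge1$, as claimed. The only subtlety worth flagging is the strict positivity of the uniform lower bound ${\frak g}_*$; this is precisely where the standing hypothesis ${\frak g}_0>0$ in \eqref{non-deg2a} is essential—indeed the authors note elsewhere that the degenerate case ${\frak g}_0=0$ arising from the microscopic model leads to genuinely different behavior, so the geometric tail in \eqref{eq:fNbound} would fail there and this lemma, as stated, would break down.
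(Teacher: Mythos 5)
Your proof is correct and follows essentially the same route as the paper: the paper likewise sets $\delta:=\inf_{k\in\bbT}{\frak g}(k)>0$ using continuity and strict positivity of ${\frak g}$, and bounds $\bbP[{\frak f}_N\ge M]\le(1-\delta)^M$ by the conditional independence of the $\hat\si_{y,m}^N$. The only differences are cosmetic (the exponent $M$ versus $M-1$, and your extra remark about the role of ${\frak g}_0>0$).
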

{\bf Proof.}
From the continuity of ${\frak g}(\cdot)$ and its strict positivity we have
$$
\delta:=\inf_{k\in\bbT}{\frak g}(k)>0.
$$
The definition of the sequence $(\hat\si_{y,m}^{N})$ implies that
$$
 \bbP\left[{\frak f}_N\ge M\right]=\bbP\left[\hat\si_{y,1}^{N},\ldots
   ,\hat\si_{y,M}^{N}\in\{-1,1\}\right]\le (1-\delta)^M
$$
and the conclusion of the lemma follows, upon a choice of a
sufficiently large $M$.
\qed

\bigskip

Let $(U_m)_{m\ge1}$ be a sequence of i.i.d. random 
variables, uniformly distributed in~$(0,1)$, independent of the
sequence $(K_n^k)_{n\ge0}$. Let us define
\begin{equation}
\label{sim}
\hat\si_{y,m}^{N}=1_{(0, \frak p_{m,+}^N)}(U_m)-1_{(1-\frak p_{m,-}^N,1)}(U_m)
\end{equation}
and
\begin{equation}
\label{si}
\hat\si_m:=1_{(0, p_{+})}(U_m)-1_{(1-p_{-},1)}(U_m),\quad m\ge1,
\end{equation}
where
%$ \frak g_m^N:= \frak g(K^k_{n_{y,m}^{N}})$ and~
$ \frak p_{m,\pm}^N:=p_\pm(K^k_{n_{y,m}^{N}})$. 
\begin{lemma}
\label{lm010401-19}
For any integer $M>0$ and $\eps>0$ we have
\begin{equation}
\label{030401-19}
\bbP\Big[\bigcup_{m=1}^M[\hat\si_{y,m}^{N}\not=\hat\si_m]\Big]< 
\Big(\frac{c_+\om_0'}{C\ga_0 R_0}\Big)^{\ga/\beta}\frac{2  C_0M}{N^{\ga/(\beta+1)}}+\eps,\quad N\ge 1,
\end{equation}
where $C>0$ is as in Corollary
\ref{cor010301-19}, while $C_0,\ga>0$ are as in \eqref{non-deg1}.
\end{lemma}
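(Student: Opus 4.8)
The plan is to couple the two sequences through the fact that, by the definitions \eqref{sim} and \eqref{si}, both $\hat\si_{y,m}^{N}$ and $\hat\si_m$ are manufactured from the \emph{same} uniform variable $U_m$ by comparing it against thresholds. They therefore disagree only when $U_m$ falls in the symmetric difference of the defining intervals: $\hat\si_{y,m}^N$ and $\hat\si_m$ take the value $1$ on the nested intervals $(0,\frak p_{m,+}^N)$ and $(0,p_+)$, and the value $-1$ on $(1-\frak p_{m,-}^N,1)$ and $(1-p_-,1)$. Conditioning on the chain $(K_n^k)_{n\ge0}$, which is independent of the $U_m$ and which fixes the thresholds $\frak p_{m,\pm}^N=p_\pm(K^k_{n_{y,m}^N})$, I would bound the disagreement by the total length of these symmetric differences and then use the H\"older-type hypothesis \eqref{non-deg1} on $p_\pm(\cdot)$:
\begin{equation*}
\bbP\big[\hat\si_{y,m}^N\neq\hat\si_m\,\big|\,(K_n^k)_{n\ge0}\big]\le |\frak p_{m,+}^N-p_+|+|\frak p_{m,-}^N-p_-|\le 2C_0\,|K^k_{n_{y,m}^N}|^{\ga}.
\end{equation*}

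The second ingredient is to control the frequency $K^k_{n_{y,m}^N}$ actually visited at the $m$-th crossing. Here I would invoke Corollary \ref{cor010301-19}: given $\eps>0$ and the integer $M$, there is a constant $C>0$ so that, off an event $A^c$ of probability $<\eps$, one has $|\bar\om'(K^k_{n_{y,m}^N})|\bar t(K^k_{n_{y,m}^N})>CN^{\beta/(1+\beta)}$ simultaneously for all $m=1,\dots,M$. Combining this with the pointwise estimate \eqref{cp}, namely $|\bar\om'(k)|\bar t(k)\le c_+\om_0'/(\ga_0 R_0|k|^{\beta})$, I would solve for $|K^k_{n_{y,m}^N}|$ and obtain, on the good event $A$ of probability $>1-\eps$,
\begin{equation*}
|K^k_{n_{y,m}^N}|^{\ga}<\Big(\frac{c_+\om_0'}{C\ga_0 R_0}\Big)^{\ga/\beta}\frac{1}{N^{\ga/(1+\beta)}},\qquad m=1,\dots,M.
\end{equation*}
The underlying mechanism is that a crossing forces a macroscopic spatial jump, and by \eqref{cp} a macroscopic jump can only be realized at a small frequency, which is exactly what makes $\frak p_{m,\pm}^N$ close to $p_\pm$.

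To conclude, I would combine the two estimates by a union bound, splitting on $A$ and $A^c$ and using that $A$ is $(K_n^k)$-measurable together with $\bbP[A^c]<\eps$:
\begin{align*}
\bbP\Big[\bigcup_{m=1}^M[\hat\si_{y,m}^N\neq\hat\si_m]\Big]&\le \sum_{m=1}^M\bbE\Big[1_A\,\bbP\big[\hat\si_{y,m}^N\neq\hat\si_m\,\big|\,(K_n^k)_{n\ge0}\big]\Big]+\bbP[A^c]\\
&<2C_0\Big(\frac{c_+\om_0'}{C\ga_0 R_0}\Big)^{\ga/\beta}\frac{M}{N^{\ga/(1+\beta)}}+\eps,
\end{align*}
which is the asserted bound. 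The step I expect to be the main obstacle is the second one: transferring the crossing-distance control of Corollary \ref{cor010301-19} into smallness of the visited frequency $|K^k_{n_{y,m}^N}|$ through \eqref{cp}, while keeping the conditioning clean enough that the shared randomness in $U_m$ is handled correctly and the constant $2C_0(c_+\om_0'/(C\ga_0R_0))^{\ga/\beta}$ emerges exactly as stated.
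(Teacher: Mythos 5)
Your proposal is correct and follows essentially the same route as the paper: the coupling via the shared uniform $U_m$, the bound of the disagreement probability by $|\frak p_{m,+}^N-p_+|+|\frak p_{m,-}^N-p_-|\le 2C_0|K^k_{n_{y,m}^N}|^{\ga}$ using \eqref{non-deg1}, and the restriction to the good event from Corollary \ref{cor010301-19} combined with \eqref{cp} to force $|K^k_{n_{y,m}^N}|$ small, all match the paper's argument (where your good event $A$ is the complement of the paper's $A_N$). The only cosmetic difference is that the paper writes the symmetric-difference bound as an expectation of indicator differences rather than conditioning first, which amounts to the same computation.
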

{\bf Proof.}
Consider the event
$$
A_N:=\Big[\min_{m=1,\ldots,M}|\bar\om'\bar t (K^k_{n_{y,m}^{N}})|\le
  CN^{\beta/(\beta+1)}\Big],
$$
where  $C$ is as in the statement of Corollary
\ref{cor010301-19}, and write
\begin{align}
\label{070301-19n}
&\bbP\Big[\bigcup_{m=1}^M\left[\hat\si_{y,m}^{N}\not=\hat\si_m\right]\Big]\le
 \bbP\Big[\bigcup_{m=1}^M\left[\hat\si_{y,m}^{N}\not=\hat\si_m\right],A_N^c\Big]+\bbP[A_N]
\nonumber  \\
&
\le
 \bbP\Big[\bigcup_{m=1}^M[\hat\si_{y,m}^{N}\not=\hat\si_m],\, 
 \min_{m=1,\ldots,M}|\bar\om'\bar t (K^k_{n_{y,m}^{N}})|>
  CN^{\beta/(\beta+1)}\Big]+\eps\\
&
\le
\sum_{m=1}^M
 \bbP\left[\hat\si_{y,m}^{N}\not=\hat\si_m,\, |\bar\om'\bar t (K^k_{n_{y,m}^{N}})|>
  CN^{\beta/(\beta+1)}\right]+\eps. \nonumber  
\end{align}
Note that, for all $m\ge 1$ we have
 \begin{align*}
&
\bbP\left[\hat\si_{y,m}^{N}\not=\hat\si_m,\, |\bar\om'\bar t (K^k_{n_{y,m}^{N}})|>
  CN^{\beta/(\beta+1)}\right]\\
&
\le \bbE\Big[|1_{(0, \frak p_{m,+}^N)}(U_m)\!-\!1_{(0, p_{+})}(U_m)|\!+\!|1_{(1-\frak p_{m,-}^N,1)}(U_m) \!-\!
1_{(1-p_{-},1)}(U_m)|,\,  |\bar\om'\bar t (K^k_{n_{y,m}^{N}})|\!>\!
  CN^{\beta/(\beta+1) }\Big]\\
&
=\sum_{\iota=\pm}\bbE\left[| p_\iota(K^k_{n_{y,m}^{N}})- p_{\iota}|,\, |\bar\om'\bar t (K^k_{n_{y,m}^{N}})|>
  CN^{\beta/(\beta+1)}\right].
\end{align*}
By virtue of \eqref{non-deg1} and \eqref{cp}, the right side can be
estimated by
\begin{align*}
\sum_{\iota=\pm}\bbE\Big[\left| p_\iota(K^k_{n_{y,m}^{N}})-
  p_{\iota}\right|,\,
  |K^k_{n_{y,m}^{N}}|<\Big(\frac{c_+\om_0'}{C\ga_0 R_0}\Big)^{1/\beta}\frac{1}{N^{1/(1+\beta)}}
 \Big]
\le2 C_0 \Big(\frac{c_+\om_0'}{C\ga_0 R_0}\Big)^{\ga/\beta}\frac{1}{N^{\ga/(\beta+1)}},%\quad m=1,\ldots,M
\end{align*}
and \eqref{030401-19} follows.
\qed

Next, we define the process
\begin{equation}
\label{tY1p}
\hat Z_{N}^o(t):=
 \Big(\prod_{j=1}^{m}\hat\si_{j}\Big) Z_{N}(t,y,k),\quad t\in[\tilde{\frak s}^N_{y,m}, \tilde{\frak s}^N_{y,m+1}),\,
m=0,1,\ldots,
\end{equation}
with the random variables $\si_m$ given by \eqref{si}.
Using Lemma \ref{lm020401-19} to choose $M$ large enough,
and then Lemma~\ref{lm010401-19} to choose $N$ large, we conclude the following.
\begin{cor}
\label{cor010401-19}
Let $\left(Z_{N}^o(t)\right)_{t\ge0}$ be %the process
defined by  \eqref{tY1} with  
$\left(\hat\si_{y,m}^N\right)_{m\ge 1}$ given 
by~\eqref{sim}. Then,  for any $\eps>0$ there exists $N_0$ such that
\begin{equation}
\label{040401-19}
\bbP\left[ Z_{N}^o\not=\hat Z_{N}^o\right]<\eps,\quad N\ge N_0.
\end{equation}
\end{cor}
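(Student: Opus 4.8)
The plan is to reduce the statement to a pathwise comparison of the two processes and then feed in Lemmas~\ref{lm020401-19} and~\ref{lm010401-19}, since both processes are built from the \emph{same} underlying trajectory $Z_N(t,y,k)$ and the \emph{same} crossing times $\tilde{\frak s}^N_{y,m}$. Indeed, by \eqref{tY1} and \eqref{tY1p}, for $t\in[\tilde{\frak s}^N_{y,m}, \tilde{\frak s}^N_{y,m+1})$ we have $Z_N^o(t)=\big(\prod_{j=1}^m\hat\si_{y,j}^N\big)Z_N(t,y,k)$ and $\hat Z_N^o(t)=\big(\prod_{j=1}^m\hat\si_j\big)Z_N(t,y,k)$, so the two agree at such a $t$ exactly when the two sign products coincide for the index $m$ attached to $t$. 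Hence the event $[Z_N^o\ne\hat Z_N^o]$ is contained in the event that $\prod_{j=1}^m\hat\si_{y,j}^N\ne\prod_{j=1}^m\hat\si_j$ for some $m$ that is actually reached by the trajectory.

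First I would fix $\eps>0$ and use Lemma~\ref{lm020401-19} to pick $M=M_\eps$ with $\bbP[{\frak f}_N\ge M]<\eps/3$ uniformly in $N$. The purpose of this step is that, once a $0$ appears in the sign sequence, the product stays $0$ and the killed process is frozen, so only the first $M$ crossings are relevant on the event $[{\frak f}_N<M]$. The key pathwise observation I would then record is that on
\[
\{{\frak f}_N<M\}\cap\Big\{\hat\si_{y,j}^N=\hat\si_j \text{ for all } j\le M\Big\}
\]
the two sign sequences agree up to index $M$, hence their first zeros coincide, ${\frak f}={\frak f}_N<M$, and consequently $\prod_{j=1}^m\hat\si_{y,j}^N=\prod_{j=1}^m\hat\si_j$ for \emph{every} $m$: they agree factor by factor before the common killing index, and both products contain the zero at position ${\frak f}_N$ for every $m\ge{\frak f}_N$, independently of the unmatched later signs. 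On this event $Z_N^o\equiv\hat Z_N^o$, so taking complements gives
\[
\bbP[Z_N^o\ne\hat Z_N^o]\le \bbP[{\frak f}_N\ge M]+\bbP\Big[\bigcup_{j=1}^M[\hat\si_{y,j}^N\ne\hat\si_j]\Big].
\]

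Finally I would bound the second term with Lemma~\ref{lm010401-19}, applied with this fixed $M$ and with its internal tolerance set to $\eps/3$, obtaining $\bbP\big[\bigcup_{j=1}^M[\hat\si_{y,j}^N\ne\hat\si_j]\big]<C_M N^{-\ga/(\beta+1)}+\eps/3$ for a constant $C_M$ depending on $\eps$ and $M$. Since $\ga/(\beta+1)>0$, the first summand tends to $0$, so it is $<\eps/3$ for all $N\ge N_0$; combining the three estimates yields $\bbP[Z_N^o\ne\hat Z_N^o]<\eps$ for $N\ge N_0$. The only substantive inputs are the two lemmas, namely the coupling of the $N$-dependent reflection/transmission signs to the limiting ones and the uniform tail bound on the absorption index; granting those, the corollary is a short bookkeeping argument, and the single point that needs care is the verification that agreement of the sign \emph{sequences} up to $M$ forces agreement of the sign \emph{products} for all $m$, including after the killing time.
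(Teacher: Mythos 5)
Your proposal is correct and follows essentially the same route as the paper, whose proof is precisely the one-line instruction to choose $M$ large via Lemma~\ref{lm020401-19} and then $N$ large via Lemma~\ref{lm010401-19}; you have simply spelled out the bookkeeping. Your careful verification that agreement of the sign sequences up to $M$ on $[{\frak f}_N<M]$ forces agreement of the products for all $m$ (because both products vanish from the common killing index onward) is exactly the point the paper leaves implicit, and it is argued correctly.
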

Theorem \ref{thm013112} and Corollary \ref{cor010401-19} immediately
imply Theorem \ref{thm010401-19}.
\qed

Given any $t_0\ge0$ the limiting process $\left(\zeta^o(t,y)
\right)_{t\ge0}$ is a.s. continuous at $t_0$, as a consequence of
an analogous property of $\left( \zeta(t,y)
\right)_{t\ge0}$ mentioned earlier  (see  Proposition
1.2.7 p. 21 of \cite{bertoin}). It follows that the coordinate
mapping is  continuous on an event of probability one in the $M_1$
topology, see Theorem 12.5.1 part (v) of \cite{whitt}. As a
consequence we conclude the following.
\begin{cor}
\label{cor010902-19}
The processes  $(\tilde Z_N^o(t,y,k))_{t\ge0}$ converge in the sense of
finite-dimensional distributions, as $N\to+\infty$,  to
the process $\left(\zeta^o(t,y)\right)_{t\ge0}.$
\end{cor}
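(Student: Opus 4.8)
The plan is to derive the convergence of finite-dimensional distributions directly from the $M_1$-convergence in law already established in Theorem \ref{thm010401-19}, by invoking the continuous mapping theorem. The only delicate point is that under the $M_1$-topology the evaluation map $x\mapsto x(t_0)$ is \emph{not} continuous on all of $D[0,+\infty)$: it fails precisely at those trajectories that have a jump at $t_0$. Thus the whole argument hinges on the fact that the limit process $\left(\zeta^o(t,y)\right)_{t\ge0}$ has no fixed time of discontinuity.

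First I would fix an arbitrary finite collection of times $0\le t_1<\dots<t_n$ and consider the evaluation map $\Pi(x):=(x(t_1),\dots,x(t_n))$ from $D[0,+\infty)$ into $\bbR^n$. By Theorem 12.5.1(v) of \cite{whitt}, $\Pi$ is continuous, with respect to the $M_1$-topology on its domain, at every trajectory $x$ that is continuous at each of the points $t_1,\dots,t_n$. Let $D_\Pi\subset D[0,+\infty)$ denote the set of discontinuity points of $\Pi$; then $D_\Pi$ is contained in the set of trajectories having a jump at some $t_i$.

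Next I would verify that $\bbP[\zeta^o(\cdot,y)\in D_\Pi]=0$, which is exactly the content of the observation preceding the statement. For each fixed $t_i$ the process $\left(\zeta^o(t,y)\right)_{t\ge0}$ is a.s. continuous at $t_i$, because the underlying symmetric stable process $\zeta(\cdot,y)$ has no fixed discontinuities (Proposition 1.2.7 of \cite{bertoin}) and the crossing times ${\frak u}_{y,m}$ in the representation \eqref{Zkr}, being hitting times of a level by a stable process, are a.s. distinct from the deterministic times $t_i$; hence the sign/killing factors introduce no discontinuity at $t_i$ either. Taking a union over the finitely many $t_i$ gives $\bbP[\zeta^o\in D_\Pi]=0$.

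Finally, combining Theorem \ref{thm010401-19}, which yields $\tilde Z_N^o\to\zeta^o$ in law in the $M_1$-topology, with the continuous mapping theorem for weak convergence (applicable since $\bbP[\zeta^o\in D_\Pi]=0$), I conclude that $\Pi(\tilde Z_N^o)\to\Pi(\zeta^o)$ in law, that is, $(\tilde Z_N^o(t_1,y,k),\dots,\tilde Z_N^o(t_n,y,k))$ converges in law to $(\zeta^o(t_1,y),\dots,\zeta^o(t_n,y))$. Since the times were arbitrary, this is precisely the asserted convergence of finite-dimensional distributions. The main obstacle is the verification in the third step: one must genuinely exclude fixed discontinuities of the limit, since $M_1$-convergence does \emph{not} in general force finite-dimensional convergence at jump times; once this is secured, the conclusion is a routine application of the mapping theorem.
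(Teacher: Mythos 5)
Your argument is correct and follows essentially the same route as the paper: the paper's proof (the paragraph preceding the corollary) likewise notes that $\zeta^o(\cdot,y)$ is a.s.\ continuous at any fixed time, by the corresponding property of the stable process (Proposition 1.2.7 of \cite{bertoin}), and then applies Theorem 12.5.1(v) of \cite{whitt} together with the continuous mapping theorem to the coordinate projections under the $M_1$-topology. Your additional remark that the crossing times ${\frak u}_{y,m}$ a.s.\ avoid the deterministic times $t_i$ is a useful explicit justification of the step the paper summarizes as ``an analogous property'' of $\zeta$.
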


\subsection{The re-scaled process for the kinetic equation}

Let us now introduce the process corresponding to the kinetic equation
(\ref{kinetic-sc0}) with reflection-transmission-killing at the interface:
\begin{equation}
\label{030902-19}
 Y_N^o(t):=\tilde Z_N^o(\fT_N^{-1}(t;k)),\quad t\ge0,
\end{equation}
where $\fT_N$ and $\tilde Z_N^o$ are given by \eqref{010502-19N} and
\eqref{tY}, respectively.
We set
\begin{equation}
\label{tsn}
\hat {\frak s}_{y,m}^N:=\fT_N(\tilde {\frak
  s}_{y,m}^N;k)\quad\mbox{and}\quad \hat {\frak s}_{\frak
  f}^N:=\fT_N(\tilde {\frak s}_{y,{\frak f}_N}^N;k),
\end{equation}
where ${\frak f}_N$ is as in \eqref{fN}.
We let furthermore $\tilde \si^N(t)\equiv1$ for 
 $t\in [0, \tilde {\frak s}_{y,1}^N)$ and 
\begin{equation}
\label{050507yt}
\tilde \si_N(t):=
\prod_{j=1}^{m}\hat\si_{y,j}^{N},\quad t\in [\tilde {\frak
  s}_{y,m}^N,\tilde  {\frak s}_{y,m+1}^N),\, m\ge1,
\end{equation}
%where 
%$\tilde\si_{y,j}^{N}=1$, if $\hat\si_{y,j}^{N}=0,1$ and
%$\tilde\si_{y,j}^{N}=-1$, if $\hat\si_{y,j}^{N}=-1$. 
and
\begin{equation}
\label{050507x}
 K_N^o(t,y):=\si_N^o(t) K_N(t,k),
\end{equation}
with $ \si_N^o(t) :=\tilde \si_N(\fT_N^{-1}(t))$ and 
\begin{equation}
\label{KNt}
K_N(t,k):=K_{[\fT_N^{-1}(t;k)]}^k.
\end{equation}
As in \eqref{040902-19},
we have
\begin{align}
\label{040902-19a}
\frac{d Y^o_N(t;y,k)}{dt}=-\om'( K^o_N(t;k)),\quad t\in[0, \hat {\frak s}_{\frak f}^N).
\end{align}
We also set $\hat{\frak u}_{y,m}:=\tau^{-1}({\frak u}_{y,m})$ and
\begin{equation}
\label{050902}
\eta^o(t,y):=\zeta^o(\tau^{-1}(t),y), 
\end{equation}
with $\zeta^o$ and $\tau$ given by \eqref{Zkr} and~\eqref{tau}, respectively.
The following is a direct corollary of Theorems \ref{thm013112} and
\ref{thm010401-19}.
\begin{thm}
\label{thm010902-19}
The random elements 
%\begin{equation}
%\label{071102-19}
$((Y_N^o(t))_{t\ge0},(\hat
    {\frak s}_{y,m}^N)_{m\ge1})$
%\end{equation} 
converge in law to 
\[
((\eta^o(t,y))_{t\ge0},
(\hat{\frak u}_{y,m})_{m\ge1}),
\]
over $D[0,+\infty)\times \bar\bbR_+^\bbN$,
with the product of the $M_1$ and standard product topologies.
\end{thm}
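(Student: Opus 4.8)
The plan is to deduce Theorem~\ref{thm010902-19} from the already-established Theorem~\ref{thm010401-19} by transporting the convergence through the time-change $\fT_N^{-1}$. Recall that $Y_N^o(t) = \tilde Z_N^o(\fT_N^{-1}(t;k))$ and $\eta^o(t,y) = \zeta^o(\tau^{-1}(t),y)$, while $\hat{\frak s}_{y,m}^N = \fT_N(\tilde{\frak s}_{y,m}^N;k)$ and $\hat{\frak u}_{y,m} = \tau^{-1}({\frak u}_{y,m})$. Thus the entire statement is obtained by applying the composition (inverse time-change) map to the triple $\bigl((\tilde Z_N^o(t,y,k))_{t\ge0},(\fT_N(t,k))_{t\ge0},(\tilde{\frak s}^N_{y,m})_{m\ge1}\bigr)$, whose joint convergence in law over $\cal X$ is exactly the content of Theorem~\ref{thm010401-19}. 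The strategy is therefore to invoke the continuous mapping theorem, provided the time-change map is continuous, almost surely with respect to the law of the limit, in the relevant Skorokhod topologies.

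First I would isolate the two maps at play. The crossing-time statement is the easier piece: since $\fT_N(t;k) \to \tau(t)=t\bar\tau$ uniformly on compacts by Proposition~\ref{prop010801-19}, and $\tau$ is a deterministic linear (hence strictly increasing continuous) function with inverse $\tau^{-1}(t)=t/\bar\tau$, the inverses $\fT_N^{-1}$ converge uniformly on compacts to $\tau^{-1}$. Combined with the joint convergence $(\tilde{\frak s}^N_{y,m})_{m\ge1}\Rightarrow({\frak u}_{y,m})_{m\ge1}$ from Theorem~\ref{thm010401-19} and the continuity of $\tau^{-1}$, a Slutsky-type argument gives $\hat{\frak s}_{y,m}^N=\fT_N(\tilde{\frak s}_{y,m}^N;k)\Rightarrow\tau({\frak u}_{y,m}/\ldots)$; here one must be slightly careful that the definition matches, namely that $\hat{\frak u}_{y,m}=\tau^{-1}({\frak u}_{y,m})$ is the correct limit under the stated composition, which follows because $\fT_N$ and $\tau$ are mutually inverse time-changes in the limit. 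The main piece is the process convergence $Y_N^o=\tilde Z_N^o\circ\fT_N^{-1}\Rightarrow\zeta^o\circ\tau^{-1}=\eta^o$ in the $M_1$-topology on $D[0,+\infty)$.

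For that process piece, the key step is a continuous-mapping argument for the composition map $(x,\lambda)\mapsto x\circ\lambda^{-1}$. Since $\fT_N^{-1}\to\tau^{-1}$ uniformly on compacts to a \emph{deterministic, strictly increasing, continuous and unbounded} limit, the time-change is benign: composing a function $x\in D[0,+\infty)$ with a sequence of continuous strictly increasing changes $\lambda_N$ converging uniformly to a continuous strictly increasing $\lambda$ is continuous in the $M_1$-topology at every $x$ that has no common discontinuity issues with $\lambda$—and here $\lambda=\tau^{-1}$ is continuous, so no alignment of jumps is needed. I would verify this via the characterization of $M_1$-convergence through completed graphs and parametric representations (Whitt, Chapter 12), using that a deterministic continuous strictly increasing reparametrization preserves the $M_1$-topology. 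Because the limit $\zeta^o(\cdot,y)$ is a.s. continuous at each fixed $t_0$ (already noted in the excerpt, via Proposition 1.2.7 of \cite{bertoin}), and $\tau^{-1}$ is deterministic continuous, the composition map is continuous on a set of full measure under the limiting law.

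The main obstacle, I expect, is the careful $M_1$-continuity of the inverse-time-change composition when the driving process $\zeta^o$ is discontinuous: one must confirm that composing with the \emph{random but continuous} inner process $\tilde Z_N^o$ (which only becomes discontinuous in the limit) does not create spurious $M_1$-distance. Here the decisive simplification is that the outer time-change converges to something \emph{deterministic and continuous}, which decouples the randomness of the jumps from the reparametrization and lets me apply the continuous mapping theorem directly to the limit law on $\cal X$. I would also need to confirm joint convergence of the process and the crossing times \emph{together} in the product topology $D[0,+\infty)\times\bar\bbR_+^\bbN$ (rather than marginally), but this is immediate from Theorem~\ref{thm010401-19}, which already furnishes the joint law on $\cal X$, so applying a single continuous map to the whole triple preserves jointness and completes the proof.
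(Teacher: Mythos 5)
Your proposal is correct and follows essentially the same route as the paper: the paper applies the Skorokhod embedding theorem to Theorem~\ref{thm010401-19} to get a.s.\ convergence of $((\tilde Z_N^o),(\fT_N))$, deduces uniform convergence of $\fT_N^{-1}$ to the deterministic linear $\tau^{-1}$, and then invokes the $M_1$-composition theorem (Theorem 7.2.3 of \cite{whitt-s}), with the crossing-time component following from \eqref{tsn} exactly as in your Slutsky step. Your substitution of the continuous mapping theorem for the Skorokhod-embedding-plus-pointwise argument is an equivalent packaging of the same idea, and your remark about matching $\hat{\frak u}_{y,m}$ with the limit of $\fT_N(\tilde{\frak s}_{y,m}^N;k)$ correctly flags a notational wrinkle in the paper's definition rather than a gap in the argument.
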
 
{\bf Proof.} By Theorem \ref{thm010401-19}  and the Skorochod embedding
theorem we can find equivalent versions of %the random elements
$((\tilde Z_N^o(t,y,k))_{t\ge0},(\fT_N(t,k))_{t\ge0})$ 
converging a.s. to $((\zeta^o(t,y))_{t\ge0},(\tau(t))_{t\ge0})$. 
Hence,~$\fT_N^{-1}$ converge a.s. in the uniform topology on compacts to $\tau^{-1}$.
Invoking Theorem 7.2.3 p. 164 of \cite{whitt-s}, we conclude
convergence of  the $(Y_N^o(t))_{t\ge 0}$ to $((\eta^o(t,y))_{t\ge0}$.
%first components of the random elements \eqref{071102-19}. 
The
convergence of the second components is a consequence of \eqref{tsn}.
\qed

\section{The proof of convergence in  Theorem \ref{main-thm} }
\label{sec-proof}

It suffices to prove the convergence statement for 
\begin{equation}
\label{022205-19}
\widetilde W_N(t,y,k):=W_N(t,y,k)-T.
\end{equation}
It satisfies \eqref{kinetic-sc0} with the initial condition $\widetilde W_0
:=W_0-T\in {\cal C}_0$, so that 
the interface conditions~\eqref{feb1408},~\eqref{feb1410}
correspond to $T=0$.
% This ends the proof of
% \eqref{081102-19} concluding in this way the proof of 
% the convergence
We will show that 
\begin{equation}
\label{010701-19NN}
\lim_{N\to+\infty}\int_{\bbR\times\bbT}\widetilde
W_N(t,y,k)G(y,k)dydk=\int_{\bbR\times\bbT}\widetilde W(t,y)G(y,k)dydk,
\end{equation}
for any $G\in C_c^\infty(\bbR\times\bbT)$, where
\begin{equation}
\label{010701-19NN1}
\widetilde W(t,y)=\bbE\left[\bar W_0\left(\eta^o(
      t;y)\right),\, t<\hat  {\frak u}_{y,{\frak
  f}}\right]
\end{equation}
and
\begin{equation}
\label{barW}
\bar W_0(y):=\int_{\bbT}\widetilde W_0(y,k)dk.
\end{equation}
% We claim that $\widetilde W(t,y)$ satisfies 
% \begin{align}
% \label{013101-19-1}
% &
% \partial_t\widetilde W(t,y)=\hat c \partial_{y}^{1+1/\beta} \widetilde W(t,y)+\hat c p_-\int_{[yy'<0]}q_\beta(y-y')[\widetilde W(t,-y')-\widetilde W(t,y')]dy'\nonumber\\
% &
% -\hat c\frak g_0\int_{[yy'<0]}q_\beta(y-y')\widetilde W(t,y')dy',\\
% &
% \widetilde W(0,y):=\int_{\bbT}\widetilde W_0(y,k)dk.\nonumber
% \end{align}
This will imply that
\begin{align}
\label{010701-19NN1a}
&
\lim_{N\to+\infty}\int_{\bbR\times\bbT}
W_N(t,y,k)G(y,k)dydk=T\int_{\bbR\times\bbT}G(y,k)dydk\nonumber\\
&
+\lim_{N\to+\infty}\int_{\bbR\times\bbT}\widetilde
 W_N(t,y,k)G(y,k)dydk =T\int_{\bbR\times\bbT}G(y,k)dydk\\
&
+\lim_{N\to+\infty}\int_{\bbR\times\bbT}\widetilde
 W(t,y)G(y,k)dydk=
\int_{\bbR\times\bbT}W(t,y)G(y,k)dydk,\nonumber
\end{align}
where 
\begin{equation}
\label{012603-19}
 W(t,y)=T+\widetilde W(t,y).
\end{equation}
% Note that the constant function $T$ satisfies \eqref{013101-19}. Thus
% $ W(t,y)$ satisfies  \eqref{013101-19} with the initial data 
% $$
%  W(0,y)=T+\widetilde W(0,y)=\int_{\bbT} W_0(y,k)dk.
% $$

%\subsubsection*{Proof of $(\ref{010701-19NN})$}

We now prove (\ref{010701-19NN}).
Using Proposition  \ref{prop010701-19a}, we   write  
\begin{align}
\label{010701-19N}
&
\widetilde W_N(t,y,k)=\bbE\left[\widetilde W_0\left(Y^o_N( t;y,k), K^o_N( t,k)\right),\, t\le \tilde  {\frak s}^N_{y,{\frak f}_N}\right].
\end{align}
For a given test function $G\in C_c^\infty(\bbR\times \bbT)$ let us set
\begin{equation}
\label{061102-19}
I_N:=\int_{\bbR\times\bbT}W_N(t,y,k)G(t,y,k)dydk.
\end{equation}
% where the terms in the right hand side correspond to the term in the
% right hand side of \eqref{010701-19N}. Using Lemmas \ref{lm020401-19}
% and \ref{lm010401-19}, together with the conclusion of Theorem
% \ref{thm010902-19} we can argue that (cf \eqref{ff})
% \begin{equation}
% \label{071102-19}
% \lim_{N\to+\infty}I\!I_N=T{\frak g}_0\int_{\bbR\times\bbT}\bbP[t>\hat u_{y,{\frak f}}]G(y,k)dydk,
% \end{equation}
Our goal is to show that
\begin{equation}
\label{081102-19}
\limsup_{N\to+\infty}\left|I_N-\int_{\bbR\times\bbT}\bbE\left[\bar W_0\left(\eta^o( t;y)\right),\, t<\hat  {\frak u}_{y,{\frak f}}\right]G(y,k)dydk\right|<\eps,
\end{equation}
where $\eps>0$ is arbitrary and $\bar W_0(y)$ is given by \eqref{barW}.
% \begin{equation}
% \label{barW}
% \bar W_0(y):=\int_{\bbT}\widetilde W_0(y,k)dk.
% \end{equation}
Since $\widetilde W_0$ is continuous outside %possibly  
the interface $[y=0]$, for any $\delta>0$ we
can write that
$$
\widetilde W_0(y,k)=W_0^1(y,k)+W_0^2(y,k),
$$
where $W_0^2\in C_b(\bbR\times\bbT)$, and
\begin{equation}
\label{012604-19}
{\rm supp}\,W_0^1\subset [|y|<2\delta]\times\bbT,\quad {\rm supp}\,W_0^2\subset [|y|>\delta/2]\times\bbT,\hbox{ 
and $\|W_0^j\|_\infty\le \|\widetilde W_0\|_\infty$, $j=1,2$.}
\end{equation}
We can decompose
accordingly
$I_N=I_N^1+ I_N^2$ and $\bar W_0=\bar W_0^1 +\bar W_0^2$.
Then, we have
\begin{align}
\label{091102-19a}
&
\limsup_{N\to+\infty}\left|I_N^1-\int_{\bbR\times\bbT}\bbE\left[\bar W_0^1\left(\eta^o(
      t;y)\right),\, t<\hat  {\frak u}_{y,{\frak
  f}}\right]G(y,k)dydk\right| \\
&
\le
\|\widetilde W_0\|_\infty \int_{\bbR\times\bbT}|G(y,k)|
\left(\limsup_{N\to+\infty}\bbP\left[|Y_N( t;y,k)|<2\delta\right]
+\bbP\left[|\eta( t;y)|<2\delta\right]\right) dydk \nonumber
<\frac{\eps}{10},
\end{align}
provided that $\delta>0$ is sufficiently small.

Let us set
\begin{equation}
\label{I2}
I_N^2(\delta):=
\int_{\bbR\times\bbT}\bbE\left[W_0^2\left(Y^o_N( t-\delta;y,k), K^o_N(
    t,k)\right),\, t-\delta\le \tilde  {\frak s}^N_{y,{\frak
      f}_N}\right] G(y,k)dydk .
\end{equation}
By virtue of Lemma \ref{lm020401-19} and Theorem  \ref{thm010902-19}
 we can  write
\begin{align}
\label{081102-19a}
&
\limsup_{N\to+\infty}\left|I_N^2-I_N^2(\delta)\right|\nonumber\\
&
\le
\limsup_{N\to+\infty}\int_{\bbR\times\bbT}\bbE\left[\sup_{k'}\left|W_0^2\left(Y^o_N( t;y,k), k'\right) -W_0^2\left(Y^o_N( t-\delta;y,k), k'\right)\right|\right]|G(y,k)|dydk\nonumber
\\
&
+\|\widetilde W_0\|_\infty\limsup_{N\to+\infty}\int_{\bbR\times\bbT}\bbP\left[t-\delta\le \tilde  {\frak s}^N_{y,{\frak f}_N}<t\right] |G(y,k)|dydk<\frac{\eps}{10}
\end{align}
if $\delta>0$ is sufficiently small. We have used here the fact that,
for each $m\ge1$  the
law of~$(\hat  {\frak u}_{y,1},\ldots, \hat  {\frak u}_{y,m})$
-- the limit of the laws of  $(\tilde  {\frak s}^N_{y,1},\ldots,
\tilde  {\frak s}^N_{y,m})$, as $N\to+\infty$, -- is
absolutely continuous with respect to the Lebesgue measure. 
This is a
consequence of the strong Markov property of
$(\eta(t,y))_{t\ge0}$ and the fact that the joint law of
$(\hat  {\frak u}_{y,1},\eta(\hat  {\frak u}_{y,1},y))$ is
absolutely continuous with respect to the Lebesgue measure, see e.g.
Theorem 1, p. 93 of~\cite{ikeda-watanabe}.

To conclude
\eqref{081102-19}, it suffices to prove that we can choose  a sufficiently small
$\delta>0$ so that
\begin{align}
\label{081102-19b}
&
\limsup_{N\to+\infty}\left|I_N^2(\delta)-\int_{\bbR\times\bbT}\bbE\left[\bar W_0^2\left(\eta^o(t;y)\right),\, t<\hat  {\frak u}_{y,{\frak
  f}}\right]G(y,k)dydk\right|<\frac{\eps}{10}.
\end{align}
To this end, we will assume, without loss of any generality, that $W_0^2\in
C^\infty_c(\bbR\times \bbT)$. Indeed, for any~$W_0^2\in
C_b(\bbR\times \bbT)$ satisfying~\eqref{012604-19} and $R>0$, we can find  $W_0^{2,s}\in
C^\infty_c(\bbR\times \bbT)$ and    such that
$$
\hbox{$\| W_0^{2,s}\|_\infty\le \|
W_0^{2}\|_\infty+1$ and } 
\sup_{|y|\le R,k\in \bbT}|W_0^{2,s}(y,k)-W_0^{2}(y,k)|<\frac{\eps}{100}.
$$
Thanks to the already established  tightness of the laws of
$Y_N^o(t,y,k)$ we can easily see that, upon the choice of a
sufficiently large $R>0$,
$$
\limsup_{N\to+\infty}\left|I_N^2(\delta)-I_N^{2,s}(\delta)\right|<\frac{\eps}{10},
$$
where $I_N^{2,s}(\delta)$ is defined by  
\eqref{I2}, with   $W_0^{2,s}$ replacing $W_0^{2}$.  From here
on, we will restrict our attention to $I_N^{2,s}(\delta)$.

Using Lemmas \ref{lm020401-19}
and \ref{lm010401-19}, together with the conclusion of Theorem
\ref{thm010902-19} one can show that for a sufficiently small $\delta>0$ we have
\begin{align}
\label{081102-19c}
\limsup_{N\to+\infty}\left|I_N^2(\delta)-\tilde I_N^2(\delta)\right|<\frac{\eps}{10},
\end{align}
where 
$$
\tilde I_N^2(\delta):=
\int_{\bbR\times\bbT}\bbE\left[W_0^2\left(Y^o_N( t-\delta;y,k),\hat K^o_N(
    t,k)\right),\, t-\delta\le \tilde  {\frak s}^N_{y,{\frak
      f}_N}\right] G(y,k)dydk ,
$$
and
\begin{equation}
\label{050507xh}
 \hat K_N^o(t,y):=\hat\si_N^o(t-\delta) K_N(t,k),
\end{equation} where
    $\hat \si_N^o(t):=\hat \si_N(T_N^{-1}(t))$,
$\hat \si_N(t)\equiv1$ for 
 $t\in [0, \tilde {\frak s}_{y,1}^N)$, and 
\begin{equation}
\label{050507y}
\hat \si_N(t):=
\prod_{j=1}^{m}\hat\si_{j},\quad t\in [\tilde {\frak
  s}_{y,m}^N,\tilde  {\frak s}_{y,m+1}^N),\, m\ge1.
\end{equation}
 Conditioning on ${\cal K}_{t-\delta}$, where
$\left({\cal K}_t^N\right)_{t\ge0}$ is the natural filtration of
$(K_N(t,k))_{t\ge0}$, we write
\begin{align*}
\tilde I_N^2(\delta)=
\hat I_N^2(\delta)+\bar I_N^2(\delta),
\end{align*}
where
\begin{align*}
&\hat I_N^2(\delta):=
\int_{\bbR\times\bbT}\bbE\left[\bar W^2_0(Y^o_N( t-\delta;y,k)),\, t-\delta\le \tilde  {\frak s}^N_{y,{\frak
      f}_N}\right] G(y,k)dydk ,\\
&
\bar I_N^2(\delta):=\int_{\bbR\times\bbT}\bbE\left[w_N(Y^o_N( t-\delta;y,k), \hat\si_N^o(t-\delta) K_N(t-\delta,k)),\, t-\delta\le \tilde  {\frak s}^N_{y,{\frak
      f}_N}\right] G(y,k)dydk
\end{align*}
and
$$
\bar W^2_0(y):=\int_\bbT W^2_0(y,k)\mu(dk).
$$
We have used above the notation
$$
w_N(y,k):=e^{N\delta
  L}W_0^2\left(y,\cdot\right)(k)=\frac{1}{2\pi}\sum_{\ell\in\bbZ\setminus\{0\}}e^{N\delta
  L}{\frak e}_\ell(k)\int_{\bbR}\widehat
W_0^2\left(\xi,\ell\right)e^{i\xi y}d\xi,
$$
with the generator $L$ given by \eqref{L},
${\frak e}_\ell(k):=\exp\big\{2\pi i k\ell\big\}$ and 
$$
\widehat
W_0^2\left(\xi,\ell\right)=\int_{\bbR\times \bbT}
W_0^2\left(y,k\right)e^{-i\xi y}{\frak e}_\ell^\star(k)dy dk.
$$
The term $\bar I_N^2(\delta)$ we can estimated as follows:
\begin{align}
\label{081102-19d}
&\left|\bar I_N^2(\delta)\right|\le
  \int_{\bbR}\sup_{k\in\bbT}|G(y,k)|dy\\
&
\times \sum_{\ell\in\bbZ\setminus\{0\}}\int_{\bbR\times \bbT}|\widehat
W_0^2\left(\xi,\ell\right)|\bbE\Big[|e^{N\delta
  L}{\frak e}_\ell(K_N(t-\delta,k))|+|e^{N\delta
  L}{\frak e}_\ell(-K_N(t-\delta,k))|\Big]d\xi dk\nonumber\\
&
= 2\int_{\bbR}\sup_{k\in\bbT}|G(y,k)|dy \sum_{\ell\in\bbZ\setminus\{0\}}\|e^{N\delta
  L}{\frak e}_\ell\|_{L^1(\bbT)}\int_{\bbR}|\widehat
W_0^2\left(\xi,\ell\right)|d\xi \nonumber.
\end{align}
Now, \eqref{etL} implies that $\|e^{N\delta
  L}{\frak e}_\ell\|_{L^1(\bbT)}\to0$, as $N\to+\infty$, for each $\ell\not=0$. Therefore,
$$
\lim_{N\to+\infty}\bar I_N^2(\delta)=0.
$$
It follows from Theorem \ref{thm010902-19}   that
\begin{align}
\label{081102-19b-bis}
&
\limsup_{N\to+\infty}\left|\hat I_N^2(\delta)-\int_{\bbR\times\bbT}\bbE\left[\bar W_0^2\left(\eta^o(t;y)\right),\, t<\hat  {\frak u}_{y,{\frak
  f}}\right]G(y,k)dydk\right|<\frac{\eps}{2},
\end{align}
provided that $\delta>0$ is sufficiently small. 
This ends the proof of \eqref{010701-19NN}.

\section{Proof of Theorem \ref{main-thm}: description of the limit}
\label{sec8}

So far, we have shown the weak convergence of $W_N(t,y,k)$ to $ W(t,y)$,
in the sense of~(\ref{010701-19NN}), with $W(t,y)$ defined in
\eqref{012603-19} and  (\ref{010701-19NN1}). 
We now identify $ W(t,y)$ as a weak solution to (\ref{013101-19x}) if 
$W_0\in {\cal C}_T$.
Thanks to \eqref{022205-19} and \eqref{012603-19} it suffices only to
consider the case $T=0$. Consider a regularized scattering kernel: take $a\in(0,1)$ and set
$$
q^{(a)}_\beta(y)=\frac{c_\beta1_{(a,+\infty)}(|y|)}{|y|^{2+1/\beta}},\quad y\in\bbR_*.
$$
Let $\left(\eta_a(t,y)\right)_{t\ge0}$ be a Levy
process starting at $y\in\bbR$, with the generator $-\hat c\Lambda_{\beta}^{(a)}$, where 
\begin{equation}
\label{La}
\Lambda_{\beta}^{(a)}F(y):=\int_{\bbR}q^{(a)}_\beta(y-y')[F(y)-F(y')]dy',\quad
F\in B_b(\bbR).
\end{equation}
It is well known, see e.g. Section 2.5 of \cite{kyprianou}, that
$\left(\eta_a(t,y)\right)_{t\ge0}$
converge in law, as $a\to 0^+$, over~$D[0,+\infty)$, with the topology of the  uniform
convergence on compacts, to $\left(\eta(t,y)\right)_{t\ge0}$,
the symmetric stable process with the generator $-\hat c\Lambda_{\beta}$,
as in  \eqref{frac-lap}.

We define inductively the times of jumps over the interface
$$
\hat {\frak u}^a_{y,1}:=\inf[t>0:\, \eta_a(t-,y)\eta_a(t,y)<0],\quad \hat {\frak u}^a_{y,m+1}:=\inf[t> \hat {\frak u}^a_{y,m}:\, \eta_a(t-,y)\eta_a(t,y)<0].
$$
To set up the reflected-transmitted-killed process,
let $(\hat\si_m)_{m\ge1}$ be a  sequence of i.i.d. $\{-1,0,1\}$ random
variables, independent of $\left(\eta_a(t,y)\right)_{t\ge0}$
distributed according to \eqref{hatsi}, and set 
\begin{equation}
\label{eta0}
\eta_a^o(t,y):=\Big(\prod_{j=1}^m\hat\si_j\Big)\eta_a(t,y),\quad
t\in[\hat {\frak u}^a_{y,m}, \hat {\frak u}^a_{y,m+1}),\,m\ge0,
\end{equation}
where $\hat {\frak u}^a_{y,0}:=0$.
Using Theorem \ref{thmA} together with the argument in Section
\ref{sec4.3} we easily conclude   the following.
\begin{thm}
\label{thm011402-19}
The random elements $((\eta_a^o(t,y))_{t\ge0},(\hat {\frak u}^a_{y,m})_{m\ge1})$ 
converge in law over the product space
$D[0,+\infty)\times\bar\bbR_+^\bbN$, equipped with the product of the
topology of uniform convergence on compacts and the standard infinite
product topology, to
$((\eta^o(t,y))_{t\ge0},(\hat {\frak
      u}_{y,m})_{m\ge1})$, as $a\to0$.
\end{thm}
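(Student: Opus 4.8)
The plan is to reproduce, essentially verbatim, the architecture of the proof of Theorem \ref{thm010401-19} in Section \ref{sec4.3}, with the discrete scaling parameter $N$ replaced by the continuous truncation parameter $a\downarrow0$ and the joint convergence input Theorem \ref{thm013112} replaced by Theorem \ref{thmA}. The point is that $\eta_a^o$, defined in \eqref{eta0}, is a fixed measurable functional of the triple consisting of the base process $\eta_a$, the interface-crossing times $(\hat{\frak u}^a_{y,m})_{m\ge1}$, and the sign sequence $(\hat\si_m)_{m\ge1}$: on $[\hat{\frak u}^a_{y,m},\hat{\frak u}^a_{y,m+1})$ it equals $\bigl(\prod_{j\le m}\hat\si_j\bigr)\eta_a$, and $\eta^o$ is the same functional of $(\eta,(\hat{\frak u}_{y,m}),(\hat\si_m))$. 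Theorem \ref{thmA}, the continuous-parameter analogue of Theorem \ref{thm013112}, supplies the joint convergence in law of $\bigl(\eta_a,(\hat{\frak u}^a_{y,m})_m,(\eta_a(\hat{\frak u}^a_{y,m},y))_m\bigr)$ to $\bigl(\eta,(\hat{\frak u}_{y,m})_m,(\eta(\hat{\frak u}_{y,m},y))_m\bigr)$. First I would realize all this on the probability space of the L\'evy--It\^o construction, on which the given convergence $\eta_a\to\eta$ holds almost surely and uniformly on compacts, $\eta_a$ sharing with $\eta$ every jump of size exceeding $a$, and on which Theorem \ref{thmA} gives $\hat{\frak u}^a_{y,m}\to\hat{\frak u}_{y,m}$ for each $m$ almost surely.

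Two ingredients of Section \ref{sec4.3} trivialize here. First, the analogue of Lemma \ref{lm020401-19} is immediate: the killing index ${\frak f}=\min[m\ge1:\hat\si_m=0]$ is literally the \emph{same} geometric random variable of parameter ${\frak g}_0>0$ for both $\eta_a^o$ and $\eta^o$, hence almost surely finite uniformly in $a$. Second, and decisively, the coupling Lemma \ref{lm010401-19} is not needed at all: the sequence $(\hat\si_m)$ is one and the same i.i.d.\ sequence distributed by \eqref{hatsi} and independent of the base process in both objects, so the running products $\prod_{j\le m}\hat\si_j$ coincide \emph{exactly} rather than merely with high probability. This is precisely why the argument is "easy" relative to the $N$-dependent case, in which $\hat\si_{y,m}^N$ depended on the momentum $K^k_{n^N_{y,m}}$ and had to be matched to $\hat\si_m$ through \eqref{non-deg1}.

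It then remains to upgrade the almost-sure convergence of the base data to almost-sure convergence of $\eta_a^o$ to $\eta^o$, uniformly on compacts, up to the common killing index ${\frak f}$, after which both processes vanish. Fix $T>0$; on $[0,T]$ only finitely many crossings occur before $\hat{\frak u}_{y,{\frak f}}$, and by the analogue of Lemma \ref{lm010201-19} (Millar's result) these are strict crossings by jumps, with $\eta(\hat{\frak u}_{y,m}-,y)\,\eta(\hat{\frak u}_{y,m},y)<0$ and the two one-sided values bounded away from the interface. Since $\eta_a$ and $\eta$ share every jump exceeding $a$, for $a$ below the finitely many crossing gaps the crossing jumps are common to both paths; together with the uniform smallness of $\eta-\eta_a$ this forces $\hat{\frak u}^a_{y,m}=\hat{\frak u}_{y,m}$ for all $m\le{\frak f}$ once $a$ is small. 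On each closed subinterval strictly between consecutive crossing times one then has $\eta_a^o=\bigl(\prod_{j\le m}\hat\si_j\bigr)\eta_a$ and $\eta^o=\bigl(\prod_{j\le m}\hat\si_j\bigr)\eta$ with the \emph{identical} sign, so $\sup|\eta_a^o-\eta^o|\le\sup|\eta_a-\eta|\to0$, while across the shared crossing jumps the two paths align in the uniform metric. Convergence in law of $\bigl((\eta_a^o(t,y))_{t\ge0},(\hat{\frak u}^a_{y,m})_{m\ge1}\bigr)$ over $D[0,+\infty)\times\bar\bbR_+^{\bbN}$ follows.

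The main obstacle is exactly this last step: multiplication of a path by a piecewise-constant sign product is \emph{not} a continuous operation on $D[0,+\infty)$ in the uniform topology, since any mismatch of the switching times, however small, produces on the intervening interval a discrepancy of order $\sup|\eta|$. The device that removes it is the strictness of the crossings of the limiting stable process: Millar's theorem (the analogue of Lemma \ref{lm010201-19}), together with the absolute continuity of the crossing positions furnished by Theorem \ref{thmA}, guarantees that almost surely $\eta$ neither touches the interface tangentially nor accumulates crossings, so the small-jump perturbation $\eta-\eta_a$ cannot create spurious or displaced crossings for small $a$. Hence the assembling functional is almost surely continuous at the limiting path, and the claimed convergence holds.
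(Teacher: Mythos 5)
Your proposal is correct and follows essentially the same route as the paper, whose proof of Theorem \ref{thm011402-19} consists precisely of the instruction to combine Theorem \ref{thmA} with the argument of Section \ref{sec4.3}; you correctly identify that Lemmas \ref{lm020401-19} and \ref{lm010401-19} trivialize because the sign sequence $(\hat\si_m)$ is literally the same for $\eta_a^o$ and $\eta^o$, and that the only real issue is the continuity of the assembly map at the limiting path, which you resolve via Millar's result exactly as the paper does in Lemma \ref{lm010201-19} and Appendix \ref{appA}. Your explicit L\'evy--It\^o coupling (jump-sharing above level $a$, forcing $\hat{\frak u}^a_{y,m}=\hat{\frak u}_{y,m}$ for small $a$) is a concrete and somewhat cleaner substitute for the paper's Skorokhod-embedding step, and it even yields the slightly stronger a.s. statement from which the claimed convergence in law follows.
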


As a direct corollary of the above theorem we conclude that
\begin{equation}
\label{011402-19}
\lim_{a\to0+}\widetilde W^{(a)}(t,y)
=\widetilde W(t,y),\quad (t,y)\in \bar\bbR_+\times\bbR_*,
\end{equation}
with $\widetilde W(t,y)$, the limit of $W_N(t,y,k)$, given by  \eqref{010701-19NN1}, and
\begin{equation}
\label{010701-19NN1a-bis}
\widetilde W^{(a)}(t,y)=\bbE\left[\bar W_0\left(\eta^o_a( t;y)\right),\, t<\hat  {\frak u}^a_{y,{\frak
  f}}\right],
\end{equation}
where $\bar W_0$ is given by \eqref{barW}, and ${\frak f}$  by \eqref{ff}. 

Note that $\widetilde W^{(a)}(t,y)$ satisfies
\begin{align}
\label{013101-19a}
\partial_t\widetilde W^{(a)}(t,y)=\hat c \hat L_a\widetilde
  W^{(a)}(t,y),\quad (t,y)\in\bbR_+\times\bbR_*,
\end{align}
in the classical sense, where
\begin{align*}
\hat L_a F(y):=&-\Lambda_\beta^{(a)} F(y)+p_- \int_{[yy'<0]}q_\beta(y-y')[F(-y')-F(y')]dy'
\\
&
-{\frak g}_0\int_{[yy'<0]}q_\beta(y-y')F(y')dy',\quad F\in B_b(\bbR).
\end{align*} 
Indeed, let
$$
Q^{(a)}:=\hat c\int_{\bbR}q_\beta^{(a)}(y)dy,
$$
and for $\Delta t\ll1$ and $t>0$  write  
\begin{align*}
\widetilde W^{(a)}(t+\Delta t,y)&=\bbE [\bar W_0(\eta_a^o(t+\Delta t,y)) ,\, t+\Delta t<\hat  {\frak u}^a_{y,{\frak
  f}}]
=\bbE \left[\bar W_0(\eta_a^o(t, \eta_a^o(\Delta t,y)),  t<\hat  {\frak u}^a_{ \eta_a^o(\Delta t,y),{\frak
  f}}\right]\\
&
=e^{-Q^{(a)}\Delta t}\widetilde W^{(a)}(t,y)+\hat c%\left\{
\int_{[yy'>0]}q_\beta(y-y')\widetilde W^{(a)}(t,y')dy'%\right\}
\Delta t\\
&
+\mathop{\underbrace{(1-p_--{\frak g}_0)}}_{=p_+}\hat
  c%\left\{
  \int_{[yy'<0]}q_\beta(y-y')\widetilde W^{(a)}(t,y')dy'%\right\}
  \Delta
  t\\
&
+p_-\hat c%\left\{
\int_{[yy'>0]}q_\beta(y+y')\widetilde W^{(a)}(t,y')dy'%\right\}
\Delta t+o(\Delta t).
\end{align*}
It follows that
$$
\widetilde W^{(a)}(t+\Delta t,y)-\widetilde W^{(a)}(t,y)=\hat c\hat L_a \widetilde W^{(a)}(t,y)\Delta t+o(\Delta t),
$$
which implies \eqref{013101-19a}.
Thanks to \eqref{011402-19}, we 
conclude that $\widetilde W$ satisfies % part (ii) of
Definition
\ref{df011803-19}.

\section{Proof of Theorem \ref{thm012205-19}}
\label{sec:proof-theor-refthm01}

{By considering the kinetic equation with the initial data $W_0':=W_0-T'$
we may assume that $T'=0$ and
$W_0\in L^1(\bbR\times\bbT)$.}

Assume first that $T=0$. Let %the Hilbert space norm
$\|\cdot\|_{{\cal H}_a}$ be defined by the analog to \eqref{Hnorm},
with the kernel $q^{(a)}_\beta(\cdot)$ replacing  $q_\beta(\cdot)$, and
the Hilbert space ${\cal H}_a$ be the completion of
$C_0^\infty(\bbR_*)$ in the respective norm.
Obviously, we have
\begin{equation}
\label{012003-19}
\|G\|_{{\cal H}_a}\le \|G\|_{{\cal H}_{a'}},\quad G\in
C_0^\infty(\bbR_*),\,a>a'\ge 0.
\end{equation}
As with (\ref{apr210}), we have
%The following result follows upon a direct calculation, multiplying
%both sides of \eqref{013101-19a} by $\widetilde W^{(a)}(t,y)$ and
%integrating in the $y$ variable, cf \eqref{apr210}.
%\begin{prop}
%\label{prop011903-19}
%We have
\begin{align}
\label{011903-19}
&\frac{d}{dt}\|\widetilde W^{(a)}(t)\|^2_{L^2(\bbR)}=-\hat
  c \|\widetilde W^{(a)}(t)\|_{{\cal H}_a}^2,
\end{align}
so that
\begin{equation}
\label{022003-19}
\|\widetilde W^{(a)}(t)\|^2_{L^2(\bbR)}+\hat
  c \int_0^t\|\widetilde W^{(a)}(s)\|_{{\cal H}_a}^2ds=\|\bar
  W_0\|^2_{L^2(\bbR)},\quad t\ge0,\,a>0.
\end{equation}
Letting $a\to0+$ we conclude, from
  \eqref{022003-19} and  \eqref{011402-19} that
%upon an  application of the Fatou lemma, that
\begin{equation}
\label{022003-19a}
\|\widetilde W(t)\|^2_{L^2(\bbR)}+\hat
  c \int_0^t\|\widetilde W(s)\|_{{\cal H}_0}^2ds\le \|\bar
  W_0\|^2_{L^2(\bbR)},\quad t\ge0,
\end{equation}
which implies part (i) of Definition \ref{df011803-19}.

%\subsection{The case of an arbitrary temperature $T\ge0$}
When $T\neq 0$, let us set
\begin{equation}
\label{Wtya}
W^{(a)}(t,y):=\bbE\left[\bar W_0\left(\eta^o_a( t;y)\right),\, t<\hat  {\frak u}^a_{y,{\frak
  f}}\right]+T\bbP[ t\ge \hat  {\frak u}^a_{y,{\frak
  f}}],\quad (t,y)\in\bbR_+\times \bbR_*,
\end{equation}
where 
$ \bar W_0$ is given by \eqref{barW}.
It follows from \eqref{013101-19a}   that $W^{(a)}$ satisfies
\begin{align}
\label{013101-19aa}
&
\partial_t W^{(a)}(t,y)=\hat c L_aW^{(a)}(t,y)+\hat c p_-\int_{[yy'<0]}q^{(a)}_\beta(y-y')[ W^{(a)}(t,-y')- W^{(a)}(t,y')]dy'\nonumber\\
&
+\hat c\frak g_0\int_{[yy'<0]}q^{(a)}_\beta(y-y')[T- W^{(a)}(t,y')]dy',
\end{align}
while \eqref{012603-19} and \eqref{011402-19} imply
\begin{equation}
\label{011402-19a}
\lim_{a\to0+} W^{(a)}(t,y)
= W(t,y),\quad (t,y)\in \bar\bbR_+\times\bbR_*
\end{equation}
and
\begin{equation}
\label{Wty}
W(t,y)=\bbE\left[ \bar W_0\left(\eta^o( t;y)\right),\, t<\hat  {\frak u}_{y,{\frak
  f}}\right]+T\bbP[ t\ge \hat  {\frak u}_{y,{\frak
  f}}],\quad (t,y)\in\bbR_+\times \bbR_*.
\end{equation}
%where $\bar W_0$ is given by \eqref{barW}.
Part (i) of Definition \ref{df011803-19} is a direct
conclusion from the following.
\begin{prop}
\label{prop011903-19}
If $\bar W_0\in C_b(\bbR_*)\cap L^1(\bbR)$, then  
$W\in L^\infty_{\rm loc}([0,+\infty),L^2(\bbR))$ and~$W-T\in L^2_{\rm
  loc}([0,+\infty);{\cal H}_0)$.
\end{prop}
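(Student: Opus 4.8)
The plan is to prove both claims by establishing, for the regularized solutions $W^{(a)}$ of \eqref{013101-19aa}, an energy identity that is uniform in $a$, and then to pass to the limit $a\downarrow 0$ using the pointwise convergence \eqref{011402-19a}, exactly in the spirit of the passage from \eqref{022003-19} to \eqref{022003-19a} in the case $T=0$. The difficulty, and the reason the $T=0$ argument does not transfer verbatim, is that the natural object for the ${\cal H}_a$--norm is $W^{(a)}-T$, which vanishes at the interface (so that the ${\frak g}_0$--boundary term in \eqref{Hnorm} is finite), but which is \emph{not} in $L^2(\bbR)$: since $W^{(a)}(t,y)\to 0$ as $|y|\to\infty$ (the process escapes the interface and $\bar W_0\in L^1$), the difference $W^{(a)}-T$ tends to $-T$ at infinity. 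Hence the plain energy identity \eqref{011903-19} cannot be applied to $W^{(a)}-T$, and I would instead work with a renormalized energy.

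First I would observe that $V^{(a)}:=W^{(a)}-T$ solves the homogeneous equation $\partial_t V^{(a)}=\hat c\,\hat L_a V^{(a)}$, because the constant $T$ is annihilated by $\Lambda_\beta^{(a)}$ and cancels in the $p_-$ and ${\frak g}_0$ terms of \eqref{013101-19aa}. Testing the equation for $W^{(a)}$ against $W^{(a)}=V^{(a)}+T$, using the Dirichlet--form identity $\int_{\bbR}V\,\hat L_a V\,dy=-\tfrac12\|V\|_{{\cal H}_a}^2$ that underlies \eqref{011903-19}, and using $T\int_{\bbR}\hat L_a V^{(a)}\,dy=\tfrac{T}{\hat c}\tfrac{d}{dt}\int_{\bbR}W^{(a)}\,dy$ (obtained by integrating \eqref{013101-19aa} over $y\in\bbR$, the $\Lambda_\beta^{(a)}$ contribution vanishing by antisymmetry), I would arrive at
\begin{equation}
\frac{d}{dt}\Big[\|W^{(a)}(t)\|_{L^2(\bbR)}^2-2T\int_{\bbR}W^{(a)}(t,y)\,dy\Big]=-\hat c\,\|W^{(a)}(t)-T\|_{{\cal H}_a}^2 .
\end{equation}
Setting $\Phi^{(a)}(t):=\int_{\bbR}\big[(W^{(a)}(t,y)-T)^2-T^2\big]\,dy=\|W^{(a)}(t)\|_{L^2(\bbR)}^2-2T\int_{\bbR}W^{(a)}(t,y)\,dy$ for the (finite, since $W^{(a)}\to 0$ at infinity) renormalized energy and integrating in time gives
\begin{equation}
\Phi^{(a)}(t)+\hat c\int_0^t\|W^{(a)}(s)-T\|_{{\cal H}_a}^2\,ds=\Phi^{(a)}(0)=\|\bar W_0\|_{L^2(\bbR)}^2-2T\int_{\bbR}\bar W_0(y)\,dy ,
\end{equation}
where the right-hand side is finite and independent of $a$ because $\bar W_0\in C_b(\bbR_*)\cap L^1(\bbR)\subset L^1(\bbR)\cap L^2(\bbR)$. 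For $a>0$ the kernel $q_\beta^{(a)}$ is bounded and integrable, so these manipulations (differentiation under the integral, the Dirichlet-form identity) are justified.

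To close the estimate I need $\Phi^{(a)}(t)$ bounded from below uniformly in $a$ and $t\in[0,t_0]$, for which it suffices to bound $\|W^{(a)}(t)\|_{L^2(\bbR)}$ and $\|W^{(a)}(t)\|_{L^1(\bbR)}$ uniformly, since $|\Phi^{(a)}(t)|\le \|W^{(a)}(t)\|_{L^2(\bbR)}^2+2|T|\,\|W^{(a)}(t)\|_{L^1(\bbR)}$. Here I would use the decomposition $W^{(a)}(t,y)=\widetilde W^{(a)}(t,y)+T\,\bbP[t\ge \hat{\frak u}^a_{y,{\frak f}}]$ from \eqref{Wtya}: the first term is the $T=0$ killed evolution of $\bar W_0$, controlled in $L^2$ by \eqref{022003-19} and in $L^1$ by the $L^1$-contractivity of the sub-Markovian killed semigroup, while the second term is $|T|$ times the absorption probability, which for the symmetric stable process of index $1+1/\beta>1$ decays like $|y|^{-(1+1/\beta)}$ as $|y|\to\infty$ and is therefore in $L^1(\bbR)\cap L^2(\bbR)$ with norms bounded on $[0,t_0]$ uniformly in $a$. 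This yields $|\Phi^{(a)}(t)|\le C(t_0)$ and hence $\int_0^{t_0}\|W^{(a)}(s)-T\|_{{\cal H}_a}^2\,ds\le C'(t_0)$, uniformly in $a$.

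Finally I would pass to the limit. Fatou's lemma applied to \eqref{011402-19a} gives $\|W(t)\|_{L^2(\bbR)}\le\liminf_{a\downarrow0}\|W^{(a)}(t)\|_{L^2(\bbR)}\le C(t_0)$, so $W\in L^\infty_{\rm loc}([0,+\infty),L^2(\bbR))$. For the second claim I would fix $a_0>0$, use $\|\cdot\|_{{\cal H}_{a_0}}\le\|\cdot\|_{{\cal H}_a}$ for $a<a_0$ from \eqref{012003-19}, apply Fatou as $a\downarrow0$ to get $\int_0^{t_0}\|W(s)-T\|_{{\cal H}_{a_0}}^2\,ds\le C'(t_0)$, and then let $a_0\downarrow0$ using the monotone convergence $\|\cdot\|_{{\cal H}_{a_0}}\nearrow\|\cdot\|_{{\cal H}_0}$; this gives $\int_0^{t_0}\|W(s)-T\|_{{\cal H}_0}^2\,ds\le C'(t_0)$, i.e. $W-T\in L^2_{\rm loc}([0,+\infty),{\cal H}_0)$. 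The main obstacle is precisely the non-integrability of $W-T$ at infinity: the whole scheme hinges on replacing the $L^2$-energy by the renormalized energy $\Phi^{(a)}$, and on the uniform $L^1\cap L^2$ control of $W^{(a)}$, which itself rests on the $|y|^{-(1+1/\beta)}$ decay of the absorption probability for the stable process.
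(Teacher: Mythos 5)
Your proposal is correct and follows essentially the same route as the paper: the renormalized energy identity you write for $\Phi^{(a)}$ is exactly the paper's \eqref{022003-19zz}--\eqref{022003-19zz1}, the uniform $L^1$ bound on $W^{(a)}$ comes from the same probabilistic representation \eqref{Wtya}, and the limit $a\to0^+$ is taken via the same Fatou/monotonicity argument based on \eqref{012003-19}. The only cosmetic difference is that you control $\int_0^\infty\bbP[t\ge\hat{\frak u}^a_{y,1}]\,dy$ through the stable tail decay of the absorption probability, whereas the paper bounds it by $\bbE[\sup_{s\le t}\eta_a(s;0)]$ and uses Doob's maximal inequality together with a uniform-in-$a$ moment bound to make the estimate rigorous for the truncated processes $\eta_a$.
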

{\bf Proof.}
Let $\widetilde W^{(a)}(t):= W^{(a)}(t)-T$.
Multiplying
both sides of \eqref{013101-19aa} by $\widetilde W^{(a)}(t,y)$ and
integrating in the $y$ variable we obtain
\begin{align}
\label{022003-19zz}
&\|
  \bar W_0\|^2_{L^2(\bbR)}=\|
  W^{(a)}(t)\|^2_{L^2(\bbR)}+2T\left(\int_{\bbR}W^{(a)}(t,y)dy-\int_{\bbR}\bar
  W_0(y)dy\right)+\hat
  c \int_0^t\|\widetilde W^{(a)}(s)\|_{{\cal H}_a}^2ds,
\end{align}
hence
%Letting $a\to0+$ we conclude that
\begin{align}
\label{022003-19zz1}
&\| W^{(a)}(t)\|^2_{L^2(\bbR)}+\hat
  c \int_0^t\|\widetilde W^{(a)}(s)\|_{{\cal H}_a}^2ds\le \|
  \bar W_0\|^2_{L^2(\bbR)}+2T\left(\|W^{(a)}(t)\|_{L^1(\bbR)}+\|\bar W_0\|_{L^1(\bbR)}\right).
\end{align}
To estimate the $L^1$-norm in the right side, note that \eqref{Wtya} implies
\begin{align}
\label{Wty1}
|W^{(a)}(t,y)|\le \bbE\left[ |\bar W_0\left(\eta_a( t;y)\right)|+ |\bar W_0\left(-\eta_a( t;y)\right)|\right]+T\bbP[
  t\ge \hat  {\frak u}^a_{y,1}],
\end{align}
where $\eta_a(\cdot,y)$ is the Levy process with the generator
\eqref{La} starting at $y$, thus
%From here we conclude that
\begin{equation}
\label{Wty2}
\|W^{(a)}(t)\|_{L^1(\bbR)}\le 2\|\bar W_0\|_{L^1(\bbR)}+2T\int_{0}^{+\infty}\bbP[
  t\ge \hat  {\frak u}^a_{y,1}]dy\le 2\|\bar W_0\|_{L^1(\bbR)}+2T \bbE[ \sup_{s\in[0,t]}\eta_a(s ;0)].
\end{equation}
%Note that
%% $$
%% \bbP[
%%   t\ge \hat  {\frak u}^a_{y,1}]=\bbP[\sup_{s\in[0,t]}\eta_a(s;0)\ge
%%   y],
%% $$
%% therefore 
%$$
%\int_{0}^{+\infty}\bbP[
%  t\ge \hat  {\frak u}^a_{y,1}]dy=\bbE[ \sup_{s\in[0,t]}\eta_a(s ;0)].
%$$
%Therefore,
%\begin{align}
%\label{Wty3}
%\|W^{(a)}(t)\|_{L^1(\bbR)}\le 2\|\bar W_0\|_{L^1(\bbR)}+2T \bbE[ \sup_{s\in[0,t]}\eta_a(s ;0)].
%\end{align}
Since $\left(\eta_a(t,0)\right)_{t\ge0}$ is a martingale, we may use the  Doob maximal
inequality ($\left(\eta_a(t,0)\right)_{t\ge0}$ to see that there exists $C>0$ such that
\begin{align}
\label{sup-stable}
 \Big\{\bbE\Big[(\sup_{s\in[0,t]}\eta_a(s;0))^{\kappa}\Big]\Big\}^{1/\kappa}
\le C \left\{\bbE\left[
    \left|\eta_a(t
      ;0)\right|^{\kappa}\right]\right\}^{1/\kappa},
\end{align}
with $1<\kappa<1+1/\beta$. 
The argument in the proof of Lemma 5.25.7, p. 161 of \cite{sato} implies
$$
\limsup_{a\to0+}\bbE\left[
    \left|\eta_a(t
      ;0)\right|^{\kappa}\right]<+\infty,
$$
so that, in particular, $\bbE[
  \sup_{s\in[0,t]}\eta(s ;0)]<+\infty$.
Letting $a\to0^+$ in \eqref{Wty2}, we obtain  
\begin{align}
\label{Wty2a}
&
\lim_{a\to0+}\|W^{(a)}(t)\|_{L^1(\bbR)}=\|W(t)\|_{L^1(\bbR)}\le 2\|\bar W_0\|_{L^1(\bbR)}+2T \bbE[
  \sup_{s\in[0,t]}\eta(s ;0)] \\
&\le 2\|\bar W_0\|_{L^1(\bbR)}+2Tt^{\beta/(1+\beta)} \bbE[
  \sup_{s\in[0,1]}\eta(s ;0)].\nonumber
\end{align}
The last inequality follows from the  self-similarity of the
stable process $\left(\eta(t;0)\right)_{t\ge0}$. Now, we use \eqref{Wty2a} to bound the right side of \eqref{022003-19zz1},
and pass to the limit $a\to 0^+$ of that inequality to  finish the
proof.
\qed

\label{sec8a}

\appendix

\section{Proof of the existence part of Proposition \ref{prop013001-19}}

\label{app} 

We may assume without loss of generality that $T=0$ in \eqref{feb1408} and \eqref{feb1410},
since if 
$W(t,y,k)$ is a solution of \eqref{eq:8} in this case, with the
respective interface conditions, then
$W(t,y,k)+T$ solves the corresponding problem with a given temperature
$T>0$. Consider a semigroup of bounded operators on
$L^\infty(\bbR\times \bbT_*)$ defined by
\begin{align}
\label{010304}
&S_tW_0\left(y,k\right)
=  
e^{-\ga_0R(k)t}W_0\left(y-\bar{\om}'(k)t,k\right) 1_{[0,\bar{\om}'(k)t]^c}(y) +p_+(k)e^{-\ga_0R(k)t}
\\
&
\times W_0\left(y-\bar{\om}'(k)t,k\right)1_{[0,\bar{\om}'(k)t]}(y)
+p_-(k) e^{-\ga_0R(k)t}W_0\left(-y+\bar\om'(k) t,-k\right)1_{[0,\bar \om'(k)t]}(y),\nonumber
\end{align}
with $W_0\in L^\infty(\bbR\times
\bbT_*)$, $t\ge0$  and $(y,k)\in \bbR_*\times \bbT_*$. 
%The family $(S_t)_{t\ge0}$ is a semigroup of bounded operators on
%$B_b(\bbR\times \bbT_*)$.  
Note that if $W_0$ is continuous on $\bbR_*\times \bbT_*$, then
$S_tW_0(y,k)$ satisfies the interface conditions~\eqref{feb1408} and \eqref{feb1410} (with $T=0$) for all $t>0$, %and  
%$S_tW_0\in{\cal C}_0$ for any~$t>0$, 
so that $S_t$ maps ${\cal
  C}_0$ to ${\cal C}_0$, for any $t\ge0$ fixed. 
  In addition, $(S_t)_{t\ge0}$
is a $C_0$-semigroup on ${\cal C}_0$, with the supremum norm, satisfying
\begin{equation}
\label{010204-19}
D_t[S_tW_0(y,k)]=-\ga_0 R(k) S_tW_0(y,k),
\end{equation}
together with the interface condition \eqref{feb1408aa} and the initial condition
$$
\lim_{t\to0} S_tW_0(y,k)=W_0(y,k),\quad (y,k)\in\bbR_*\times\bbT_*.
$$
Using this semigroup, we can rewrite equation \eqref{eq:8} in the mild
formulation
\begin{equation}
  \label{eq:8c}
 W(t,y,k) = S_tW_0(y,k)+\ga_0\int_0^tS_{t-s} {\cal R} W(s,y,k)ds, 
\quad (t,y,k)\in\bbR_+\times \bbR_*\times\bbT_*,
\end{equation}
with
\begin{equation}
\label{calR}
{\cal R}F(y,k):=\int_{\bbT}R(k,k')F(y,k')dk',\quad F\in L^\infty(\bbR\times\bbT).
\end{equation}
The solution of \eqref{eq:8c} with $W_0\in {\cal C}_0$ %$L^\infty(\bbR\times \bbT_*)$
can be written as the Duhamel series
\begin{equation}
  \label{eq:8cb}
 W(t,y,k) =\sum_{n=0}^{+\infty}S^{(n)}(t,y,k),
\quad (t,y,k)\in\bbR_+\times \bbR_*\times\bbT_*,
\end{equation}
where 
\begin{align*}
 &
S^{(0)}(t,y,k) :=S_tW_0(y,k),\\
&
S^{(n)}(t,y,k) :=\ga_0^n\int_{\Delta_n(t)}S_{t-s_{1}} {\cal
  R}S_{s_{1}-s_{2}}\ldots {\cal R}S_{s_{n-1}-s_n} {\cal
  R}S_{s_n} W_0(y,k)ds_{1,n},\quad n\ge1,
\end{align*}
and 
$$
\Delta_n(t):=[t\ge s_1\ge\ldots\ge s_n\ge0],\quad
ds_{1,n}:=ds_1\ldots ds_n.
$$
Since $W_0$ is bounded, the series is uniformly
convergent on any $[0,t]\times\bbR\times \bbT_*$.  Moreover, 
if~$W_0\in {\cal C}_0$, then  $S_sW_0\in {\cal C}_0$ for all $s\ge 0$ and 
the function ${\cal R} S_sW_0$ is bounded and continuous in $\bbR_*\times
\bbT_*$, though it need not satisfy \eqref{feb1408aa}. On the other hand,
the function
$S_{t-s}{\cal
  R} S_sW_0$ satisfies the interface condition \eqref{feb1408aa} for
all $s\in[0,t]$, thus so does
$$
S^{(1)}(t,y,k)=\int_0^t S_{t-s}{\cal
  R} S_sW_0(y,k)ds,
$$
%In addition, as a result of the  integration ({\bf why do you need this integration?})
%in the time domain, we know that
and $S^{(1)}(t,\cdot,\cdot)\in {\cal C}_0$ for each $t\ge0$. A similar
argument shows that  $S^{(n)}(t,\cdot,\cdot)\in {\cal C}_0$ for 
all~$t\ge0$ and $n\ge1$. Hence,  $W(t,\cdot)$ defined by  the series \eqref{eq:8c}
belongs to  ${\cal C}_0$ for each $t>0$. One can also verify easily
that both
\eqref{eq:8a} and \eqref{010102-19} hold. 
%since $W_0\in C^{1}_b(\bbR_*\times\bbT_*)$. 
Thus, $W(t,y,k)$ is a
solution of \eqref{eq:8} in the sense of Definition \ref{df013001-19},
which ends the proof of the existence part of Proposition \ref{prop013001-19}.\qed

\section{Proof of Theorem \ref{thm013112}}

\label{appA}

\subsection{Preliminaries on the Skorokhod space $D[0,+\infty)$}\label{sec:appB1}

Let us denote by $D[0,+\infty)$ the space of the cadlag functions,~
see \cite{billingsley}. 
The 
$J_1$-topology on $D[0,+\infty)$ is induced by the metric 
$$
\rho_\infty(X_1,X_2):=\int_0^{+\infty}(\rho_T(X_1,X_2)\wedge 1)e^{-T}
dT,\quad X_1,X_2\in D[0,+\infty),
$$ 
where
\begin{equation}
\label{rho}
\rho_T(X_1,X_2):=\inf_{\la\in\Lambda_T}\max\Big\{\sup_{t\in[0,T]}|\la(t)-t|,
  \sup_{t\in[0,T]}|X_1\circ\la(t)-X_2(t)|\Big\},\quad X_1,X_2\in D[0,T].
\end{equation}
Here $D[0,T] $ is the space of cadlag functions on $[0,T]$
and 
$\Lambda_T$ is the collection of homeomorphisms $\la:[0,T]\to[0,T]$ 
such that $\la(0)=0$, $\la(T)=T$.
Theorem 16.1 of \cite{billingsley} says that for a sequence
$(X_n)_{n\ge1}$ of cadlag functions: $X_n\to_{\rho_\infty}X$ iff there
exists a sequence of strictly increasing homeomorphisms $\la_n:[0,+\infty)\to [0,+\infty)$
such that for each $N>0$ we have
\begin{equation}
\label{hom}
 \lim_{n\to+\infty}\sup_{0\le t\le N}|t- \la_n(t)|=0\quad \mbox{and}\quad \lim_{n\to+\infty}\sup_{0\le t\le N}|X_n(t)-X\circ \la_n(t)|=0.
\end{equation}
The $M_1$-topology on $D[0,+\infty)$ is defined as follows.
For a given  $X\in D[0,T]$,
let $\Gamma_X$ be the graph of $X$:
\begin{equation}
\label{GX}
\Gamma_X:=[(t,z):t\in[0,T],\, z=c X(t-)+(1-c)X(t)],\mbox{ for some
}c\in[0,1]].
\end{equation} 
We define an order on $\Gamma_X$ by letting $(t_1,z_1)\le(t_2,z_2)$
iff
$t_1< t_2$, or $t_1=t_2$ and 
$$
|X(t_1-)-z_1|\le |X(t_1-)-z_2|.
$$ 
Denote
by $\Pi(X)$ the set of all continuous mappings $\gamma=(\gamma^{(1)},\gamma^{(2)}):[0,1]\to\Gamma_X$ that
are non-decreasing, i.e. $t_1\le t_2$ implies that $\gamma(t_1)\le
\gamma(t_2)$.
The metric ${\rm d}_T(\cdot,\cdot)$ is defined as follows:
$$
 \mbox{d}_T(X_1,X_2):=\inf[\|\gamma^{(1)}_1-\gamma^{(1)}_2\|_\infty\vee \|\gamma^{(2)}_1-\gamma^{(2)}_2\|_\infty,\,\gamma_i=(\gamma_i^{(1)},\gamma_i^{(2)})\in \Pi(X_i),\,i=1,2].
$$
This metric induces the $M_1$-topology in $D[0,T]$, see
\cite{whitt}, Theorem 13.2.1.  The  corresponding topology in
$D[0,+\infty)$ is defined by
\begin{equation}
\label{dinfty}
{\rm d}_\infty(X_1,X_2):=\int_0^{+\infty} 
({\rm d}_T(X_1,X_2)\wedge 1)e^{-T}
dT,\quad X_1,X_2\in D[0,+\infty).
\end{equation}
Then (see Theorem 6.3.2 of \cite{whitt-s}), we have
$$
{\rm d}_\infty(X_1,X_2)\le \rho_\infty(X_1,X_2) ,\quad X_1,X_2\in D[0,+\infty).
$$
%\red{This seems a misprint}
%One can show, see Theorem 6.3.2 of \cite{whitt-s}, that
%$$
%{\rm d}(X_1,X_2)\le \rho(X_1,X_2) ,\quad X_1,X_2\in D[0,T].
%$$
Let $\bar{T}_y, \underbar{T}_y:D[0,+\infty)\to[0,+\infty]$ be   
$$
\bar{T}_y(X):=\inf[t>0:\,X(t)>y],\quad
\underbar{T}_y(X):=\inf[t>0:\,X(t)<y],\quad X\in D[0,+\infty),
$$
%with the convention that the infimum of an empty set is $+\infty$, 
and for
$a\ge0$, let
~$\theta_a:D[0,+\infty)\to D[0,+\infty)$ be
\begin{equation}
\label{shift}
\theta_a(X)(t):=X(t+a),\quad t\ge0.
\end{equation}
Finally, we use the notation
${\cal M}, {\cal M}':D[0,+\infty)\to D[0,+\infty)$ for
\begin{equation}
\label{cM}
{\cal M}(X)(t):=\sup_{0\le s\le t}X(s),~~
%\end{equation} 
%and 
%\begin{equation}
%\label{cMp}
{\cal M}'(X):=-{\cal M}(-X)=\inf_{0\le s\le t}X(s). 
\end{equation}  
Both of these mappings are  
$J_1$-continuous, see Theorem 7.4.1 %p. 172 
of~\cite{whitt-s}.

\subsection*{Joint convergence of $((Z_{N}(t), T_N(t))_{t\ge0},{\frak t}_{y,1}^N)$}

To simplify the notation, we suppress writing $k$ and 
$y$ in the notation of the processes, denoting them by
$\left(Z_N(t)\right)_{t\ge0}$ and $\left(\zeta(t)\right)_{t\ge0}$, respectively. 
For $y>0$, we introduce the consecutive times the trajectory  
$Z_N(t)$ crosses the level $y$: 
$
{\frak t}_{y,1}^{N}:=\bar{T}_y(Z_{N}),
$
 and
$
{\frak t}^{N}_{y,2}:=\underbar{T}_y\circ \theta_{\bar{T}_y(Z_{N})}(Z_{N}).
$  
Having defined ${\frak t}_{y,2m-1}^{N}$, ${\frak t}_{y,2m}^{N}$ for some $m\ge1$,
we set
\begin{align}
\label{010412ba}
{\frak t}^{N}_{y,2m+1}:=\bar{T}_y\circ \theta_{{\frak t}^{N}_{y,2m}}(Z_{N}),\quad
 {\frak t}^{N}_{y,2m+2}:=\underbar{T}_y\circ \theta_{{\frak t}^{N,o}_{y,2m+1}}(Z_{N}).
\end{align}
We introduce the crossing times for $y<0$ similarly, as well as
the crossing times $\left({\frak t}_{y,m}\right)_{m\ge1}$ for
the process $\left(\zeta(t)\right)_{t\ge0}$. The conclusion of the
theorem is equivalent to proving that 
$$
(\left(Z_{N}(t), \frak T_N(t)\right)_{t\ge0},({\frak t}_{y,m}^N)_{m\ge1},(Z_N({\frak
  t}_{y,m}^{N}))_{m\ge1})
$$ converge in law, as $N\to+\infty$, to  
$$
((\zeta(t),\tau(t))_{t\ge0},({\frak
  t}_{y,m})_{m\ge1},(\zeta({\frak t}_{y,m}))_{m\ge1}).
$$ 
We prove this by an induction argument on $m$. Let us fix
$y>0$. 
Since~$(Z_{N}(t))_{t\ge0}$ converges in  law to $(Z(t))_{t\ge0}$ the
 processes 
 \begin{equation}
 \label{023112}
 M_{N}(t):={\cal M}(Z_{N})(t),\quad t\ge0.
 \end{equation}
 are likewise convergent to $M(t):={\cal M}(\zeta)(t)$, ${t\ge0}$.
Since 
$\bbP[\zeta(t)=\zeta(t-)]=1$ for each $t\ge0$ (see, for example, Proposition
1.2.7 of \cite{bertoin}) we have
$\bbP[M(t)=M({t-})]=1$, and, as a result, the finite-dimensional marginals of
$(M_N(t))_{t\ge0}$ converge to those of $(M(t))_{t\ge0}$, see, for instance,
Theorem 3.16.6 of \cite{billingsley}. Since, in addition the law
of $M(t)$ is absolutely continuous, see e.g. Theorem 4.6 of
\cite{KMR}, we have 
\begin{equation}
 \label{023112a}
\bbP[{\frak t}_{y,1}^N\le t]=\bbP[M_{N}(t)>y]\to \bbP[M(t)>y]=\bbP[{\frak t}_{y,1}\le t],\quad
\mbox{ as }N\to+\infty,
\end{equation}
for any $y,t>0$. 
% As a result we have
% $$
% \bbP[s<{\frak t}_{y,1}^N\le t]\to \bbP[s<{\frak t}_{y,1}\le t],\quad
% \mbox{ as }N\to+\infty
% $$
% for any $y,t>0$.
Hence, both   marginals of  $((Z_{N}(t),\frak T_N(t))_{t\ge0},{\frak t}_{y,1}^N)$
converge in law towards the respective laws of the marginals of
$((\zeta(t),\tau(t))_{t\ge0},{\frak t}_{y,1})$. We need to show the joint convergence.

Let us recall that ${\cal D}_2:=D([0,+\infty);\bbR\times\bar\bbR_+)$, and let $F: {\cal D}_2\times \bar\bbR_+\to\bbR$ be a bounded
and continuous function.
We need to show that, see Theorem 1.1.1(ii) of \cite{stroock-varadhan}, 
\begin{equation}
\label{012411-18}
\lim_{N\to+\infty}\bbE F ((Z_{N}(t),\frak T_N(t))_{t\ge0},{\frak t}_{y,1}^N)=\bbE F((\zeta(t),\tau(t))_{t\ge0},{\frak t}_{y,1}).
\end{equation}
It is straightforward to check that it is suffices to prove (\ref{012411-18}) only for functions of the form~$F(\omega,t)=G(\omega)\psi(t)$, 
with a bounded continuous function
$G:{\cal D}_2\to\bbR$ and a compactly supported continuous function $\psi:\bar\bbR_+\to\bbR$:
\begin{equation}
\label{022411-18}
\lim_{N\to+\infty}\bbE \left[G ((Z_{N}(t),\frak T_N(t))_{t\ge0})\psi({\frak
    t}_{y,1}^N)\right]=\bbE \left[G ((\zeta(t),\tau(t))_{t\ge0})\psi({\frak
    t}_{y,1}) \right].
\end{equation}
To this end, suppose that $t>0$ and $y>0$ are fixed, and
consider the function 
$$
{\frak F}_t(X,S)= G(X,S)1_{(y,+\infty)}(\pi_t\circ {\cal M}(X)),
$$ 
where
%$F:{\cal D}_2\to\bbR$ is bounded and continuous, $[0,+\infty)\ni t\mapsto
%(X(t),S(t))\in\bbR\times\bar\bbR_+$ is a cadlag function and
$\pi_t(X):=X(t)$. 
We claim that the set ${\rm
  Disc}({\frak F}_t)$ of
discontinuities of the function ${\frak F}_t$ has zero measure under the law of
$(\zeta(t),\tau(t))_{t\ge0}$. 
First, observe that 
${\rm Disc}(\pi_t\circ {\cal M})$ is of zero measure. 
Indeed, if $X\in {\rm Disc}(\pi_t\circ {\cal M})$, then  ${\cal
  M}(X)\in {\rm Disc}(\pi_t)$. Theorem~16.6(i)
of \cite{billingsley} implies that then ${\cal
  M}(X)(t-)\not={\cal
  M}(X)(t)$, which implies
$X(t-)\not=X(t)$,  and the latter set has zero measure under the law of
$(\zeta(t))_{t\ge0}$. On the other hand, 
if~$X\in {\rm Disc}(1_{(y,+\infty)}\circ\pi_t\circ {\cal M})$ 
but~$X\not\in {\rm Disc}(\pi_t\circ {\cal M})$, it follows that 
$\pi_t\circ {\cal M}(X)=y$, which is a set of measure zero, 
%since
%its measure is the same as
%$
%\bbP[M_t=y]=0,
%$ see  see e.g. 
by Theorem 4.6 of~\cite{KMR}.

The above implies that the set of discontinuities of ${\frak F}_t$ has measure zero. 
Hence, by Theorem 2.7 of \cite{billingsley}, we have
\begin{equation}
\label{022411-18a-bis}
\lim_{N\to+\infty}\bbE \left[{\frak F}_t
(  (Z_{N}(t),\frak T_N(t))_{t\ge0})\right]=\bbE \left[{\frak F}_t( (\zeta(t),\tau(t))_{t\ge0})\right],
\end{equation}
or equivalently
\begin{equation}
\label{022411-18b}
\lim_{N\to+\infty}\bbE \left[G (Z_{N}(t),\frak T_N(t))_{t\ge0} )1_{[0,t]}({\frak
    t}_{y,1}^N)\right]=\bbE \left[G ( (\zeta(t),\tau(t))_{t\ge0}) 1_{[0,t]} ({\frak
    t}_{y,1}) \right]
\end{equation}
for any $t>0$.
The above implies  
\begin{equation}
\label{022411-18c}
\lim_{N\to+\infty}\bbE \left[G (Z_{N}(t),\frak T_N(t))_{t\ge0} )1_{(s,t]}({\frak
    t}_{y,1}^N)\right]=\bbE \left[G ( (\zeta(t),\tau(t))_{t\ge0}) 1_{(s,t]} ({\frak
    t}_{y,1}) \right]
\end{equation}
for any $0\le s<t$. We can approximate (in the supremum norm) any compactly supported,
continuous function $\psi$ by step functions of the form
$\sum_{i=1}^Ic_i1_{(s_i,t_i]}$, $s_i<t_i$. This ends the proof of \eqref{022411-18}.

\subsection*{Convergence of $(\left(Z_{N}(t), \frak T_N(t)\right)_{t\ge0},{\frak t}_{y,1}^N, Z_{N}({\frak t}_{y,1}^N))$}

By the already proved part of the theorem  we  know that $(\left(Z_{N}(t), \frak T_N(t)\right)_{t\ge0},{\frak t}_{y,1}^N)$ 
converges in law to $(\left(\zeta(t), \tau(t)\right)_{t\ge0},{\frak t}_{y,1})$.
According to the Skorokhod embedding theorem, see e.g.  Theorem I.6.7 of \cite{billingsley}, we can assume that there exists 
a realization of the sequence of the processes 
$(\left(Z_{N}(t), \frak T_N(t)\right)_{t\ge0},{\frak t}_{y,1}^N)$,
over a certain probability space $(\Om,{\cal F},\bbP)$, such that
\begin{equation}
\label{012812}
\lim_{N\to+\infty}\rho_\infty((Z_N,\frak T_N),(\zeta,\tau))=0\quad \mbox{and}\quad \lim_{N\to+\infty}|{\frak t}_{y,1}^{N}-{\frak t}_{y,1}|=0,\quad \mbox{a.s.}
\end{equation}
and let 
$$
{\frak Z}^N_m:=Z_N({\frak t}_{y,m}^{N}),
\quad {\frak Z}_m:=\zeta({\frak t}_{y,m}),\quad N,m\ge1.
$$
\begin{lm}
\label{lm012912}
For the above realization of the sequence  $(\left(Z_{N}(t), \frak T_N(t)\right)_{t\ge0},{\frak t}_{y,1}^N)$,   we have
\begin{equation}
\label{012012}
\lim_{N\to+\infty}|{\frak Z}^N_1-{\frak Z}_1|=0,\quad \mbox{a.s.}
\end{equation}
\end{lm}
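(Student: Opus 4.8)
The plan is to reduce the whole statement to a property of the running maximum process, using the $J_1$-continuity of the supremum functional ${\cal M}$ together with the fact that the value of $\zeta$ at its first passage above $y$ coincides with a running maximum that jumps strictly across the level $y$. First I would record two elementary identities at the crossing. Since $Z_N$ is cadlag and ${\frak t}_{y,1}^N=\bar{T}_y(Z_N)$ is the first time $Z_N$ exceeds $y$, we have $Z_N(s)\le y$ for $s<{\frak t}_{y,1}^N$, so the crossing value is a fresh running maximum; writing $M_N:={\cal M}(Z_N)$ and $M:={\cal M}(\zeta)$, this gives ${\frak Z}_1^N=Z_N({\frak t}_{y,1}^N)=M_N({\frak t}_{y,1}^N)$ and likewise ${\frak Z}_1=\zeta({\frak t}_{y,1})=M({\frak t}_{y,1})$. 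Because ${\cal M}$ is $J_1$-continuous (Theorem 7.4.1 of \cite{whitt-s}) and, on the realization fixed in \eqref{012812}, $Z_N\to\zeta$ in the $J_1$-topology almost surely, it follows that $M_N\to M$ in $J_1$ almost surely.

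Next I would invoke the regularity of the stable maximum process to pin down the limit at $t_0:={\frak t}_{y,1}$. Since the ascending ladder height process of $\zeta$ is a pure-jump stable subordinator (no drift) for the index $1+1/\beta\in(1,2)$, the process $M$ does not creep, so almost surely $M(t_0-)<y<M(t_0)$, and because $\zeta(t_0)>y>M(t_0-)$ we get $M(t_0)=\zeta(t_0)={\frak Z}_1$. The strict overshoot and the absolute continuity excluding the boundary case $M(t_0-)=y$ are exactly Corollary 2.2 of \cite{millar} and Theorem 4.6 of \cite{KMR}, already used in Lemma \ref{lm010201-19} and in the first part of this appendix. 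The strict separation $M(t_0-)<y$ is the crucial structural input.

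Finally I would run a time-change argument. By the characterization \eqref{hom}, choose homeomorphisms $\mu_N$ with $\sup_{[0,T]}|t-\mu_N(t)|\to 0$ and $\delta_N:=\sup_{[0,T]}|M_N(t)-M(\mu_N(t))|\to 0$ on a fixed horizon $T>t_0+1$. Setting $t_N:={\frak t}_{y,1}^N$, the convergence in \eqref{012812} gives $t_N\to t_0$, hence $\mu_N(t_N)\to t_0$ and $M_N(t_N)=M(\mu_N(t_N))+O(\delta_N)$. The crossing forces $M_N(t_N)>y$; if $\mu_N(t_N)<t_0$ along a subsequence, then monotonicity of $M$ yields $M(\mu_N(t_N))\le M(t_0-)<y$, which contradicts $M_N(t_N)>y$ once $\delta_N<y-M(t_0-)$. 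Therefore $\mu_N(t_N)\ge t_0$ for all large $N$, so $\mu_N(t_N)\downarrow t_0$ and right-continuity of $M$ gives $M(\mu_N(t_N))\to M(t_0)$, whence ${\frak Z}_1^N=M_N(t_N)\to M(t_0)={\frak Z}_1$, which is \eqref{012012}.

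I expect the main obstacle to be precisely this last dichotomy: bare $J_1$ convergence of $M_N$ to $M$ together with $t_N\to t_0$ does not by itself determine the limit of $M_N(t_N)$ at a jump time of $M$, since the value could a priori converge to $M(t_0-)$ rather than $M(t_0)$. What resolves the ambiguity is the conjunction of three facts — that $M_N(t_N)$ is pinned above $y$ by the crossing, that $M$ is monotone, and that $M(t_0-)<y$ strictly — which together force the aligning times $\mu_N(t_N)$ to approach $t_0$ from the right. Consequently the genuinely nontrivial step is establishing $M(t_0-)<y$, i.e.\ the no-creeping property of the stable maximum, and I would take care to cite \cite{millar} and \cite{KMR} for it rather than re-derive it.
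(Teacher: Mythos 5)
Your argument is correct, and it rests on exactly the same two probabilistic inputs as the paper's proof: the strict overshoot $\zeta({\frak t}_{y,1})>y$ from Corollary 2.2 of \cite{millar} (the paper's null set ${\cal N}_0$), and the nullity of the event that the process approaches the level continuously from below before jumping over it (the paper's ${\cal N}_1$, handled via Lamperti's proposition). Where you differ is in the deterministic step. The paper works directly with $Z_N$, $\zeta$ and the time-change homeomorphisms $\la_N$ from \eqref{022812}, and shows the rather delicate statement \eqref{010401-19} that $\la_N(\bar T_y(Z_N))$ is eventually \emph{exactly equal} to $\bar T_y(\zeta)$, by ruling out each strict inequality along subsequences; the value convergence then follows from the uniform closeness of $Z_N$ and $\zeta\circ\la_N$. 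You instead pass to the running maxima $M_N={\cal M}(Z_N)$, $M={\cal M}(\zeta)$, exploit the $J_1$-continuity of ${\cal M}$, and use monotonicity plus the pinning $M_N({\frak t}^N_{y,1})>y>M({\frak t}_{y,1}-)$ to force the aligning times to approach ${\frak t}_{y,1}$ from the right, after which right-continuity finishes the job. This buys you a cleaner dichotomy (no need for the exact-equality claim, only a one-sided eventual inequality), at the cost of introducing ${\cal M}$ and the extra structural fact $M({\frak t}_{y,1}-)<y$.

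One point to tighten: you attribute $\bbP[M({\frak t}_{y,1}-)=y]=0$ to Theorem 4.6 of \cite{KMR}, but that result gives absolute continuity of the law of $M(t)$ at a \emph{fixed} time $t$, and ${\frak t}_{y,1}$ is a random time defined through the level $y$ itself, so it does not apply directly. You should instead reduce $\{M({\frak t}_{y,1}-)=y\}$ to the event $\{\zeta({\frak t}_{y,1}-)=y\}$ — noting that the pre-crossing supremum, if it equals $y$, can only be attained or approached at time ${\frak t}_{y,1}-$, since by regularity of $(y,+\infty)$ for the boundary point the process cannot touch or accumulate at the level $y$ strictly before $\bar T_y$ — and then invoke the proposition of \cite{lamperti} exactly as the paper does for ${\cal N}_1$. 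This is a citation and bookkeeping repair, not a gap in the structure of your argument.
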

{\bf Proof.} Assume that $y>0$.
Thanks to \eqref{012812}, 
there exist a sequence $\la_N$ of increasing homeomorphisms of $[0,+\infty)$ such that
for any $T>0$, we have, a.s:
\begin{align}
\label{022812}
&
\lim_{N\to+\infty}\sup_{t\in[0,T]}|Z_N(t)-\zeta\circ\la_N(t)|=0,~~
\lim_{N\to+\infty}\sup_{t\in[0,T]}|t-\la_N(t)|=0,~~ %\nonumber\\
%&
\lim_{N\to+\infty}\bar{T}_y(Z_{N})=\bar{T}_y(\zeta),%\nonumber
\end{align}
%all of them a.s.
%From this we get
hence
\begin{equation}
\label{032812}
\lim_{N\to+\infty}\la_N\left(\bar{T}_y(Z_{N})\right)=\bar{T}_y(\zeta),\quad \mbox{a.s.}
\end{equation}
We claim that for $\bbP$ a.s. $\om\in\Om$ there exists $N_0(\om)$ such that
\begin{equation}
\label{010401-19}
\la_N\left(\bar{T}_y(Z_{N}(\om))\right)=\bar{T}_y(\zeta(\om)),\quad
N\ge N_0.
\end{equation}
Indeed, consider  two cases.

{\em Case (1)} For a given $\om\in \Om$ there exists an infinite sequence $N_k$ such that
\begin{equation}
\label{042812}
\la_{N_k}\left(\bar{T}_y(Z_{N_k}(\om))\right)>\bar{T}_y(\zeta(\om)).
\end{equation}
Then, there exists a (random) sequence $(t_{N_k})$ that satisfies
$$
\bar{T}_y(Z_{N_k}(\om)) >t_{N_k}>\la_{N_k}^{-1}\left(\bar{T}_y(\zeta(\om))\right).
$$
From \eqref{032812}, we conclude that
$$
\lim_{k\to+\infty} t_{N_k}=\bar{T}_y(\zeta(\om)),
$$
therefore, by  \eqref{022812}, we have
$$
\lim_{k\to+\infty} \la_{N_k}(t_{N_k})=\bar{T}_y(\zeta(\om)).
$$
From the right continuity of $\zeta$ and (\ref{042812}), we deduce  that then
 \begin{equation}
\label{052812}
\lim_{k\to+\infty}\zeta\left(\la_{N_k}\left(t_{N_k}\right)\right)=\zeta\left(\bar{T}_y(\zeta(\om))\right).
\end{equation}
From the first equality in \eqref{022812} we infer that
\begin{equation}
\label{052812aaa}
\lim_{k\to+\infty}Z_{N_k}\left(t_{N_k}\right)=\zeta\left(\bar{T}_y(\zeta(\om))\right).
\end{equation}
However, since $\bar{T}_y(Z_{N_k}(\om)) >t_{N_k}$ we have
$$
Z_{N_k}\left(t_{N_k}\right)\le y,
$$
which would imply that
\begin{equation}\label{may602}
\zeta\left(\bar{T}_y(\zeta(\om))\right)\le y,
\end{equation}
hence
\begin{equation}\label{may604}
\omega\in{\cal N}_0=\{\omega:~\zeta\left(\bar{T}_y(\zeta(\om))\right)=y\}.
\end{equation}
According to Corollary 2.2 of \cite{millar}, the probability of  ${\cal N}_0$ is zero.  
%there exists a null event ${\cal
%  N}_0$ such that $\om\in{\cal N}_0$.

\bigskip

{\em Case (2)}. For a given $\om\in \Om$ there exist infinitely many
$N_k$-s such that
\begin{equation}
\label{062812}
\la_{N_k}\left(\bar{T}_y(Z_{N_k}(\om))\right)< \bar{T}_y(\zeta(\om)),
\end{equation}
so that
 \begin{equation}
\label{052812a}
\lim_{k\to+\infty}\zeta\left(\la_{N_k}\left(\bar{T}_y(Z_{N_k}(\om))\right)\right)=\zeta\left(\bar{T}_y(\zeta(\om))-\right)\le y.
\end{equation}
On the other hand we have
$$
Z_{N_k}\left(\bar{T}_y(Z_{N_k}(\om))\right)\ge y,
$$
and, by \eqref{022812},
\begin{equation}
\label{062812a}
\lim_{k\to+\infty}\left|\zeta\left(\la_{N_k}\left(\bar{T}_y(Z_{N_k}(\om))\right)\right)-
Z_{N_k}\left(\bar{T}_y(Z_{N_k}(\om))\right)\right|=0,
\end{equation}
so that
\begin{equation}
\label{062812b}
\lim_{k\to+\infty}\zeta\left(\la_{N_k}\left(\bar{T}_y(Z_{N_k}(\om))\right)\right)= 
\lim_{k\to+\infty} Z_{N_k}\left(\bar{T}_y(Z_{N_k}(\om))\right)\ge y.
\end{equation}
Comparing to \eqref{052812a}, we see that
\begin{equation}
\label{062812c}
\lim_{k\to+\infty}\zeta\left(\la_{N_k}\left(\bar{T}_y(Z_{N_k}(\om))\right)\right)= 
\lim_{k\to+\infty} Z_{N_k}\left(\bar{T}_y(Z_{N_k}(\om))\right)=y.
\end{equation}
Therefore,
from \eqref{052812a} and \eqref{062812c} we get
\begin{equation}
\label{052812b}
\lim_{k\to+\infty}\zeta\left(\la_{N_k}\left(\bar{T}_y(Z_{N_k}(\om))\right)\right)=\zeta\left(\bar{T}_y(\zeta(\om))-\right)=
y.
\end{equation}
Hence, we either have
\begin{equation}\label{may606}
\omega\in {\cal N}_1=[\omega:~\zeta\left(\bar{T}_y(\zeta(\om))-\right)=y,~\zeta\left(\bar{T}_y(\zeta(\om))\right)>y],
\end{equation}
an event that has probability zero by Proposition, on p. 695 of 
\cite{lamperti}, or $\omega\in{\cal N}_0$. We conclude that  \eqref{010401-19} holds. 
%
%If $\zeta\left(\bar{T}_y(\zeta(\om))\right)>y$, then
%by a Proposition, on p. 695 of 
%\cite{lamperti}  there exists an event
%${\cal N}_1$ such that $\bbP[{\cal N}_1]=0$ and $\om\in {\cal N}_1$.
%If, on the other hand $\zeta\left(\bar{T}_y(\zeta(\om))\right)=y$,
%then by the result of \cite{millar} invoked in the previous case we
%conclude that $\om\in {\cal N}_0$. In any case though $\om\in {\cal
%  N}_0\cup{\cal N}_1$ and \eqref{010401-19} is in force.
This however, obviously implies \eqref{012012}, as
$$
{\frak Z}_1^N=Z_N\left(\bar{T}_y(Z_{N}(\om))\right)\quad\mbox{and}\quad {\frak Z}_1=\zeta\left(\bar{T}_y(\zeta(\om))\right),
$$
finishing the proof. \qed

\bigskip

\subsection*{Generalization to subsequent exit times -- the end of the
  proof of Theorem \ref{thm013112}}

\begin{cor}
\label{cor012912}
Under the assumptions of Lemma $\ref{lm012912}$, for any $y\in\bbR$ we have
\begin{equation}
\label{012912}
\lim_{N\to+\infty}\rho_\infty(\theta_{{\frak t}_{y,1}^{N}}(Z_N),\theta_{{\frak t}_{y,1}}(\zeta))=0,\quad \mbox{a.s.}
\end{equation}
\end{cor}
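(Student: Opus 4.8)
The plan is to transport the almost-sure Skorokhod coupling already available for $\big((Z_N,\frak T_N)_{t\ge0},{\frak t}^N_{y,1}\big)$ onto the time-shifted processes by shifting the time-reparametrizations themselves. Working on the probability space of Lemma~\ref{lm012912}, where \eqref{012812} holds, I would first invoke the characterization \eqref{hom} of $\rho_\infty$-convergence (Theorem 16.1 of \cite{billingsley}) to produce increasing homeomorphisms $\la_N$ of $[0,+\infty)$ with $\sup_{0\le s\le T}|s-\la_N(s)|\to0$ and $\sup_{0\le s\le T}|Z_N(s)-\zeta(\la_N(s))|\to0$ for every $T>0$. Assuming $y>0$ (the case $y<0$ being analogous after reflection), I would then define, for $N\ge N_0$, the shifted time change
$$
\tilde\la_N(t):=\la_N\big(t+{\frak t}^N_{y,1}\big)-{\frak t}_{y,1},\qquad t\ge0,
$$
and propose to use $(\tilde\la_N)$ as the homeomorphisms witnessing the convergence \eqref{012912}.

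Next I would check that $\tilde\la_N$ satisfies the two requirements of \eqref{hom} for the shifted processes. Since ${\frak t}^N_{y,1}\to{\frak t}_{y,1}$, these crossing times are bounded by some finite $c$, so for $t\in[0,T]$ the arguments $t+{\frak t}^N_{y,1}$ stay in the fixed compact $[0,T+c]$, and every estimate reduces to one already known on $[0,T+c]$. For the spatial condition I would use the identity $\theta_{{\frak t}_{y,1}}(\zeta)\circ\tilde\la_N(t)=\zeta\big(\la_N(t+{\frak t}^N_{y,1})\big)$, so that
$$
\sup_{0\le t\le T}\big|Z_N(t+{\frak t}^N_{y,1})-\zeta(\la_N(t+{\frak t}^N_{y,1}))\big|\le \sup_{0\le s\le T+c}|Z_N(s)-\zeta(\la_N(s))|\longrightarrow0,
$$
and for the time-distortion condition I would split
$$
\tilde\la_N(t)-t=\big[\la_N(t+{\frak t}^N_{y,1})-(t+{\frak t}^N_{y,1})\big]+\big[{\frak t}^N_{y,1}-{\frak t}_{y,1}\big],
$$
the first bracket being controlled by $\sup_{0\le s\le T+c}|\la_N(s)-s|$ and the second tending to $0$ by \eqref{012812}. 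Feeding these two limits back into \eqref{hom} yields \eqref{012912}.

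The delicate point — the one step that is not purely mechanical — is verifying that $\tilde\la_N(0)=0$, i.e. that $\la_N({\frak t}^N_{y,1})={\frak t}_{y,1}$ for all large $N$; this is exactly what makes $\tilde\la_N$ map $[0,+\infty)$ onto itself rather than onto a shifted half-line, and hence a genuine increasing homeomorphism. I would obtain it directly from \eqref{010401-19} in the proof of Lemma~\ref{lm012912}, recalling that ${\frak t}^N_{y,1}=\bar{T}_y(Z_N)$ and ${\frak t}_{y,1}=\bar{T}_y(\zeta)$. This is where the genuine probabilistic input enters: it rests on the fact that the stable process $\zeta$ almost surely neither lands exactly at the level $y$ nor creeps continuously across it at its first passage, so that the exceptional events ${\cal N}_0$ and ${\cal N}_1$ are null. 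For the random but finite range $N<N_0$ where the identity may fail, one simply sets $\tilde\la_N:=\mathrm{id}$, which is irrelevant to the limit.
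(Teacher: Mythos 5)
Your proposal is correct and follows essentially the same route as the paper: both define the shifted homeomorphisms $\tilde\la_N(t)=\la_N(t+{\frak t}^N_{y,1})-\la_N({\frak t}^N_{y,1})$ (your version coincides with this for $N\ge N_0$ via \eqref{010401-19}), verify the two conditions of \eqref{hom} by restricting to a compact $[0,T+c]$, and rest on the key identity $\la_N(\bar T_y(Z_N))=\bar T_y(\zeta)$ established in the proof of Lemma \ref{lm012912}. Your explicit attention to why $\tilde\la_N(0)=0$ is exactly the point the paper's proof also hinges on.
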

{\bf Proof.}
%We  equalities of \eqref{022812}, as well as  \eqref{012812}, are in force.  
Define the following increasing homeomorphism of $[0,+\infty)$:
$$
\tilde \la_N(t):=\la_N\left(\bar{T}_y(Z_{N}(\om))+t\right)-\la_N(\bar{T}_y(Z_{N}(\om))),\quad t\ge0.
$$
Thanks to the first two equalities in \eqref{022812}, for any $T>0$ we have
\begin{align}
\label{022812b}
&
\lim_{N\to+\infty}\sup_{t\in[0,T]}|Z_N(\bar{T}_y(Z_{N}(\om))+t)-\zeta(\tilde \la_N(t)+\la_N(\bar{T}_y(Z_{N}(\om))))|=0,\\
&
\lim_{N\to+\infty}\sup_{t\in[0,T]}|t-\tilde \la_N(t)|=0.\nonumber
\end{align}
It follows from the argument in the proof of Lemma \ref{lm012912}  that there exists a $\bbP$-null set ${\cal N}$ such that for each $\om\not\in {\cal N}$ there exists $N_0$, for which   \eqref{042812} holds for all $N\ge N_0$. 
% $$
% \tilde \la_N(t)+\la_N\left(\bar{T}_x(Z_{N}(\om)\right)-\bar{T}_x(\zeta(\om))=\tilde \la_N(t).
% $$
% for all sufficiently large $N$-s. 
% In addition,
% \begin{align}
% \label{022812c}
% &
% \limsup_{N\to+\infty}\sup_{t\in[0,T]}\left|Z_N(\bar{T}_x(Z_{\cdot,N}(\om))+t)-Z\left(\bar{T}_x(Z_{\cdot}(\om))+\tilde \la_N(t)\right)\right|\nonumber\\
% &
% \le\limsup_{N\to+\infty}\sup_{t\in[0,T]}\left|Z_N(\bar{T}_x(Z_{\cdot,N}(\om))+t)-Z\left(\bar{T}_x(Z_{\cdot}(\om))+\tilde \la_N(t)\right)\right|\nonumber
% \end{align}
% We also have
From this equality we conclude that
\begin{align*}
 &
 \limsup_{N\to+\infty}\sup_{t\in[0,T]}|\theta_{\bar{T}_y(Z_{N}(\om) )}(Z_N)(t)-\theta_{\bar{T}_y(\zeta(\om))}(\zeta)(\tilde \la_N(t))|\\
 &
 =
   \limsup_{N\to+\infty}\sup_{t\in[0,T]}|Z_N(\bar{T}_y(Z_{,N}(\om))+t)-\zeta(\tilde \la_N(t)+\la_N(\bar{T}_y(Z_{\cdot,N}(\om))))|\\
&
 = \limsup_{N\to+\infty}\sup_{t\in[0,T]}|Z_N(\bar{T}_y(Z_{N}(\om))+t)-\zeta(\la_N(t+\bar{T}_y(Z_{N}(\om))))|=0
\end{align*}
for any $T>0$.
We have shown therefore that
\eqref{012912} holds.
\qed

\bigskip

Let
\begin{equation}
\label{tildeZ}
 Z'_{N}(t;\om):=\theta_{{\frak t}_{y,1}^N}(Z_N)(t),\quad 
\zeta'(t):=\theta_{{\frak t}_{y,1} }(\zeta)(t),\quad t\ge0
\end{equation}
and
$$
\tilde{\frak t}^{N}_{1,y}(\om)=\underbar{T}_y(
Z'_{N}(\om)),\quad \tilde{\frak t}_{y,1}(\om)=\underbar{T}_x( \zeta'(\om)).
$$
Note that
\begin{equation}
\label{013112}
{\frak t}^{N}_{y,2}(\om)={\frak t}^{N}_{y,1}(\om)+ \tilde{\frak t}^{N}_{y,1}(\om),\quad {\frak t}_{y,2}(\om)= {\frak t}_{y,1}(\om)+ \tilde{\frak t}_{y,1}(\om).
\end{equation}
Repeating the argument used in the proof of \eqref{023112a} we conclude that
$$
\bbP[\tilde{\frak t}^{N}_{y,1}\le t]\to \bbP[\tilde {\frak t}_{y,1}\le t],\quad
\mbox{ as }N\to+\infty.
$$
% \textcolor{red}{{\em This argument to be commented.}
% From our construction so far we know that
% $$
% \lim_{N\to+\infty}{\frak Z}_1^N ={\frak Z}_1,\quad \mbox{a.s.}
% $$
% The process 
% $
% \left(\tilde Z_{N}(t;\om)- {\frak Z}_1^N \right)_{t\ge0}
% $
% converges in law to 
% $
% \left(\tilde
% \zeta(t)- {\frak Z}_1 \right)_{t\ge0}.
% $
% This in particulal implies that
% $
% {\cal M}'\left( \tilde Z_{N}- {\frak Z}_1^N \right)
% $
% converges in law to
% $
% {\cal M}'\left(\tilde
% \zeta- {\frak Z}_1 \right).
% $
% In consequence, for any $t\ge0$ we have
% $$
% {\cal M}'\left( \tilde Z_{N}- {\frak Z}_1^N \right)(t)\Rightarrow {\cal M}'\left(\tilde
% \zeta- {\frak Z}_1 \right)(t),
% $$
% as 
% $$
% \bbP\left[ {\cal M}'\left(\tilde
% \zeta- {\frak Z}_1 \right)(t-)\not=  {\cal M}'\left(\tilde
% \zeta- {\frak Z}_1 \right)(t)\right]=0.
% $$
% Hence,
% $$
% {\frak Z}_1^N +{\cal M}'\left( \tilde Z_{N}- {\frak Z}_1^N \right)(t)\Rightarrow {\frak Z}_1+ {\cal M}'\left(\tilde
% \zeta- {\frak Z}_1 \right)(t).
% $$
% We can write
% \begin{align*}
% &
% \bbP[\tilde{\frak t}^{N}_{y,1}\le t]=\bbP\left[{\frak Z}_1^N +{\cal
%   M}'\left( \tilde Z_{N}- {\frak Z}_1^N \right)(t)<y\right] \to
% \\
% &
% \bbP\left[ {\frak Z}_1+ {\cal M}'\left(\tilde
% \zeta- {\frak Z}_1 \right)(t)<y\right]=\bbP[\tilde {\frak t}_{y,1}\le
%   t].
% \end{align*}
% {\em the end of the commented argument}}
This also proves  the tightness of the   random
elements~$((Z_{N}(t),\frak T_N(t))_{t\ge0},
{\frak t}^{N}_{y,1}, {\frak t}^{N}_{y,2},{\frak Z}_1^N)$, $N\ge1$. Using the
same  argument as in the proof of \eqref{012411-18} we can reduce the
proof of the convergence in law to
showing that  
for any bounded and continuous functions
$F:{\cal D}_2\times\bar \bbR_+\times\bbR \to\bbR$ and compactly supported continuous $\psi:\bar\bbR_+\to\bbR$ we have
\begin{equation}
\label{022411-18a}
\lim_{N\to+\infty}\bbE \left[F ((Z_{N}(t),\frak T_N(t))_{t\ge0}, {\frak
    t}^{N}_{y,1},{\frak Z}_1^N)\psi( \tilde{\frak
    t}^{N}_{y,1})\right]=\bbE \left[F (\zeta(t),\tau(t))_{t\ge0},
  {\frak t}_{y,1},{\frak Z}_1)\psi( \tilde{\frak t}_{y,1}) \right].
\end{equation}
Suppose that $t>0$ and
consider the function 
$$
{\frak F}_t:(X,S,s,z)\mapsto F(X,S,s)1_{(y,+\infty)}(\pi_t\circ {\cal
  M}'(\theta_s(X)),
$$ where $(X,S,s,z)\in{\cal D}_2\times\bar \bbR_+\times\bbR $, 
$F$ is as above and
$\pi_t(X):=X(t)$, and let $Q$ be  the law 
of~$\left( (\zeta(t),\tau(t))_{t\ge0},  {\frak t}_{y,1},{\frak Z}_1\right)$. 
We claim that the set ${\rm
  Disc}({\frak F}_t)$ of
discontinuities of the function ${\frak F}_t$ is $Q$-null.
Indeed, first observe that  the set $D$ of discontinuities of 
$$
(X,S,s,z)\mapsto \pi_t\circ {\cal M}'(\theta_s(X))
$$ is  $Q$-null.
If $(X,S,s,z)\in D$, then  ${\cal
  M}'(\theta_s(X))\in {\rm Disc}(\pi_t)$. According to Theorem 16.6(i)
of~\cite{billingsley}, this is equivalent to ${\cal
  M}'(\theta_s(X) )(t-)\not={\cal
  M}'(\theta_s(X))(t)$. However, this set is %e set of those $X$-s is
contained in
$$
[(X,s):\,X(s+t-)\not=X(s+t)].
$$ 
The $Q$-probability of the latter is
\begin{equation}
\label{043112-18}
\bbP\left[
\zeta( {\frak t}_{y,1}+t-)\not=\zeta( {\frak
    t}_{y,1}+t)\right]=\bbE\left\{\bbP\left[\zeta( {\frak
      t}_{y,1}+t-)\not=\zeta( {\frak t}_{y,1}+t)\Big|{\cal F}_{{\frak
      t}_{y,1}}\right]\right\}.
\end{equation}
The strong Markov property implies that the process $\left(\zeta( {\frak
    t}_{y,1}+t)-{\frak Z}_1\right)_{t\ge0}$ is independent of the $\si$-algebra ${\cal F}_{{\frak
      t}_{xy1}}$ corresponding to the stopping time ${\frak
    t}_{y,1}$, and the right  side of~\eqref{043112-18}
  equals
 \begin{equation}
\label{043112-18a}
\bbP\left[\zeta(t-)\not=\zeta(t)\right]=0.
\end{equation}
Suppose now that 
$(X,s)\in D'$, the discontinuity set of 
$$(X,S,s,z)\mapsto
1_{(y,+\infty)}\circ\pi_t\circ {\cal M}'(X\circ \theta_s)
$$ and 
$(X,S,s,z)\not\in D$, so  that 
$\pi_t\circ {\cal M}'(\theta_s(X))=y$. Its probability equals
$$
\bbP\Big[\inf_{{\frak t}_{y,1}\le u\le {\frak t}_{x,1}+t}\zeta(u)=y
\Big]=
\bbE\bbP[M'(t)=y-z]_{z={\frak Z}_1},
$$
where $M'(t)={\cal M}'(\zeta)(t)$. By symmetry, the expression in the
right equals
$$
\bbE\bbP[M(t)=z-y]_{z={\frak Z}_1}=0,
$$
 as the law of $M(t)$ is absolutely continuous.
It follows that the set of discontinuities of ${\frak F}_t$ is
null. Hence, see Theorem 2.7 of \cite{billingsley}, we have
\begin{equation}
\label{022411-18aa}
\lim_{N\to+\infty}\bbE \left[{\frak F}_t
  ((Z_{N}(t),\frak T_N(t))_{t\ge0}, {\frak t}^{N}_{y,1},{\frak Z}_1^N)\right]=\bbE \left[{\frak F}_t (\zeta(t),\tau(t))_{t\ge0}, {\frak t}_{y,1}, {\frak Z}_1)\right],
\end{equation}
or equivalently
\begin{equation}
\label{022411-18bb}
\lim_{N\to+\infty}\bbE \left[F ((Z_{N}(t),\frak T_N(t))_{t\ge0}, {\frak t}^{N}_{y,1}, {\frak Z}_1^N)1_{[0,t]}({\frak
    t}_{x,2}^{N})\right]=\bbE \left[F ((\zeta(t),\tau(t))_{t\ge0},
  {\frak t}_{y,1},{\frak Z}_1) 1_{[0,t]} ({\frak
    t}_{y,2}) \right]
\end{equation}
for any $t>0$.
The above implies that 
\begin{equation}
\label{022411-18cc}
\lim_{N\to+\infty}\bbE \left[F ((Z_{N}(t),\frak T_N(t))_{t\ge0} ,{\frak
    t}^{N}_{y,1},{\frak Z}_1^N)1_{(s,t]}({\frak
    t}_{y,2}^{N})\right]=\bbE \left[F ((\zeta(t),\tau(t))_{t\ge0},
  {\frak t}_{y,1},{\frak Z}_1) 1_{(s,t]} ({\frak
    t}_{y,2}) \right]
\end{equation}
for any $0\le s<t$. We can approximate (in the supremum norm) any compactly supported
function $\psi$ by step functions of the form
$\sum_{i=1}^Ic_i1_{(s_i,t_i]}$, $s_i<t_i$. This ends the proof of
\eqref{022411-18a}.

By the previous argument  we already know that the random elements
\begin{equation}
\label{010201-19}
\left((Z_{N}(t),\frak T_N(t))_{t\ge0},{\frak t}_{y,1}^{N},{\frak
    t}_{y,2}^{N},{\frak Z}_1^N\right)
\end{equation}
 converge in law to 
\begin{equation}
\label{020201-19}
\left((\zeta(t),\tau(t))_{t\ge0},{\frak t}_{y,1},{\frak t}_{y,2},
  {\frak Z}_1\right).
\end{equation}
According to the Skorokhod embedding theorem, we can assume that there
exist realizations of the random elements \eqref{010201-19}, \eqref{020201-19} such that
\begin{align}
\label{030201-19}
&
\lim_{N\to+\infty}\rho_\infty((Z_N,\frak T_N),(\zeta,\tau))=0\quad \mbox{and}\\
&
\sum_{i=1}^2\lim_{N\to+\infty}|{\frak t}_{y,i}^{N}-{\frak
  t}_{y,i}|=0,\quad \lim_{N\to+\infty}|{\frak Z}_1^N-
  {\frak Z}_1|=0,\quad 
\mbox{a.s.}\nonumber
\end{align}
By Corollary \ref{cor012912} we have
\begin{equation}
\label{tildeZ1}
\lim_{N\to+\infty}\rho_\infty(Z'_{N}, \zeta')=0,
\end{equation}
where $\tilde Z_{N}$ and $\tilde \zeta$ are defined by \eqref{tildeZ}.
We can repeat the argument used in the proof of Lemma~\ref{lm012912} and
conclude that the set of events $\om$, for which  
$$
\lim_{N\to+\infty}{\frak Z}^N_2\not={\frak Z}_2
$$
is contained in the set ${\cal N}$ of events $\om$ such that
\begin{equation}
\label{052812c}
\zeta\left(\underbar{T}_y(\zeta(\om))-\right)=
y<\zeta\left(\underbar{T}_y(\zeta(\om))\right),\quad\mbox{or} \quad
\zeta\left(\underbar{T}_y(\zeta(\om))\right)\le y,
\end{equation}
which again by the same arguments as used there is of null probability.

The above argument, can be continued by induction and allows us to
conclude the proof of Theorem \ref{thm013112}.
\qed

\subsection*{A  further generalization}
The argument of the present section, esentially without any modification, can be
used to prove a slight generalization of Theorem
\ref{thm013112}, that we have used in the proof of Theorem~\ref{thm011402-19}.
Suppose that $\left(
  \zeta_N(t,y)\right)_{t\ge0}$ is a sequence of processes that satisfy
$\zeta_N(0,y)=y$ and 
converge
in law, as~$N\to+\infty$,
in the $J_1$-topology over $D[0,+\infty)$ to $\left(
  \zeta(t,y)\right)_{t\ge0}$. 
We can define the consecutive crossing times ${\frak s}^N_{y,m}$ $N,m=1,2,\ldots$ for $\left(
  \zeta_N(t)\right)_{t\ge0}$ between the  half-lines $\bbR_-$ and~$\bbR_+$.
\begin{thm}
\label{thmA}
For any $y\in\bbR_*$ the random elements
\[
((\zeta_{N}(t,y))_{t\ge0},({\frak s}_{y,m}^N)_{m\ge1},(\zeta_N({\frak
  s}_{y,m}^{N},y))_{m\ge1})
\]
converge in law, as $N\to+\infty$, over
$D[0,+\infty)\times \bar\bbR^{\bbN}_+\times \bbR^{\bbN}$ with the product of the $J_1$ and standard
product topology on $(\bbR^{\bbN})^2$, to  
$((\zeta(t,y))_{t\ge0},({\frak
  u}_{y,m})_{m\ge1},(\zeta({\frak u}_{y,m},y,k)_{m\ge1}).
$ 
\end{thm}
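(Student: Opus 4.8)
\textbf{Proof of Theorem \ref{thmA}.} The statement is Theorem \ref{thm013112} with two changes: the time-change component $(\fT_N(t;k))_{t\ge0}$ (with its limit $(\tau(t))_{t\ge0}$) is simply absent, and the concrete random walk process $(Z_N(t,y,k))_{t\ge0}$ is replaced by an abstract sequence $(\zeta_N(t,y))_{t\ge0}$ that is assumed only to converge in law to $(\zeta(t,y))_{t\ge0}$ in the $J_1$-topology. The plan is to inspect the proof given in Appendix \ref{appA} and to observe that it never uses the random walk structure of $Z_N$ beyond its $J_1$-convergence, whereas every remaining ingredient is a path property of the \emph{limit} $\zeta$. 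Since $\zeta$ is unchanged, the same induction on $m$ applies verbatim; dropping the $\fT_N$ coordinate only simplifies matters, as in Theorem \ref{thm013112} it converged to the deterministic continuous limit $\tau$ and was carried along passively.

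\textbf{Base case.} First I would establish joint convergence of $((\zeta_N(t,y))_{t\ge0},{\frak s}_{y,1}^N)$. Taking $y>0$, the first crossing satisfies $[{\frak s}_{y,1}^N\le t]=[\underbar{T}_0(\zeta_N)\le t]=[{\cal M}'(\zeta_N)(t)<0]$. Since ${\cal M}'$ is $J_1$-continuous (Theorem 7.4.1 of \cite{whitt-s}), ${\cal M}'(\zeta_N)$ converges in law to ${\cal M}'(\zeta)$; as $\zeta$ has no fixed-time discontinuities and, by symmetry, the law of ${\cal M}'(\zeta)(t)$ is absolutely continuous (Theorem 4.6 of \cite{KMR}), the one-dimensional marginals converge and $\bbP[{\frak s}_{y,1}^N\le t]\to\bbP[{\frak u}_{y,1}\le t]$. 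For the joint statement I would run the continuous mapping argument of Appendix \ref{appA} with the functional ${\frak F}_t(X)=G(X)1_{(-\infty,0)}(\pi_t\circ{\cal M}'(X))$, whose discontinuity set is null under the law of $\zeta$ because $\{X:\pi_t\circ{\cal M}'(X)=0\}$ and $\{X:X(t-)\neq X(t)\}$ are null, exactly as in the computation there. One then appends the value at the crossing, ${\frak Z}_1^N:=\zeta_N({\frak s}_{y,1}^N)$, through the analog of Lemma \ref{lm012912}: after a Skorokhod coupling with increasing homeomorphisms $\la_N$, one shows $\la_N({\frak s}_{y,1}^N)={\frak u}_{y,1}$ for all large $N$, a.s., the only exceptional events being $\{\zeta(\underbar{T}_0(\zeta))=0\}$ and $\{\zeta(\underbar{T}_0(\zeta)-)=0\}$, which are null by Corollary 2.2 of \cite{millar} and the result on p.~695 of \cite{lamperti}. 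This yields $\zeta_N({\frak s}_{y,1}^N)\to\zeta({\frak u}_{y,1})$ a.s.

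\textbf{Inductive step.} To pass from $m$ to $m+1$ I would use the shift $\theta_{{\frak s}_{y,1}^N}$ together with the analog of Corollary \ref{cor012912}, namely $\rho_\infty(\theta_{{\frak s}_{y,1}^N}(\zeta_N),\theta_{{\frak u}_{y,1}}(\zeta))\to0$ a.s.\ under the coupling, and then the strong Markov property of $\zeta$, which makes the post-${\frak u}_{y,1}$ path a copy of $\zeta$ started afresh at ${\frak Z}_1$. This reduces the $(m+1)$-st crossing data to the base case applied to the shifted processes; the relevant functionals are again a.s.\ continuous, since the strong Markov property yields $\bbP[\zeta({\frak u}_{y,1}+t-)\neq\zeta({\frak u}_{y,1}+t)]=\bbP[\zeta(t-)\neq\zeta(t)]=0$ and the running extremum over the shifted window still has an absolutely continuous law. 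The case $y<0$ is handled symmetrically.

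\textbf{Main obstacle.} The delicate point, and the reason $J_1$-convergence alone is insufficient without extra information, is the control of the value at the data-dependent crossing time, i.e.\ the analog of Lemma \ref{lm012912}. Evaluating $\zeta_N$ at its own crossing time is not a continuous operation, and convergence can fail on the pathological limiting events where $\zeta$ touches $0$ exactly at the crossing or crosses it continuously. Ruling these out is precisely what Corollary 2.2 of \cite{millar}, the result of \cite{lamperti}, and the absolute continuity from \cite{KMR} provide, and all of these are properties of the limit $\zeta$ alone. Consequently the abstract nature of $\zeta_N$ introduces no new difficulty: the entire argument of Appendix \ref{appA} depends on the approximating sequence only through its $J_1$-convergence to $\zeta$, so it transfers without modification. \qed
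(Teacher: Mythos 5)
Your proposal is correct and follows essentially the same route as the paper, which proves Theorem \ref{thmA} precisely by observing that the Appendix \ref{appA} argument for Theorem \ref{thm013112} uses the approximating sequence only through its $J_1$-convergence, while all the delicate ingredients (absolute continuity of the supremum law from \cite{KMR}, the exit properties from \cite{millar} and \cite{lamperti}, quasi-left-continuity, and the strong Markov property) are properties of the limit $\zeta$ alone. Your identification of the evaluation at the data-dependent crossing time as the genuinely non-continuous operation, handled by the analog of Lemma \ref{lm012912}, matches the paper's treatment.
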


\end{document}